\newcommand{\field}[1]{\mathbb{#1}}
\newcommand{\A}{\field{A}}
\newcommand{\C}{\field{C}}
\newcommand{\G}{\field{G}}
\newcommand{\N}{\field{N}}
\newcommand{\R}{\field{R}}
\newcommand{\Z}{\field{Z}}
\theoremstyle{plain}
\newtheorem{theorem}{Theorem}[section]
\newtheorem{proposition}[theorem]{Proposition}
\newtheorem{lemma}[theorem]{Lemma}
\newtheorem{corollary}[theorem]{Corollary}
\newtheorem{definition}[theorem]{Definition}
\newtheorem{remark}[theorem]{Remark}
\theoremstyle{definition}
\theoremstyle{remark}
\begin{document}

\makeatletter	   
\makeatother     

\title{Factorial Rational Varieties which Admit or Fail to Admit an Elliptic $\G_m$-Action}
\author{Gene Freudenburg \and Takanori Nagamine}
\date{\today} 
\subjclass[2010]{14R05, 13A02} 
\keywords{rational variety, affine variety, factorial variety, torus action, unique factorization domain}
\thanks{The work of the second author was supported by a Grant-in-Aid for JSPS Fellows  (No. 18J10420) from the Japan Society for the Promotion of Science}

\pagestyle{plain}  

\begin{abstract} Over a field $k$, we study rational UFDs of finite transcendence degree $n$ over $k$. We classify such UFDs $B$ when $n=2$, $k$ is algebraically closed, and $B$ admits a positive $\Z$-grading, showing in particular that $B$ is affine over $k$. We also consider the Russell cubic threefold over $\C$, and the Asanuma threefolds over a field of positive characterstic, showing that these threefolds admit no elliptic $\G_m$-action. Finally, we show that, if $X$ is an affine $k$-variety and 
\[
X\times\A^m_k\cong_k\A^{n+m}_k
\]
then $X\cong_k\A^n_k$ if and only if $X$ admits an elliptic $\G_m$-action. 
\end{abstract}
\maketitle

\section{Introduction}
In his 1977 paper \cite{Mori.77}, Mori gives a classification of unique factorization domains (UFDs) which are finitely generated over a field $k$ and which admit a positive $\Z$-grading over $k$. Geometrically, these correspond to factorial affine $\G_m$-varieties with elliptic (or good) $\G_m$-actions; see {\it Section\,\ref{degree+grading}}. To each such ring $B$, Mori associates a unique natural number $m$ and a subring $B^{(m)}$ derived from the grading such that $B=B^{(m)}[{\bf v}^{1/{\bf e}}]$, where {\bf v}
and {\bf e} are sequences encoding the ramification data for $B$. The subring $B^{(m)}$ is a UFD defined by a semicomplete polarized $k$-variety $(X,L)$; see {\it Remark\,\ref{polarized}}. 
The algebras $B$ are thus classified by certain semicomplete polarized $k$-varieties $(X,L)$ together with ramification data over the corresponding ring $R(X,L)$. Mori gives an explicit description of all such rings in the case $\dim_kB=2$ and $k$ is algebraically closed. 

Let $\mathcal{U}_k(n)$ denote the set of $k$-isomorphism classes of UFDs containing $k$ and of transcendence degree $n$ over $k$. 
Define the following subsets of $\mathcal{U}_k(n)$, where $B$ indicates a ring represented in the set.
\begin{enumerate}
\item $\mathcal{U}_k(n,{\bf A})$ : $B$ is affine over $k$
\item $\mathcal{U}_k(n,{\bf D})$ : $B$ admits a positive degree function over $k$
\item $\mathcal{U}_k(n,{\bf G})$ : $B$ admits a positive $\Z$-grading over $k$
\item $\mathcal{U}_k(n,{\bf P})$ : $B\subset k^{[m]}$ for some integer $m\ge n$
\item $\mathcal{U}_k(n,{\bf R})$ : $B$ is rational over $k$
\end{enumerate}
Here, $k^{[m]}$ denotes a polynomial ring in $m$ variables over $k$. Of course, there are other categories of interest, such as noetherian, 
regular or unirational UFDs, but the foregoing list is of primary interest for this paper. 

By a result of Eakin (\cite{Eakin.72}, Lemma B), definition (4) is equivalent to:
\begin{center} 
$(4)^{\prime}$  $\mathcal{U}_k(n,{\bf P})$ : $B\subset k^{[n]}$ 
\end{center}
Note the containments $\mathcal{U}_k(n,{\bf G})\subset\mathcal{U}_k(n,{\bf D})$ and $\mathcal{U}_k(n,{\bf P})\subset\mathcal{U}_k(n,{\bf D})$. If $[B]$ denotes the isomorphism class of the ring $B$, then the mapping $[B]\to \big[B^{[1]}\big]$ gives an inclusion $\mathcal{U}_k(n,{\bf *})\subset\mathcal{U}_k(n+1,{\bf *})$ for each of these five properties $({\bf *})$.  
We use the notation $\mathcal{U}_k(n,{\bf A},{\bf R})$ to denote $\mathcal{U}_k(n,{\bf A})\cap\mathcal{U}_k(n,{\bf R})$, etc. 
In this notation, Mori's paper describes $\mathcal{U}_k(n,{\bf A},{\bf G})$. 

For $n=1$ and $k$ algebraically closed, it is known that
\[
\mathcal{U}_k(1,{\bf A})=\Big\{ \big[ k[t]_{f(t)}\big] \,\big\vert\, f(t)\in k[t]\setminus\{ 0\}\Big\} 
\]
where $k[t]\cong k^{[1]}$ and $k[t]_{f(t)}$ denotes localization, and:
\[
 \mathcal{U}_k(1,{\bf D})=\mathcal{U}_k(1,{\bf G})=\mathcal{U}_k(1,{\bf P})=\Big\{ \big[ k^{[1]}\big] \Big\}
\]
See \cite{Freudenburg.17}, Lemma 2.9 and Lemma 2.12, and {\it Corollary\,\ref{dim-1}} below. 

In {\it Section\,\ref{Dim-2}}, we consider the family $\mathcal{B}(k)$ of two-dimensional affine $k$-domains $B$ defined as follows.
\begin{quote}
There exist $n\in\N$, 
pairwise relatively prime integers $a> b> c_1>\hdots > c_n\ge 2$, and distinct $1=\lambda_1,\hdots ,\lambda_n\in k^*$ 
such that:
\begin{equation}\label{type}
B=k[x,y,z_1,\hdots ,z_n]/(x^a+\lambda_iy^b+z_i^{c_i})_{0\le i\le n}
\end{equation}
\end{quote}
If $n=0$, then $B=k[x,y]\cong k^{[2]}$. If $n=1$, these are known as factorial {\bf Pham-Brieskorn surfaces}. The sequence $a,b,c_1,\hdots ,c_n$ is the sequence of {\bf ramification indices} for $B$. 

In \cite{Mori.77}, Theorem 5.1, Mori 
shows that $\mathcal{B}(k)\subset\mathcal{U}_k(2,{\bf A},{\bf G})$, with equality in the case $k$ is algebraically closed.\footnote{More precisely, 
the function $\mathcal{B}(k)\to \mathcal{U}_k(2,{\bf A},{\bf G})$ mapping $B$ to $[B]$ is injective, and when $k$ is algebraically closed, it is also surjective.} Mori's theorem thus shows $\mathcal{U}_k(2,{\bf A},{\bf G})\subset\mathcal{U}_k(2,{\bf R})$ when $k$ is algebraically closed. 

One of our main results is {\it Theorem\,\ref{classify}} below, which gives a complete description of $\mathcal{U}_k(2,{\bf G},{\bf R})$ in the case $k$ is algebraically closed, in particular, showing that
$\mathcal{B}(k)=\mathcal{U}_k(2,{\bf G},{\bf R})$. Consequently, $\mathcal{U}_k(2,{\bf G},{\bf R})\subset\mathcal{U}_k(2,{\bf A})$ in this case. 
Combining this with Mori's result, we conclude that, when $k$ is algebraically closed:
\[
\mathcal{U}_k(2,{\bf A},{\bf G})=\mathcal{U}_k(2,{\bf G},{\bf R})=\mathcal{U}_k(2,{\bf A},{\bf G},{\bf R})=\mathcal{B}(k)
\]
We would like to understand the larger set $\mathcal{U}_k(2,{\bf D},{\bf R})$, starting with the subset 
$\mathcal{U}_k(2,{\bf P})$. Note that, if $B\in\mathcal{U}_k(2,{\bf P})$, then $B$ is affine by Zariski's Theorem \cite{Zariski.54}, and 
$B$ is rational by Castelnuovo's Theorem \cite{Castelnuovo.1894}. Therefore, $\mathcal{U}_k(2,{\bf P})=\mathcal{U}_k(2,{\bf A},{\bf P},{\bf R})$.

One motivation to study $\mathcal{U}_k(n,{\bf P})$ is the fact that, if $A$ is the ring of invariants for a $\G_a$-action on the affine space $\A^{n+1}_k$, then $A\in\mathcal{U}_k(n,{\bf P})$, whereas $A\not\in\mathcal{U}_k(n,{\bf A})$ in general. 
For $n=2$, it known that $A\cong k^{[2]}$ when the characteristic of $k$ is zero (Miyanishi's Theorem \cite{Miyanishi.85}), but it is an open question whether this generalizes to all fields. 
For $n=3$, if $A$ is the ring of invariants for a $\G_a$-action on $\A^4_k$ and the characteristic of $k$ is zero, then 
$A\in\mathcal{U}_k(3,{\bf P},{\bf R})$ (rationality is due to Deveney and Finston \cite{Deveney.Finston.94a}), but it is not known if $A\in\mathcal{U}_k(3,{\bf A})$.
If the $\G_a$-action is homogeneous for a positive $\Z$-grading, then $A\in\mathcal{U}_k(3,{\bf G},{\bf P},{\bf R})$, but even here we do not know if $A$ is affine. 

The main tool in our proof of {\it Theorem\,\ref{classify}} is the theory of signature sequences, which is developed in {\it Section\,\ref{signature}}.
Signature sequences are defined for any pair $(B,\deg )$, where $B$ is an integral $k$-domain and $\deg$ is a non-negative degree function on $B$, but they have especially strong properties when $B$ is a UFD and $\deg$ is positive. 
{\it Section\,\ref{UFD}} introduces certain criteria for a ring to be a UFD, and {\it Section\,\ref{degree+grading}} discusses degree functions and gradings. 

When $X$ is a {\it smooth} affine variety over $\C$, then $X$ is a topological manifold, and the existence of an elliptic $\C^*$-action on $X$ is a strong form of contractibility: In this case, 
the $\C^*$-action has a unique (attractive) fixed point $x_0\in X$. If the action is given by $^{\lambda}x$ ($\lambda\in\C^*$, $x\in X$), then since all the weights of the action are positive integers, restriction to the real interval $t\in (0,1]$ yields:
\[
\lim_{t\to 0^+}(^tx) = x_0 \quad \forall x\in X
\]
So the requisite contracting homotopy is given by $F:X\times [0,1]\to X$, where
\[
F(x,t)=\begin{cases} ^tx\quad (t\ne 0) \cr x_0 \quad (t=0) \end{cases}
\]
A well-known theorem of Ramanujam \cite{Ramanujam.71} says that a smooth affine surface over $\C$ which is contractible and simply connected at infinity is isomorphic to $\C^2$. This can be used to show that any smooth affine surface over $\C$ with an elliptic $\C^*$-action is isomorphic to $\C^2$; see \cite{Flenner.Zaidenberg.06}. 

In the same paper, Ramanujam showed that any smooth contractible affine variety over $\C$ of dimension $n\ge 3$ is diffeomorphic to $\R^{2n}$, and is therefore either isomorphic to $\C^n$ or an exotic structure on $\C^n$. 
A well-known example of this phenomenon is the Russell cubic threefold $X$, which is discussed in {\it Section\,\ref{Dim-3}}. 
For the coordinate ring $B$ of $X$, it is known that $B\in\mathcal{U}_{\C}(3,{\bf A},{\bf R})$, that $X$ is smooth and contractible, and that $X\not\cong\C^3$. 
So $X$ is an exotic structure on $\C^3$. In {\it Theorem\,\ref{Russell}},
we show that $X$ does not have the stronger form of contractibility imposed by an elliptic $\C^*$-action, i.e., $B\not\in\mathcal{U}_{\C}(3,{\bf G})$. 

Similarly, we consider the Asanuma threefolds over a field of positive characteristic, showing that these also do not admit an elliptic $\G_m$-action ({\it Corollary\,\ref{Asanuma}}). 
This result is a consequence of {\it Theorem\,\ref{cancel}}, which highlights the role of elliptic $\G_m$-actions:
\begin{quote}
For any field $k$ and positive integers $n,m$, let $X$ be an affine $k$-variety such that $X\times\A^m_k\cong_k\A^{n+m}_k$. Then $X\cong\A^n_k$ if and only if $X$ admits an elliptic $\G_m$-action.
\end{quote}
In one direction, the condition $X\times\C^m\cong\C^{n+m}$ ensures that $X$ is smooth, affine and contractible, but does not imply the stronger condition $\C [X]\in\mathcal{U}_{\C}(n,{\bf A},{\bf G})$. 
In the other direction, if $B\in\mathcal{U}_{\C}(n,{\bf A},{\bf G})$ and $X={\rm Spec}(B)$ is smooth, then either $X\cong\C^n$ or $X$ is an exotic structure on $\C^n$. 
In {\it Section\,\ref{conjecture}}, we conjecture the following characterization of affine space:
\begin{quote}
Let $k$ be an algebraically closed field, and let $X$ be a factorial rational affine $k$-variety of dimension $n$. If $X$ is smooth and admits an elliptic $\G_m$-action, then $X\cong_k\A^n_k$.
\end{quote}
The conjecture is true for $n=1$ and $n=2$. 
\medskip

\noindent {\bf Preliminaries.} For the integral domain $B$ and integer $n\ge 0$, $B^*$ is the group of units of $B$ and $B^{[n]}$ is the polynomial ring in $n$ variables over $B$. If $K$ is a field, then $K^{(n)}$ denotes the field of fractions of $K^{[n]}$. 
For a ground field $k$, affine space $n$-space over $k$ is denoted by $\A^n_k$, $\G_a$ is the additive group of $k$, and $\G_m$ the multiplicative group of $k^*$. 
If $B$ is a $k$-algebra, the {\bf Makar-Limanov invariant} $ML(B)$ of $B$ is the intersection of all invariant rings of $\G_a$-actions on $B$, 
and the {\bf Derksen invariant} $\mathcal{D}(B)$ of $B$ is the subring generated by invariants of non-trivial $\G_a$-actions. 
$B$ is {\bf rigid} if $ML(B)=B$, and {\bf stably rigid} if $ML(B^{[n]})=B$ for every $n\ge 0$. See \cite{Freudenburg.17} for details. 


\section{Criteria for a Ring to be a UFD}\label{UFD}

Let $A$ be an integral domain. It is well-known that, if $A$ is a UFD, then every localization of $A$ is a UFD. 
A partial converse is given by Nagata in \cite{Nagata.57}, Lemma 2. 
\begin{theorem} {\bf (Nagata's Criterion)} Let $A$ be a noetherian integral domain and $S\subset A\setminus\{ 0\}$ a multiplicatively closed set generated by a set of prime elements of $A$. Then $A$ is a UFD if and only if $S^{-1}A$ is a UFD.
\end{theorem}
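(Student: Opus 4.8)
The forward implication is the classical fact recalled immediately above: every localization of a UFD is again a UFD. So the plan is to prove the converse, assuming that $S^{-1}A$ is a UFD and that $S$ is generated by prime elements of $A$. Since $A$ is noetherian, I would invoke the standard criterion that a noetherian domain is a UFD precisely when every height-one prime ideal is principal; it then suffices to show that an arbitrary height-one prime $\pgoth\subset A$ is principal. I would split the argument according to whether $\pgoth$ meets $S$.

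If $\pgoth\cap S\neq\emptyset$, then $\pgoth$ contains some $s\in S$, and since $S$ is generated by primes, $s$ is a product of prime elements of $A$; as $\pgoth$ is prime, it contains one such prime factor $p$. Then $(p)\subseteq\pgoth$ is a containment of nonzero primes, so $\mathrm{ht}(\pgoth)=1$ forces $\pgoth=(p)$, which is principal.

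The substantive case is $\pgoth\cap S=\emptyset$. Here $S^{-1}\pgoth$ is a height-one prime of the UFD $S^{-1}A$, hence principal; clearing the denominator of a generator produces $a\in\pgoth$ with $a/1$ generating $S^{-1}\pgoth$. The key maneuver is to arrange that no prime divisor of $a$ lies in $S$: whenever a generating prime $p\in S$ divides $a$, write $a=pa'$; since $p\notin\pgoth$ but $pa'\in\pgoth$, primeness gives $a'\in\pgoth$, and $a'/1$ still generates $S^{-1}\pgoth$ because $p$ is a unit in $S^{-1}A$. Noetherianity, via the ascending chain condition on principal ideals, guarantees this stripping terminates, yielding $a\in\pgoth$ with $a/1$ generating $S^{-1}\pgoth$ and no $S$-prime dividing $a$. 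I would then verify $\pgoth=(a)$: given $b\in\pgoth$, the relation $b/1\in(a/1)$ in $S^{-1}A$ yields $sb=ac$ in $A$ for some $s\in S$ and $c\in A$; factoring $s$ into its generating primes and cancelling them one at a time — each divides $ac$ but not $a$, hence divides the cofactor — shows $s\mid c$, and after cancelling $s$ in the domain $A$ we obtain $b\in(a)$.

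The main obstacle is precisely this descent in the second case: the generator of the localized ideal $S^{-1}\pgoth$ lives in the ring of fractions, and one must manufacture from it a genuine generator of $\pgoth$ inside $A$. This is where both hypotheses become indispensable — the prime factorization of elements of $S$ lets one peel off, and later cancel, the $S$-part of a candidate generator, while noetherianity ensures the peeling process halts. Once every height-one prime is shown to be principal, the cited criterion delivers that $A$ is a UFD.
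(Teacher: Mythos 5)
Your argument is correct and complete. Note, however, that the paper does not prove this statement at all: it records Nagata's Criterion as a known result and refers to Nagata's 1957 paper (Lemma 2) for the proof, so there is no in-paper argument to compare against. What you have written is essentially the standard textbook proof: reduce to showing every height-one prime $\mathfrak{p}$ of the noetherian domain $A$ is principal, handle $\mathfrak{p}\cap S\neq\emptyset$ by extracting a prime generator of $S$ lying in $\mathfrak{p}$, and in the case $\mathfrak{p}\cap S=\emptyset$ descend a generator of $S^{-1}\mathfrak{p}$ to a generator of $\mathfrak{p}$ by stripping off $S$-prime factors (terminating by the ascending chain condition) and then cancelling the elements of $S$ one prime at a time in the relation $sb=ac$. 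All the steps check out, including the strictness of the chain $(a)\subsetneq(a')$ used to terminate the stripping, so the proof stands on its own.
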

The main purpose of this section is to introduce two additional criteria for a ring to be a UFD. 

\subsection{Integral Extensions}
The following result generalizes Samuel \cite{Samuel.64}, Theorem 8.1.
\begin{theorem}\label{Samuel} Let $A=\bigoplus_{i\in\Z}A_i$ be a $\Z$-graded integral domain which is finitely generated as an $A_0$-algebra, and let
$F\in A_{\omega} \setminus \{ 0 \}$, $\omega\in\Z$. Define $B=A[Z]/(Z^c-F)$, where $A[Z]=A^{[1]}$, $c\in\N$ and $\gcd(c,\omega )=1$. 
\begin{itemize}
\item [{\bf (a)}] $B$ is an integral domain and ${\rm frac}(A)\cong_K{\rm frac}(B)$, where $K={\rm frac}(A_0)$.
\item [{\bf (b)}] If $F$ is prime in $A$, then $z$ is prime in $B$, where $z=\pi (Z)$ for the surjection $\pi :A[Z]\to B$.
\item [{\bf (c)}] If $A$ is noetherian and $F$ is prime in $A$, then $A$ is a UFD if and only if $B$ is a UFD.
\end{itemize}
\end{theorem}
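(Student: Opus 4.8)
The plan is to exploit the $\Z$-grading on $B$ obtained by rescaling: give $A_i$ the degree $ci$ and set $\deg z=\omega$, so that $z^c=F$ becomes homogeneous and $B=\bigoplus_jB_j$ with each $B_j$ spanned by the $A_iz^m$ having $ci+m\omega=j$ and $0\le m<c$. The hypothesis $\gcd(c,\omega)=1$ enters through one elementary but decisive observation: the residues $m\omega\bmod c$ for $m=0,\dots,c-1$ are pairwise distinct, while every homogeneous element of $A$ has degree in $c\Z$. Consequently a homogeneous element of $B$, or of any homogeneous localization of $B$, can involve only a single power of $z$; that is, homogeneous elements are monomials $az^m$ with $a\in A$ homogeneous. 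This ``monomial rigidity'' is the engine behind all three parts.

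For (a), I first note that $B$ is a free $A$-module with basis $1,z,\dots,z^{c-1}$, so $B$ injects into its homogeneous localization $T^{-1}B$, where $T$ is the set of nonzero homogeneous elements. Writing $S$ for the nonzero homogeneous elements of $A$, monomial rigidity identifies $T^{-1}B=(S^{-1}A)[z]$; since $z^{-1}=z^{c-1}F^{-1}$, every nonzero homogeneous element $az^m$ is a unit, so $T^{-1}B$ is a graded field, hence a domain, whence $B$ is a domain. (Equivalently, the grading forbids $F$ from being a $p$th power for any prime $p\mid c$, or from lying in $-4({\rm frac}(A))^4$ when $4\mid c$, so $Z^c-F$ is irreducible over ${\rm frac}(A)$ by the binomial criterion.) For the field isomorphism, monomial rigidity computes the degree-zero subfields: a degree-zero monomial $az^m$ forces $c\mid m$, hence $m=0$, giving $(T^{-1}B)_0=(S^{-1}A)_0=:F_0$. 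A $\Z$-graded field is a Laurent extension of its degree-zero part, so ${\rm frac}(A)={\rm frac}(S^{-1}A)\cong F_0^{(1)}\cong{\rm frac}(T^{-1}B)={\rm frac}(B)$, an isomorphism fixing $K\subseteq F_0$.

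Part (b) is a direct quotient computation: $B/zB=A[Z]/(Z,\,Z^c-F)=A/FA$, which is a domain exactly when $F$ is prime, and $z$ is a nonzero nonunit; hence $z$ is prime. For (c), first observe that $B$ is module-finite over the Noetherian ring $A$, hence Noetherian. The strategy is to apply Nagata's Criterion on both sides: since $F$ is prime in $A$, it gives that $A$ is a UFD iff $A[1/F]$ is a UFD, and since $z$ is prime in $B$ by (b), it gives that $B$ is a UFD iff $B[1/z]$ is a UFD. Because $z^c=F$, inverting $z$ inverts $F$, so $A[1/F]\subseteq B[1/z]$ is a module-finite extension of degree $c$ in which $F$ is a unit, and the claim reduces to showing $A[1/F]$ is a UFD iff $B[1/z]$ is. Here the grading returns: both rings are $\Z$-graded with the same grading support $d\Z$ (where $d$ generates the degrees of $A$, and $d\mid\omega$ since $F\in A_\omega$), and monomial rigidity again yields a common degree-zero subring $(B[1/z])_0=(A[1/F])_0$. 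I would then invoke the graded factoriality criterion — a Noetherian graded domain is a UFD iff every homogeneous height-one prime is principal — and transport homogeneous height-one primes across the extension.

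The main obstacle is precisely this transport. I must control how homogeneous height-one primes split in the degree-$c$ extension $A[1/F]\subseteq B[1/z]$, and the argument has to be characteristic-free: in particular it must cover the case ${\rm char}(k)\mid c$, where the cover $z^c=F$ is inseparable and the classical étale-descent picture is unavailable. This is exactly why I would lean on the monomial rigidity rather than on smoothness of the cover, and it is the point at which the coprimality $\gcd(c,\omega)=1$ is genuinely used: it is what forces $A[1/F]$ and $B[1/z]$ to share their homogeneous divisor-class data, and hence their factoriality, and it is why the hypothesis cannot be dropped.
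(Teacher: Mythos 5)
Your parts (a) and (b) are essentially sound. The ``monomial rigidity'' observation --- that $\gcd(c,\omega)=1$ forces every homogeneous element of $B$ (for the rescaled grading) to be a monomial $az^m$ with $a\in A$ homogeneous and $0\le m<c$ --- together with the freeness of $B$ over $A$ shows that nonzero homogeneous elements multiply to nonzero homogeneous elements, hence that $B$ is a domain; but state it in that order, since you cannot first invert $T$ and then deduce injectivity of $B\to T^{-1}B$ (that injectivity is exactly what needs the domain property). The identification $(T^{-1}B)_0=(S^{-1}A)_0$ and the Laurent structure of nontrivially graded fields then give ${\rm frac}(B)\cong_K{\rm frac}(A)$. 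This is a legitimately different route from the paper's, which instead picks $j,d$ with $jc=d\omega\pm1$ and uses the degree-twist automorphism $x_i\mapsto Z^{-d\omega_i}x_i$ of $A[Z,Z^{-1}]$ to rewrite $Z^{jc}-F$ as $Z^{d\omega}(Z^{\pm1}-\tilde F)$.

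The gap is in (c), and you have located it yourself: after the two applications of Nagata's criterion, everything reduces to showing that the degree-$c$ module-finite extension $A[1/F]\subseteq B[1/z]$ preserves factoriality in both directions, and you do not prove this. ``Transporting homogeneous height-one primes'' across a finite cover is not a routine step --- factoriality is not preserved by cyclic covers in general (compare $k[x,y]\subset k[x,y][z]/(z^2-xy)$) --- and the equality $(B[1/z])_0=(A[1/F])_0$ is not by itself enough, because $B[1/z]$ need not be a Laurent extension of its degree-zero part: its homogeneous units (powers of $z$ times homogeneous units of $A[1/F]$) need not generate its grading group, so the standard divisor-class-group comparison does not apply directly. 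The paper's proof sidesteps the issue entirely: after the twist, $B[z^{-1}]=A'[\tilde F,\tilde F^{-1}]$ is literally a localization of an isomorphic copy $A'$ of $A$ at the prime element $\tilde F$ --- not a finite extension of such a localization --- so Nagata's criterion applied on both sides ($\tilde F$ prime in $A'$, $z$ prime in $B$, and $t$ prime in the intermediate ring $R$ in the general case) closes the loop with no transport of divisor classes at all. To repair your argument you would either have to prove the transport statement for these coprime cyclic covers (essentially reproving Mori--Samuel descent in the graded setting), or replace the finite extension by the paper's localization isomorphism.
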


\begin{proof} 
Let $x_1,\hdots ,x_n\in A$ be such that $A=A_0[x_1,\hdots ,x_n]$, and let $F=P(x_1,\hdots ,x_n)$ for $P\in A_0^{[n]}$. 
Set $\omega_i=\deg x_i$, $1\le i\le n$. 

Consider first the case $c=d\omega\pm 1$ for some $d\in\N$. Define an $A_0$-automorphism $\varphi$ of $A[Z,Z^{-1}]$ by
\[
x_i^{\prime}=\varphi (x_i)=Z^{-d\omega_i}x_i\,\, (1\le i\le n) \quad {\rm and}\quad \varphi (Z)=Z
\]
and define $\tilde{F}= \varphi (F)=P(x_1^{\prime},\hdots ,x_n^{\prime})$. 
Set $A'=\varphi (A)=A_0[x_1^{\prime},\hdots ,x_n^{\prime}]$.
We have:
\begin{eqnarray*}
Z^c-F &=& Z^c-P(Z^{d\omega_1}x_1^{\prime},\hdots ,Z^{d\omega_n}x_n^{\prime}) \\
&=& Z^c-Z^{d\omega}P(x_1^{\prime},\hdots ,x_n^{\prime}) \\
&=& Z^{d\omega}(Z^{\pm 1}-\tilde{F}) 
\end{eqnarray*}
Therefore:
\[
B[z^{-1}]=A[Z,Z^{-1}]/(Z^c-F)=A'[Z,Z^{-1}]/(Z^{\pm 1}-\tilde{F})=A'[\tilde{F},\tilde{F}^{-1}]
\]
It follows that $B$ is an integral domain and ${\rm frac}(B)={\rm frac}(A')\cong_K{\rm frac}(A)$. So statement (a) holds in this case. 

In general, there exist $j,d\in\Z$ with $j\ge 0$ such that $jc=d\omega\pm 1$. Consider the ring 
\[
R:=A[T]/(T^{jc}-F)=A[t]
\]
where $A[T]=A^{[1]}$ and $t=p(T)$ for the canonical surjection $p:A[T]\to R$. 
By what was shown above, $R$ is an integral domain and ${\rm frac}(A)\cong_K{\rm frac}(R)$. 

For the subring $A[t^j]\subset R$ we have:
\[
A[t^j]=A[Z]/(Z^c-F)=B=A[z] \implies R=B[S]/(S^j-z)=B[t]
\]
where $B[S]=B^{[1]}$. Therefore, $B$ is an integral domain. Let $\psi$ be an $A$-automorphism of the localization $B[S,S^{-1}]$ defined by:
\[
\tilde{z}=\psi (z)=S^{1-j}z \quad {\rm and}\quad \psi (S)=S
\]
Set $B'=\psi (B)=A[\tilde{z}]$. Then $B'\cong_{A_0}B$ and $B[S,S^{-1}]=B'[S,S^{-1}]$. In addition:
\[
S^j-z=S^j-S^{j-1}\tilde{z}=S^{j-1}(S-\tilde{z})
\]
Therefore:
\[
R[t^{-1}]=B[t,t^{-1}]=B[S,S^{-1}]/(S^j-z)=B'[S,S^{-1}]/(S-\tilde{z})=B'[\tilde{z},\tilde{z}^{-1}]
\]
Consequently, ${\rm frac}(A)\cong_K{\rm frac}(R)={\rm frac}(B')\cong_K{\rm frac}(B)$.
This completes the proof for part (a). 

For part (b), assume that $F$ is prime in $A$. Since $B/zB\cong A/FA$, $z$ is prime in $B$.

For part (c), assume that $A$ is noetherian and $F$ is prime in $A$. Since $z$ is a prime element of $B$, $\tilde{z}$ is a prime element of $B'$. 

Consider first the case $c=d\omega\pm 1$ for some $d\in\N$.
If $A$ is a UFD, then $A'$ is a UFD, as is $A'[\tilde{F},\tilde{F}^{-1}]=B[z^{-1}]$. Since $z\in B$ is prime, it follows by Nagata's criterion that $B$ is a UFD.
Conversely, assume that $B$ is a UFD. Then the localization $B[z^{-1}]=A'[\tilde{F},\tilde{F}^{-1}]$ is a UFD. Since $\tilde{F}$ is prime in $A'$, it follows by Nagata's criterion that $A'$, hence $A$, is a UFD. 
So statement (c) holds in this case. 

In general, assume $j,d\in\Z$, $j\ge 0$, are such that $jc=d\omega\pm 1$. For the ring $R$ as above, we have shown that $t$ is prime in $R$, and that $R$ is a UFD if and only if $A$ is a UFD. 

If $B$ is a UFD, then $B'$ is a UFD, as is $B'[\tilde{z},\tilde{z}^{-1}]=R[t^{-1}]$. Since $t\in R$ is prime, it follows by Nagata's criterion that $R$ is a UFD.

Conversely, assume that $R$ is a UFD. Then the localization $R[t^{-1}]=B'[\tilde{z},\tilde{z}^{-1}]$ is a UFD. Since $\tilde{z}$ is prime in $B'$, it follows by Nagata's criterion that $B'$, hence $B$, is a UFD. 

We have thus shown: $A$ is a UFD if and only if $R$ is a UFD if and only if $B$ is a UFD. So statement (c) is true in the general case. 
\end{proof}

Note that, although ${\rm frac}(A)\cong_K{\rm frac}(B)$ in the theorem above, the inclusion $A\subset B$ is not birational if $c\ge 2$.

Let $\mathfrak{g}$ be the $\Z$-grading of $A$ in {\it Theorem\,\ref{Samuel}}.
Extend the $\Z$-grading $c\,\mathfrak{g}$ of $A$ to a $\Z$-grading $\mathfrak{g}^{\prime}$ of $A[Z]$ by letting $Z$ be homogeneous and:
\[
\deg_{\mathfrak{g}^{\prime}}Z=\deg_{\mathfrak{g}}F=\omega
\]
Then $Z^c-F$ is homogeneous and the quotient $B$ has the $\Z$-grading induced by $\mathfrak{g}^{\prime}$. 

\subsection{Affine Modifications of UFDs} If $A$ is an integral domain, $I\subset A$ is an ideal, and $f\in I$ is nonzero, then the {\bf affine modification} of $A$ along $f$ with center $I$ is 
the subring of the localization $A_f$ defined by:
\[
A[f^{-1}I]=A[a/f\,\vert\, a\in I]
\]
The reader is referred to \cite{Kaliman.Zaidenberg.99} for the theory of affine modifications.

The following result generalizes Nagata \cite{Nagata.57}, Theorem 1.

\begin{theorem}\label{affine-mod} Let $A$ be a noetherian UFD, $I\subset A$ an ideal, and $f\in I$. Assume that there exist $a_1,\cdots a_n\in A$ such that:
\begin{enumerate}
\item $I=(f,a_1,\hdots ,a_n)$
\item $\gcd(f,a_1\cdots a_n)=1$ 
\item $(p,a_1,\hdots ,a_n)$ is a prime ideal of $A$ for every prime divisor $p\in A$ of $f$
\end{enumerate}
Then $A[f^{-1}I]$ is a UFD. Moreover, any $\Z$-grading of $A$ for which $f,a_1,\hdots ,a_n$ are homogeneous extends to a $\Z$-grading of $A[f^{-1}I]$.
\end{theorem}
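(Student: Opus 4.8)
The plan is to apply Nagata's Criterion to $B=A[f^{-1}I]$. Write $u_i=a_i/f\in A_f$, so that $B=A[u_1,\dots,u_n]$ is finitely generated over the noetherian ring $A$, hence noetherian, and is an integral domain since $B\subseteq A_f\subseteq{\rm frac}(A)$. Because each $u_i$ already lies in $A_f$, localizing at $f$ gives $B_f=A_f$, which is a UFD. If $p_1,\dots,p_r$ are the distinct prime divisors of $f$ in $A$, then the multiplicative set $S$ generated by $p_1,\dots,p_r$ satisfies $S^{-1}B=B_f=A_f$. Thus, by Nagata's Criterion, it suffices to prove that each $p_j$ remains a prime element of $B$; the grading assertion will follow separately.

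Fix a prime divisor $p$ of $f$ and set $\pgoth=(p,a_1,\dots,a_n)$, which is prime in $A$ by hypothesis (3). Since $p\mid f$ we have $a_i=fu_i\in pB$, so $\pgoth B=pB$ and $B/pB=(A/\pgoth)[\bar u_1,\dots,\bar u_n]$, where $A/\pgoth$ is a nonzero domain. I first record the mechanism for $n=1$: here the surjection $A[Y]\to B$, $Y\mapsto u_1$, has kernel $(fY-a_1)$, using $\gcd(f,a_1)=1$ to identify it, so reducing modulo $p$ turns the relation $fY-a_1$ into $-\bar a_1$; since $p\nmid a_1$ forces $\bar a_1\ne 0$ in $A/pA$, we obtain $B/pB=(A/pA)[Y]/(\bar a_1)=(A/\pgoth)[Y]\cong(A/\pgoth)^{[1]}$, an integral domain. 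Hence $p$ is prime in $B$, which recovers the case treated by Nagata.

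For general $n$, I would reduce primality of $p$ to the statement that $B/pB$ is an integral domain, and prove the latter by a leading-form argument. Filter $B$ by $F_dB=f^{-d}I^d$; this is an exhaustive, separated algebra filtration with $F_0B=A$ and $B=\bigcup_d F_dB$. Passing to the images $\overline{F_dB}$ in $B/pB$ produces the associated graded ring ${\rm gr}\,(B/pB)=\bigoplus_d \overline{F_dB}/\overline{F_{d-1}B}$, generated in degree one over $A/\pgoth$ by the classes of the $\bar u_i$. Since a ring whose associated graded for such a filtration is a domain must itself be a domain, the whole problem comes down to showing that ${\rm gr}\,(B/pB)$ is a domain. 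Concretely this graded ring is the special fiber ring of $I$ over $\pgoth$: its defining relations are the leading forms, reduced modulo $\pgoth$, of the relations among the $u_i$, namely the reductions of the affine-linear relations $\sum_j r_jY_j-s$ arising from syzygies $\sum_j r_ja_j=sf$ of the generators of $I$.

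The main obstacle is exactly this last step: verifying that the fiber cone ${\rm gr}\,(B/pB)$ is an integral domain, equivalently that the exceptional fiber of the blow-up of $I$ over the integral base ${\rm Spec}(A/\pgoth)$ is integral. This is where hypotheses (2) and (3) are essential: (3) guarantees that the base $A/\pgoth$ is a domain, while (2), which yields $p\nmid a_i$ for every $i$, guarantees that no $\bar u_i$ degenerates and that the leading forms behave well modulo $\pgoth$. Once ${\rm gr}\,(B/pB)$ is known to be a domain, $B/pB$ is a domain, each $p_j$ is prime in $B$, and Nagata's Criterion yields that $B$ is a UFD. Finally, the grading statement is immediate: any $\Z$-grading of $A$ making $f,a_1,\dots,a_n$ homogeneous extends canonically to $A_f$ with $\deg f^{-1}=-\deg f$, under which each $u_i=a_i/f$ is homogeneous; hence $B=A[u_1,\dots,u_n]$, being generated by homogeneous elements over the graded ring $A$, is a graded subring of $A_f$, giving the desired $\Z$-grading of $A[f^{-1}I]$.
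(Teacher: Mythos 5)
Your framing via Nagata's criterion is the right one, your $n=1$ argument is complete and correct (it is essentially Nagata's original proof and the paper's), and the grading assertion is handled properly. But for $n\ge 2$ the proof stops exactly where all the content lies: you reduce primality of each $p\mid f$ in $B$ to the claim that the associated graded ring $\mathrm{gr}(B/pB)$ --- the fiber cone of $I$ over $A/\mathfrak{p}$ with $\mathfrak{p}=(p,a_1,\dots,a_n)$ --- is an integral domain, and then you declare this ``the main obstacle'' without proving it. Saying that hypotheses (2) and (3) ``are essential'' at that point is not an argument: those hypotheses must actually be converted into control over the defining relations of the fiber cone, that is, over \emph{all} syzygies $\sum_j r_ja_j=sf$ among the generators of $I$, not just the trivial ones $f\cdot a_j-a_j\cdot f=0$. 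As written, the proposal establishes the theorem only for $n=1$.

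For comparison, the paper closes this gap by asserting outright that the kernel of $A[Z_1,\dots,Z_n]\to B$, $Z_i\mapsto a_i/f$, equals $(fZ_1-a_1,\dots,fZ_n-a_n)$, from which $B/pB\cong \big(A/(p,a_1,\dots,a_n)\big)[Z_1,\dots,Z_n]$ is a domain by hypothesis (3); granting that presentation, your fiber cone becomes a polynomial ring over $A/\mathfrak{p}$ and your argument closes at once. You should be aware, however, that for $n\ge 2$ this presentation claim is itself delicate, so your worry about higher syzygies is well founded: take $A=k[x,y]$, $f=x$, $a_1=y$, $a_2=x+y$ (all three hypotheses hold). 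Then $Z_2-Z_1-1$ is in the kernel of the evaluation map, and $x(Z_2-Z_1-1)\in(xZ_1-y,\,xZ_2-x-y)$, but reducing modulo $x$ shows $Z_2-Z_1-1$ itself is not in that ideal; so the two linear relations do not generate the kernel and their quotient is not even a domain. Any complete proof --- yours or the paper's --- must genuinely account for these extra relations. (In the paper's sole application of this theorem one has $n=1$, where both arguments are sound.)
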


\begin{proof} Let $B=A[f^{-1}I]= A[a_1/f,\hdots ,a_n/f]$ and $A[Z_1,\hdots ,Z_n]=A^{[n]}$. Since $\gcd (f,a_i)=1$ for each $i$, the ring 
\[
A[Z_1,\hdots ,Z_n]/(fZ_1-a_1,\hdots ,fZ_n-a_n)
\]
is an integral domain isomorphic to $B$.
Let $p\in A$ be a prime divisor of $f$. Then:
\begin{eqnarray*}
B/pB&\cong&A[Z_1,\hdots ,Z_n]/(fZ_1-a_1,\hdots ,fZ_n-a_n,p) \\
&\cong& A/(a_1\hdots ,a_n,p)[Z_1,\hdots ,Z_n] \\
&\cong& A/(a_1,\hdots ,a_n,p)^{[n]}
\end{eqnarray*}
Since $a_1A+\cdots +a_nA+pA$ is a prime ideal of $A$, $pB$ is a prime ideal of $B$. 

Let $S\subset A$ be the multiplicatively closed set generated by the prime divisors of $f$. We have:
\[
B=A[a_1/f,\hdots ,a_n/f]\subset S^{-1}A=S^{-1}B
\]
Since $A$ is a UFD, $S^{-1}A=S^{-1}B$ is a UFD. By Nagata' criterion, $B$ is a UFD. 

Assume that $A$ has $\Z$-grading $\mathfrak{g}$. Extend $\mathfrak{g}$ to a $\Z$-grading $\mathfrak{g}^{\prime}$ of $A[Z_1,\hdots ,Z_n]$ by letting $Z_i$ be homogeneous 
with:
\[
\deg_{\mathfrak{g}^{\prime}}Z_i=\deg_{\mathfrak{g}}a_i-\deg_{\mathfrak{g}}f
\]
Then $(fZ_1-a_1,\hdots ,fZ_n-a_n)$ is a homogeneous ideal, and the quotient $A[f^{-1}I]$ has the $\Z$-grading induced by $\mathfrak{g}^{\prime}$. 
\end{proof} 

\subsection{An Application}

The following lemma generalizes Lemma 2 in \cite{Daigle.Freudenburg.01a}. 

\begin{lemma}\label{prime} Let $A_0$ be an integral domain. Given the integer $n\ge 0$, let 
\[
R_n=A_0[z_0,\hdots ,z_n]\cong A_0^{[n+1]}
\]
and let $a_1,\hdots ,a_n,b_1,\hdots ,b_n$ be positive integers such that 
$\gcd (a_i,b_1\cdots b_i)=1$ for each $i$. The ideal
\[
I_n=(z_1^{a_1}+z_0^{b_1},\hdots , z_n^{a_n}+z_{n-1}^{b_n})
\]
is a prime ideal of $R_n$. 
\end{lemma}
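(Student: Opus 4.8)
The plan is to prove this by induction on $n$, using {\it Theorem\,\ref{Samuel}} (or more precisely its underlying technique) as the engine for each inductive step. The base case $n=0$ is trivial, since $I_0$ is the zero ideal of $R_0=A_0[z_0]$, which is prime because $A_0$ is a domain. For the inductive step, I would assume that $I_{n-1}$ is prime in $R_{n-1}=A_0[z_0,\hdots ,z_{n-1}]$ and aim to deduce that $I_n$ is prime in $R_n$.

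The key structural observation is that $R_n/I_n$ can be built from $R_{n-1}/I_{n-1}$ by adjoining one new variable $z_n$ subject to the single relation $z_n^{a_n}+z_{n-1}^{b_n}=0$. Concretely, I would set $A=R_{n-1}/I_{n-1}$, which is a domain by the inductive hypothesis, and observe that
\[
R_n/I_n\cong A[z_n]/(z_n^{a_n}+\bar z_{n-1}^{\,b_n}),
\]
where $\bar z_{n-1}$ is the image of $z_{n-1}$ in $A$. To show $I_n$ is prime, it suffices to show that this quotient is a domain, i.e. that $z_n^{a_n}+\bar z_{n-1}^{\,b_n}$ generates a prime ideal in $A[z_n]=A^{[1]}$. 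This is exactly the setting of {\it Theorem\,\ref{Samuel}} with $F=-\bar z_{n-1}^{\,b_n}$, $c=a_n$, and the relevant grading on $A$; the hypothesis $\gcd(a_i,b_1\cdots b_i)=1$ is what supplies the coprimality condition $\gcd(c,\omega)=1$ needed to apply that theorem, once the grading is set up so that $\deg z_i$ tracks the exponents correctly.

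The main obstacle, and the place where care is required, is verifying the two hypotheses of {\it Theorem\,\ref{Samuel}}(b): that $A=R_{n-1}/I_{n-1}$ carries a $\Z$-grading under which $F=-\bar z_{n-1}^{\,b_n}$ is a nonzero homogeneous element $F\in A_\omega$, and that $F$ is \emph{prime} in $A$. For the grading, I would assign weights to $z_0,\hdots,z_{n-1}$ compatibly with the defining relations $z_i^{a_i}+z_{i-1}^{b_i}$ (each such binomial must be homogeneous), which forces a system of weight equations; the coprimality assumptions guarantee a consistent positive solution and, crucially, that $\gcd(a_n,\omega)=1$ where $\omega=\deg F=b_n\deg z_{n-1}$. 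Establishing that $\bar z_{n-1}$ is prime in $A$ is the delicate point: I would argue that $z_{n-1}$ is prime in $R_n/I_n$ by showing the quotient $A/\bar z_{n-1}A$ is again a domain, which amounts to setting $z_{n-1}=0$ and checking that the resulting relations collapse the remaining variables in a controlled way, reducing once more to an instance already handled by the inductive structure.

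Finally, with primality of $F$ in hand, {\it Theorem\,\ref{Samuel}}(b) yields directly that the image of $z_n$ is prime in $R_n/I_n$ and, more to the point, that $R_n/I_n$ is an integral domain, which is precisely the assertion that $I_n$ is prime in $R_n$. I would expect the cleanest write-up to isolate the grading construction and the primality of $\bar z_{n-1}$ as the two technical lemmas feeding the induction, with {\it Theorem\,\ref{Samuel}} invoked as a black box to close each step; the bookkeeping on weights is routine but must be done explicitly to confirm $\gcd(c,\omega)=1$ at every stage.
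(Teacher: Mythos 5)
Your inductive skeleton is exactly the paper's: identify $R_n/I_n$ with $A[Z]/(Z^{a_n}-F)$ where $A=R_{n-1}/I_{n-1}$ and $F$ is the image of $z_{n-1}^{b_n}$, equip $A$ with a $\Z$-grading making the defining binomials homogeneous, and invoke {\it Theorem\,\ref{Samuel}}. The grading the paper uses is $\deg z_i=b_1\cdots b_i\,a_{i+1}\cdots a_{n-1}$, which makes each $z_i^{a_i}+z_{i-1}^{b_i}$ homogeneous and gives $\deg F=b_1\cdots b_n$, coprime to $a_n$ by hypothesis — so your expectation that the weight bookkeeping works out is correct.

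However, there is a genuine misstep in your plan: you propose to verify the hypotheses of {\it Theorem\,\ref{Samuel}}(b), in particular that $F=-\bar z_{n-1}^{\,b_n}$ is \emph{prime} in $A$. This cannot be done: $F$ is a $b_n$-th power (up to sign), so it is not even irreducible when $b_n\ge 2$, regardless of whether $\bar z_{n-1}$ itself is prime. The "delicate point" you isolate — primality of $\bar z_{n-1}$ — would not rescue this, since primality of an element does not pass to its proper powers. Fortunately the hypothesis is also unnecessary: part (a) of {\it Theorem\,\ref{Samuel}} asserts unconditionally that $B=A[Z]/(Z^c-F)$ is an integral domain whenever $A$ is a $\Z$-graded domain finitely generated over $A_0$, $F$ is a nonzero homogeneous element of degree $\omega$, and $\gcd(c,\omega)=1$; primality of $F$ enters only in parts (b) and (c), which concern the stronger conclusions that $z$ is prime in $B$ and that the UFD property transfers. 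Since the lemma only needs $R_n/I_n$ to be a domain, the paper closes the induction by citing part (a) alone. Replacing your appeal to part (b) with part (a) removes the false obligation and makes your argument identical to the paper's.
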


\begin{proof} We proceed by induction on $n$, the case $n=0$ being clear: $I_0=(0)$. 

Assume, for some $n\ge 1$, that $I_{n-1}$ is a prime ideal of $R_{n-1}$.
Define a $\Z$-grading of $R_{n-1}$ over $A_0$ for which $z_i$ is homogeneous of degree $b_1\cdots b_ia_{i+1}\cdots a_{n-1}$, $0\le i\le n-1$.
Then the quotient ring $A:=R_{n-1}/I_{n-1}$ is a $\Z$-graded integral domain which is finitely generated over $A_0$.

Let $F\in A$ be the image of $z_{n-1}^{b_n}$, noting that $\deg F=b_1\cdots b_n$. By hypothesis, $\gcd (a_n,\deg F)=1$. Therefore, by {\it Theorem\,\ref{Samuel}(a)}, the ring
$A[Z]/(Z^{a_n}-F)\cong R_n/I_n$ is an integral domain. 

It follows by induction that $I_n$  is a prime ideal of $R_n$ for each integer $n\ge 0$.
\end{proof}

\begin{theorem}\label{threefold} Let $K$ be a noetherian UFD. Given the integer $n\ge 0$, let $K[z_0,\hdots ,z_{n+1}]\cong K^{[n+2]}$, and let $a_1,\hdots ,a_n,b_1,\hdots ,b_n$ be positive integers such that $\gcd (a_i,b_1\cdots b_i)=1$ for each $i$. Given nonzero $f\in K$, the ring
\[
A_n:=K[z_0,\hdots ,z_{n+1}]/(fz_{i+1}+z_i^{a_i}+z_{i-1}^{b_i})_{0\le i\le n}
\]
is a UFD whose field of fractions equals  ${\rm frac}(K[z_0,z_1])\cong ({\rm frac}\, K)^{(2)}$. 
\end{theorem}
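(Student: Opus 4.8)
The plan is to treat the two assertions separately, obtaining the function field by inverting $f$ and unique factorization by induction on $n$ via the affine-modification criterion of Theorem~\ref{affine-mod}. Since $f\in K\setminus\{0\}$, in the localization $A_n[f^{-1}]$ the relation indexed by $i$ may be solved as $z_{i+1}=-f^{-1}(z_i^{a_i}+z_{i-1}^{b_i})$. Reading these from the bottom of the chain upward expresses $z_2,\dots,z_{n+1}$ as polynomials in $z_0,z_1$ with coefficients in $K[f^{-1}]$, so $A_n[f^{-1}]=K[f^{-1}][z_0,z_1]=(K[f^{-1}])^{[2]}$. Once one checks that $z_0,z_1$ are algebraically independent over $K$, this localization is a polynomial ring over the UFD $K[f^{-1}]$, hence a UFD, and $\mathrm{frac}(A_n)=\mathrm{frac}(K[z_0,z_1])\cong(\mathrm{frac}\,K)^{(2)}$, which is the function-field claim.

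For unique factorization I would induct on $n$. When $n=0$ there are no relations and $A_0=K[z_0,z_1]\cong K^{[2]}$ is a UFD. For the inductive step, suppose $A_{n-1}$ is a UFD and set $w_n=z_n^{a_n}+z_{n-1}^{b_n}\in A_{n-1}$. Then $A_n=A_{n-1}[z_{n+1}]/(fz_{n+1}+w_n)$, and once $\gcd(f,w_n)=1$ is established this presents $A_n$ as the affine modification $A_{n-1}[w_n/f]=A_{n-1}[f^{-1}I]$ of $A_{n-1}$ along $f$ with center $I=(f,w_n)$. Since $A_{n-1}$ is finitely generated over the noetherian ring $K$, it is noetherian, so it remains only to verify the three hypotheses of Theorem~\ref{affine-mod}.

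Hypothesis (1) holds by construction. Because $A_{n-1}$ is a UFD and $f\in K$, it suffices to consider the prime factors $p$ of $f$ in $K$, each of which I claim stays prime in $A_{n-1}$. The key observation is that modulo such a $p$ one has $f\equiv 0$, so every relation of $A_{n-1}$ collapses to $z_i^{a_i}+z_{i-1}^{b_i}=0$ and the top variable $z_n$ becomes free; concretely $A_{n-1}/pA_{n-1}\cong\big((K/pK)[z_0,\dots,z_{n-1}]/I_{n-1}\big)[z_n]$, which is a domain by Lemma~\ref{prime}. This confirms that each such $p$ is prime in $A_{n-1}$, and since $w_n=z_n^{a_n}+(\text{terms free of }z_n)$ is nonzero there, no $p$ divides $w_n$, giving $\gcd(f,w_n)=1$, i.e.\ hypothesis (2). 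Passing to the further quotient by $w_n$ produces $(K/pK)[z_0,\dots,z_n]/I_n$, again a domain by Lemma~\ref{prime} (here the hypothesis $\gcd(a_n,b_1\cdots b_n)=1$ is used). Thus $(p,w_n)$ is prime for every prime divisor $p$ of $f$, which is hypothesis (3), and Theorem~\ref{affine-mod} yields that $A_n$ is a UFD, closing the induction.

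The step I expect to be the main obstacle is hypothesis (3): one must recognize that reduction modulo a prime divisor of $f$ degenerates the twisted chain $fz_{i+1}+z_i^{a_i}+z_{i-1}^{b_i}$ into the untwisted Pham--Brieskorn chain governed by $I_n$, and that the variable $z_n$ is still free in $A_{n-1}$ before the final relation is imposed --- exactly the configuration to which Lemma~\ref{prime} applies at the top of the chain. The remaining work is bookkeeping, together with the routine verification that $z_0,z_1$ stay algebraically independent after inverting $f$.
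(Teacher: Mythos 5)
Your proof is correct and follows essentially the same route as the paper: induction on $n$, realizing $A_m$ as the affine modification $A_{m-1}[f^{-1}I]$ with $I=(f,\,z_m^{a_m}+z_{m-1}^{b_m})$, and verifying the hypotheses of Theorem~\ref{affine-mod} by observing that reduction modulo a prime divisor $p$ of $f$ collapses the twisted chain to the untwisted one governed by Lemma~\ref{prime}. Your explicit checks of hypotheses (2) and of the function-field claim (via localizing at $f$) are slightly more detailed than the paper's, which simply notes that affine modifications preserve quotient fields, but the substance is identical.
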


\begin{proof} If $f\in K^*$, then $A_n\cong K^{[2]}$ is a UFD. So assume that $f$ is not a unit of $K$. 

We proceed by induction on $n$, the case $n=0$ being clear. Note that each ring $A_m$ is noetherian, $0\le m\le n$. Given $m\ge 1$, assume that $A_{m-1}$ is a UFD. 
Let $p\in K$ be a prime divisor of $f$. Then
\[
A_{m-1}/(p,z_m^{a_m}+z_{m-1}^{b_m})\cong
(K/pK)[Z_0,\hdots ,Z_m]/(Z_1^{a_1}+Z_0^{b_1},\hdots ,Z_m^{a_m}+Z_{m-1}^{b_m})
\]
where $(K/pK)[Z_0,\hdots ,Z_m]\cong (K/pK)^{[m+1]}$. By {\it Lemma\,\ref{prime}}, $(p,z_m^{a_m}+z_{m-1}^{b_m})$ is a prime ideal of $A_{m-1}$.
Define the ideal $I\subset A_{m-1}$ by $I=(f,z_m^{a_m}+z_{m-1}^{b_m})$. Since $A_m\cong A_{m-1}[f^{-1}I]$, it follows by  {\it Theorem\,\ref{affine-mod}}  that $A_m$ is a UFD.

Therefore, by induction $A_n$ is a UFD.
Since affine modifications preserve quotient fields, we see that ${\rm frac}(A_n)={\rm frac}(A_0)={\rm frac}(K[z_0,z_1])\cong ({\rm frac}\, K)^{(2)}$.
\end{proof}

Rings of the type described in this theorem are considered in {\it Section\,\ref{Dim-3}}, where $K=k^{[1]}$ for a field $k$. 


\section{Degree Functions, $G$-Gradings and $\G_m$-Actions}\label{degree+grading}

An abelian group $G$ is {\bf totally ordered} if $G$ has a total order $\le$ which is translation invariant: 
\begin{center}
For all $x,y,z\in G$, $x+z\le y+z$ implies $x\le y$.
\end{center}

\subsection{Degree Functions} 

Assume that $G$ is a totally ordered abelian group, and that $B$ is an integral domain with degree function $\deg :B\to G\cup\{ -\infty\}$. We say the $\deg$ {\bf has values in $G$}. 
The induced filtration is
\[
B=\bigcup_{g\in G}\mathcal{F}_g
\]
where the sets $\mathcal{F}_g=\{ b\in B\, |\, \deg b\le g\}$ are the associated {\bf degree modules}. The associated {\bf degree submodules} are:
\[
\mathcal{V}_g=\{ f\in B\,\vert\, \deg f<g\}\subset\mathcal{F}_g
\]
Note that $\deg$ can be extended to $K={\rm frac}(B)$ by letting $\deg (f/g)=\deg f-\deg g$ for $f,g\in B$, $g\ne 0$. 
Note also that, if $B$ is a field, then $\deg$ is a degree function on $B$ if and only if $(-\deg)$ is a valuation of $B$. 
\begin{definition} {\rm $\deg$ is {\bf non-negative} if $\mathcal{V}_0=\{ 0\}$.}
\end{definition}
\begin{proposition}\label{degree} With the assumptions and notation above:
\begin{itemize}
\item [{\bf (a)}] $\mathcal{F}_0$ is a subring of $B$ which is integrally closed in $B$.
\item [{\bf (b)}] $\mathcal{F}_g$ is an ideal of $\mathcal{F}_0$ for each $g\le 0$. 
\item [{\bf (c)}] $\mathcal{F}_g$ is an $\mathcal{F}_0$-module for each $g\in G$, and $\mathcal{V}_g$ is a submodule. 
\item [{\bf (d)}] If $\deg$ is non-negative, then $\mathcal{F}_0$ is factorially closed in $B$ and $B^*\subset\mathcal{F}_0$. 
\item [{\bf (e)}] If $\deg$ is non-negative and $B$ is a UFD, then $\mathcal{F}_0$ is a UFD.
\item [{\bf (f)}] If $B$ is a normal ring, then $\mathcal{F}_0$ is a normal ring. 
\item [{\bf (g)}] If $B$ is a field, then $\mathcal{F}_0$ is a valuation ring of $B$ and ${\rm frac}(\mathcal{F}_0)=B$. 
\end{itemize} 
\end{proposition}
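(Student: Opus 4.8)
The plan is to build everything on two elementary facts about the degree function. First, from multiplicativity $\deg(fg)=\deg f+\deg g$ applied to $1\cdot 1$ and $(-1)\cdot(-1)$ one gets $\deg 1=\deg(-1)=0$, so $1\in\mathcal{F}_0$ and $\mathcal{F}_0$ is stable under negation. Second, the strict-domination lemma: if $\deg f>\deg g$ then $\deg(f+g)=\deg f$, and more generally if a single summand strictly dominates all the others then the degree of the sum equals that summand and is in particular not $-\infty$. I would record this lemma once, since it is what makes degrees behave like leading-term orders and it drives the integral-closedness argument in part (a).

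With these in hand, parts (a), (b), (c) are direct verifications. For (a), closure of $\mathcal{F}_0$ under sums follows from subadditivity, and under products from $\deg f+\deg g\le 0$ whenever both degrees are $\le 0$ (translation invariance). For integral closedness I would take $b\in B$ integral over $\mathcal{F}_0$, say $b^n+c_{n-1}b^{n-1}+\cdots+c_0=0$ with each $\deg c_i\le 0$, and argue by contradiction: if $\deg b>0$ then $\deg(b^n)=n\deg b$ strictly exceeds $\deg(c_ib^i)\le i\deg b$ for every $i<n$, so by the domination lemma the left-hand side has degree $n\deg b\ne-\infty$, contradicting that it vanishes; hence $\deg b\le 0$. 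Parts (b) and (c) are the same bookkeeping: $\mathcal{F}_g$ and $\mathcal{V}_g$ are additive subgroups, and multiplying by $c\in\mathcal{F}_0$ shifts the degree by $\deg c\le 0$, which preserves both $\deg\le g$ and $\deg<g$ (the strict inequality survives since $\deg c+\deg a\le\deg a<g$).

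Part (d) is where non-negativity enters. If $a,b\in B\setminus\{0\}$ and $ab\in\mathcal{F}_0$, then $\deg a+\deg b\le 0$ while $\deg a,\deg b\ge 0$ (no nonzero element has negative degree), which forces $\deg a=\deg b=0$; thus $a,b\in\mathcal{F}_0$, so $\mathcal{F}_0$ is factorially closed, and $B^*\subset\mathcal{F}_0$ follows by applying this to $uv=1$. Part (e) is the main obstacle, being the only part beyond routine checking. I would invoke the characterization that a domain in which every nonzero nonunit is a product of prime elements is a UFD. Given a nonzero nonunit $a\in\mathcal{F}_0$, factor $a=p_1\cdots p_r$ into primes of $B$; repeated use of factorial closure from (d) shows every $p_i\in\mathcal{F}_0$. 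The delicate point is that each $p_i$ stays prime in $\mathcal{F}_0$: if $p_i\mid xy$ in $\mathcal{F}_0$, then $p_i\mid x$ (say) in $B$, so $x=p_id$ with $d\in B$, and factorial closure forces $d\in\mathcal{F}_0$, giving $p_i\mid x$ already in $\mathcal{F}_0$; moreover $p_i$ is a nonunit of $\mathcal{F}_0$ because $\mathcal{F}_0^*=\mathcal{F}_0\cap B^*$. Hence $a$ factors into $\mathcal{F}_0$-primes and $\mathcal{F}_0$ is a UFD.

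Finally, parts (f) and (g) are short. For (f), if $\alpha\in{\rm frac}(\mathcal{F}_0)\subseteq{\rm frac}(B)$ is integral over $\mathcal{F}_0$, then it is integral over $B$, so $\alpha\in B$ by normality of $B$, and then $\alpha\in\mathcal{F}_0$ by the integral-closedness proved in (a); thus $\mathcal{F}_0$ is normal. For (g), when $B$ is a field the preliminary remark identifies $v=-\deg$ as a valuation, and $\mathcal{F}_0=\{b:v(b)\ge 0\}$ is precisely its valuation ring; for any $b\in B^*$ either $b\in\mathcal{F}_0$ or $v(b^{-1})\ge 0$ gives $b^{-1}\in\mathcal{F}_0$, which simultaneously exhibits the valuation-ring property and shows ${\rm frac}(\mathcal{F}_0)=B$.
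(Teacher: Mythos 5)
Your proof is correct. The paper's own argument is a single line: extend $\deg$ to $K={\rm frac}(B)$, note that $V=\{f\in K\mid \deg f\le 0\}$ is a valuation ring of $K$ with $\mathcal{F}_0=V\cap B$, deduce (a), (f), (g) from the standard facts about valuation rings, and leave (b)--(e) to the reader. You instead prove (a) by the dominant-term argument applied directly to an equation of integral dependence inside $B$, and get (f) from (a) together with normality of $B$; this is exactly the computation hidden inside ``valuation rings are integrally closed,'' so the two routes are the same calculation packaged differently, and you only pass to the valuation-ring viewpoint where it is unavoidable, in (g). What the paper's packaging buys is that (a), (f), (g) fall out of one observation; what yours buys is a self-contained, filtration-level verification that also supplies the omitted parts (b)--(e), including the two points in (e) that genuinely need care: that $\mathcal{F}_0^*=\mathcal{F}_0\cap B^*$ (so primes of $B$ lying in $\mathcal{F}_0$ remain nonunits, and a nonunit of $\mathcal{F}_0$ is a nonunit of $B$), and that divisibility in $B$ descends to $\mathcal{F}_0$ via factorial closure. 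I see no gaps.
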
 

\begin{proof} Extend $\deg$ to $K={\rm frac}(B)$ and let $V=\{ f\in K\,\vert\, \deg f\le 0\}$. Then $V$ is a valuation ring of $K$, and $\mathcal{F}_0=V\cap B$. This proves parts (a), (f) and (g). Proofs for statements (b)-(e) are left to the reader.
\end{proof}

\subsection{$k$-Algebras}

Suppose that $B$ is an integral $k$-domain for a ground field $k$. $\deg$ is a degree function {\bf over} $k$ if $\deg (k^*)=\{ 0\}$. Hereafter, any degree function on $B$ is assumed to be over $k$ when $k$ is the ground field. In this case, each degree module $\mathcal{F}_g$ is a $k$-vector space, and the associated degree submodule $\mathcal{V}_g$ is a subspace of $\mathcal{F}_g$. Let $\mathcal{W}_g$ be a complementary subspace, that is:
\[
\mathcal{F}_g=\mathcal{V}_g\oplus\mathcal{W}_g
\]
Then $\deg b=g$ for every nonzero $b\in\mathcal{W}_g$.
\begin{definition} {\rm Let $\deg$ be a degree function on $B$ with values in $G$. 
\begin{enumerate} 
\item $\deg$ is {\bf positive} if it is non-negative and $\mathcal{F}_0=k$.
\item $\deg$ is of {\bf finite type} if $\dim_k\mathcal{F}_g<\infty$ for each $g\in G$. 
\end{enumerate}
}
\end{definition}

Note that these properties are preserved under restriction: If $A\subset B$ is a $k$-subalgebra, then the degree function $\deg\vert_A$ on $A$ is non-negative (respectively, positive, of finite type) if $\deg$ is non-negative (respectively, positive, of finite type).

\begin{lemma}\label{finite-type} If $\deg$ is of finite type, then $\deg$ is non-negative. 
\end{lemma}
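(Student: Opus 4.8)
The plan is to argue by contradiction, contradicting the finite-type hypothesis at the single value $g=0\in G$. Suppose $\deg$ is of finite type but fails to be non-negative, so that $\mathcal{V}_0\neq\{0\}$; that is, there exists a nonzero $b\in B$ with $\deg b<0$. The goal is to manufacture infinitely many $k$-linearly independent elements of $\mathcal{F}_0$, forcing $\dim_k\mathcal{F}_0=\infty$ and so contradicting finite type, which gives $\dim_k\mathcal{F}_0<\infty$.

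First I would record the two basic properties of the degree function that I need. Since $\deg$ is a degree function over $k$, we have $\deg 1=0$, and since $B$ is an integral domain the multiplicativity $\deg(uv)=\deg u+\deg v$ holds (this is the same property that makes the extension of $\deg$ to $\mathrm{frac}(B)$ well defined and $\mathcal{F}_0$ a valuation ring in {\it Proposition\,\ref{degree}}). Consequently $\deg(b^n)=n\deg b$ for all $n\geq 0$. Because $\deg b<0$, the sequence $0=\deg(b^0)>\deg(b^1)>\deg(b^2)>\cdots$ is strictly decreasing in $G$ and every term is $\le 0$, so each power $b^n$ lies in $\mathcal{F}_0$.

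The key step is the observation that elements of $B$ with pairwise distinct degrees are $k$-linearly independent. I would prove this from the ultrametric-type inequality $\deg(u+v)\le\max(\deg u,\deg v)$, with equality whenever $\deg u\neq\deg v$ (no cancellation of the top-degree term). Given a hypothetical nontrivial relation $\sum_i c_i b^{n_i}=0$ with the $c_i\in k^*$ and the $n_i$ distinct, the degrees $\deg(c_i b^{n_i})=n_i\deg b$ are distinct (multiplying by $c_i\in k^*$ does not change the degree, as $\deg$ is over $k$), so there is a unique term of maximal degree; the whole sum then has that maximal degree and is therefore nonzero, a contradiction. Hence $\{b^n\mid n\ge 0\}$ is an infinite $k$-linearly independent subset of $\mathcal{F}_0$, giving $\dim_k\mathcal{F}_0=\infty$ and contradicting the finite-type hypothesis. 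Therefore no nonzero element of negative degree exists, i.e.\ $\mathcal{V}_0=\{0\}$ and $\deg$ is non-negative.

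I expect no serious obstacle here: the argument is short and the only point requiring care is the non-cancellation lemma (distinct degrees imply linear independence), which rests on the equality case of the ultrametric inequality. Everything else—multiplicativity of $\deg$, $\deg 1=0$, and reading off $\dim_k\mathcal{F}_0<\infty$ from finite type at $g=0$—is routine.
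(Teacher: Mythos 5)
Your proof is correct, but it extracts the contradiction by a different mechanism than the paper. Both arguments start the same way: take a nonzero $f$ of negative degree and observe that all positive powers $f^n$ land in a single degree module ($\mathcal{F}_{\deg f}$ in the paper, $\mathcal{F}_0$ in your version), which is finite-dimensional by hypothesis. From there the paper concludes that $k[f]$ is a finite-dimensional $k$-domain, hence a field, so $f^{-1}\in k[f]\subset\mathcal{F}_0$ while $\deg f^{-1}=-\deg f>0$ --- a contradiction with the definition of $\mathcal{F}_0$. You instead show directly that the powers $f^n$ are $k$-linearly independent, because they have pairwise distinct degrees and a degree function cannot cancel the unique top-degree term of a sum; this makes $\mathcal{F}_0$ infinite-dimensional, contradicting finite type head-on. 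Your route is slightly more self-contained (it needs only the equality case of $\deg(u+v)\le\max(\deg u,\deg v)$, not the fact that a finite-dimensional domain over a field is a field, nor any discussion of inverses), and the non-cancellation lemma you isolate is the same principle that underlies the linear-independence arguments used later in the paper (e.g.\ in the proof of \emph{Theorem\,\ref{classify}}). The paper's version is marginally shorter on the page. Both are complete; the only point you should make sure is nailed down is that the degrees $n\deg f$ are pairwise distinct, which uses that the totally ordered group $G$ is torsion-free --- true, and implicit in your "strictly decreasing" observation.
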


\begin{proof} Given $f\in\mathcal{F}_g$ for $g<0$, we have:
\[
fk[f]\subset\mathcal{F}_g \implies \dim_k fk[f]<\infty\implies \dim_k k[f]<\infty 
\]
We conclude that $k[f]$ is a field. If $f\ne 0$, then $f\in k[f]^*$. But then
\[
f^{-1}\in k[f]\subset\mathcal{F}_0 \quad {\rm and}\quad \deg f^{-1}>0
\]
which is a contradiction. Therefore, $f=0$.
\end{proof}

\subsection{$G$-Gradings} 

Let $B$ be an integral $k$-domain and $G$ an abelian group (not necessarily torsion free).  
Let $\mathfrak{g}$ be a $G$-grading of $B$ over $k$:
\[
B=\bigoplus_{g\in G}B_g\,\, ,\,\, k\subset B_0
\]
If $G$ is torsion free, then any choice of total order on $G$ gives a degree function $\deg_{\mathfrak{g}}$ on $B$. 
In this case, given $f\in B$, $\bar{f}$ will denote the highest-degree homogeneous summand of $f$.
\begin{definition} {\rm Under the above hypotheses:
\begin{enumerate} 
\item $\mathfrak{g}$ is {\bf non-negative} if $B_g=\{ 0\}$ for $g<0$.
\item $\mathfrak{g}$ is {\bf positive} if it is non-negative and $B_0=k$.
\item $\mathfrak{g}$ is of {\bf finite type} if $\dim_kB_g<\infty$ for each $g\in G$. 
\end{enumerate}
}
\end{definition}

These properties are preserved under restriction to graded subgalgebras: If $A\subset B$ is a graded $k$-subalgebra, then the induced grading 
\[
A=\bigoplus_{g\in G}A_g \,\, ,\,\, A_g=B\cap B_g
\]
of $A$ is non-negative (respectively, positive, of finite type) if $\mathfrak{g}$ is non-negative (respectively, positive, of finite type).

Note also that, if $G$ is totally ordered, and if $\mathfrak{g}$ is non-negative (respectively, positive), then $\deg_{\mathfrak{g}}$ is non-negative (respectively, positive). 
Thus, for any $k$-subalgebra $A$ of $B$, if $\mathfrak{g}$ is non-negative (respectively, positive), then $\deg_{\mathfrak{g}}$ restricts to a non-negative (respectively, positive) degree function on $A$.

However, it can happen that $\mathfrak{g}$ is of finite type, while $\deg_{\mathfrak{g}}$ is not. For example, if $B=k[x,x^{-1}]$, the ring of Laurent polynomials with the standard $\Z$-grading, 
then the grading is of finite type, but the associated degree function is not non-negative, and therefore not of finite type. 
However, if $\mathfrak{g}$ is non-negative and of finite type, then $\deg_{\mathfrak{g}}$ is of finite type. 
\begin{lemma}\label{sparse} If $\mathfrak{g}$ is positive, then $B^*=k^*$.
\end{lemma}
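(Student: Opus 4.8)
The plan is to show that any nonzero homogeneous unit must live in $B_0=k$, and then bootstrap from there to arbitrary units. The grading being positive means $B=\bigoplus_{g\ge 0}B_g$ with $B_0=k$, so $k^*\subset B^*$ is immediate; the content is the reverse inclusion. First I would recall that the total order on $G$ (chosen to define $\deg_{\mathfrak{g}}$) makes $\deg_{\mathfrak{g}}$ a positive degree function by the remark just above, and that a positive degree function satisfies $B^*\subset\mathcal{F}_0$ by \emph{Proposition\,\ref{degree}(d)}. Since positivity gives $\mathcal{F}_0=k$, this already forces $B^*\subset k$, hence $B^*=k^*$.

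Alternatively, if one wants an argument internal to the grading that does not route through the degree function, I would argue directly as follows. Suppose $u\in B^*$ with inverse $v$, and write $u=\sum_{g} u_g$ and $v=\sum_h v_h$ as finite sums of nonzero homogeneous components, indexed by finite subsets of $\{g\in G: g\ge 0\}$. Let $u_{g_0}$ and $v_{h_0}$ be the top-degree components (with respect to the total order on $G$). Comparing the top homogeneous component of $uv=1$, which is homogeneous of degree $0$, I get $u_{g_0}v_{h_0}=0$ if $g_0+h_0>0$; since $B$ is a domain and these summands are nonzero, this is impossible, so $g_0+h_0=0$. Because the grading is non-negative, both $g_0,h_0\ge 0$, forcing $g_0=h_0=0$. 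Thus $u$ and $v$ have no components in positive degree, i.e. $u=u_0\in B_0=k$ and $v=v_0\in k$. Hence $u\in k^*$.

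The routing through \emph{Proposition\,\ref{degree}(d)} is the cleaner approach, so I would present that as the main line and reserve the direct component-comparison only if a self-contained grading argument is preferred. The one genuine subtlety to watch is that $G$ need not be torsion free, so a total order on $G$ need not exist; in that case the degree-function route is unavailable and the direct argument above must be used instead. But the direct argument never actually uses a total order: it only uses non-negativity of the grading (to conclude $g\ge 0$ for all occurring degrees) together with the fact that a finite nonempty subset of the monoid $G_{\ge 0}=\{g: B_g\ne 0\}$ of realized degrees has an element that is maximal under the natural partial order induced by $G_{\ge 0}$, and that the product of the two respective top components must vanish unless both top degrees are $0$. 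I expect the main (and only real) obstacle is handling this non-torsion-free case carefully, ensuring the ``top component'' comparison is valid when $G$ carries no total order; working in the submonoid generated by the finitely many degrees appearing in $u$ and $v$, which is finitely generated and cancellative, lets me pick maximal elements and complete the argument.
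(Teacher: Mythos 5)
Your main line of argument is correct and is essentially the paper's own proof: the paper also passes to the degree function $\deg_{\mathfrak{g}}$ and observes that a unit of positive degree would have an inverse of negative degree, which is exactly the content of \emph{Proposition\,\ref{degree}(d)} that you cite, giving $B^*\subset\mathcal{F}_0=B_0=k$. The alternative component-comparison argument and the worry about non-torsion-free $G$ are unnecessary here, since the hypothesis that $\mathfrak{g}$ is positive already presupposes an order on $G$ under which $\deg_{\mathfrak{g}}$ is defined.
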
 

\begin{proof} Assume $\mathfrak{g}$ is positive, and let $u\in B^*$. If $\deg_{\mathfrak{g}} u>0$, then $\deg_{\mathfrak{g}}(u^{-1})<0$, which is impossible, since $\deg b<0$ implies $b=0$. Therefore, $u\in\mathcal{F}_0=B_0=k$. 
\end{proof}

\subsection{$\G_m$-Actions}
Assume that $B$ is an affine $k$-domain and set $X={\rm Spec}(B)$. Let
\[
\rho (\mathfrak{g}) :\G_m\to {\rm Aut}_k(X)
\]
be the 
$\G_m$-action of $X$ induced by the nonzero $\Z$-grading $\mathfrak{g}$ of $B$. Recall the following definitions. 
\begin{enumerate}
\item $\rho (\mathfrak{g})$ is {\bf effective} if $\gcd \{ \deg_{\mathfrak{g}}f\,\vert\, f\in B\setminus\{ 0\}\}=1$.
\item $\rho (\mathfrak{g})$ is {\bf elliptic} if either $\mathfrak{g}$ or $(\mathfrak{-g})$ is positive.
\item $\rho (\mathfrak{g})$ is {\bf parabolic} if either $\mathfrak{g}$ or $(\mathfrak{-g})$ is non-negative, but not positive.
\item $\rho (\mathfrak{g})$ is {\bf hyperbolic} if it is neither elliptic nor parabolic.
\item $\rho (\mathfrak{g})$ is {\bf good} if it is both elliptic and effective. 
\end{enumerate}
Note that any $\G_m$-action on $X$ is of the form $\rho (c\mathfrak{g})$ for some $\Z$-grading $\mathfrak{g}$ where $\rho (\mathfrak{g})$ is effective. 
Two $\G_m$-actions $\sigma ,\tau$ of $X$ are {\bf equivalent} if there exists a $\Z$-grading $\mathfrak{g}$ of $B$ and nonzero $c,d\in\Z$ such that $\sigma =\rho (c\mathfrak{g})$ and 
$\tau =\rho (d\mathfrak{g})$. In particular, the equivalence class of any nontrivial $\G_m$-action contains exactly two effective members, being of the form $\rho (\pm\mathfrak{g})$.

The following result is needed in {\it Section\,\ref{Dim-2}}, 
and is due to Flenner and Zaidenberg; see \cite{Flenner.Zaidenberg.05b}, Theorem\,3.3. 

\begin{theorem}\label{FZ}  Let $X$ be a normal affine surface over $\C$. 
If $\C [X]$ is rigid and $X\not\cong \C^*\times \C^*$, then all $\C^*$-actions on $X$ are equivalent. 
\end{theorem}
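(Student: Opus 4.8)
The plan is to argue by contradiction: suppose $X$ carries two $\C^*$-actions $\sigma$ and $\tau$ that are \emph{not} equivalent, and deduce that either $\C[X]$ fails to be rigid or $X\cong\C^*\times\C^*$, contradicting the hypotheses. The first step is to reinterpret the equivalence relation group-theoretically. Writing $B=\C[X]$, a $\C^*$-action is the same datum as a $\Z$-grading $\mathfrak{g}$ of $B$, and for $c\in\Z\setminus\{0\}$ all the reparametrizations $\rho(c\mathfrak{g})$ share the same image torus $\rho(\mathfrak{g})(\C^*)\subset{\rm Aut}_{\C}(X)$, since $\lambda\mapsto\lambda^c$ is surjective on $\C^*$. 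Conversely, two $\C^*$-actions with a common image $1$-torus differ only by a homomorphism $\lambda\mapsto\lambda^m$ and hence arise from one grading. Thus $\sigma\sim\tau$ precisely when $\sigma(\C^*)=\tau(\C^*)$ as subgroups, so non-equivalence means the images $T_1=\sigma(\C^*)$ and $T_2=\tau(\C^*)$ are \emph{distinct} one-dimensional subtori of ${\rm Aut}_{\C}(X)$.

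Next I would pass to the subgroup $G\subset{\rm Aut}_{\C}(X)$ generated by $T_1$ and $T_2$. Since these are distinct connected one-dimensional subgroups, one expects $G$ to be a connected linear algebraic group of dimension $\ge 2$ acting faithfully on the affine surface $X$; and because the two one-parameter subgroups move a generic point in independent directions, a generic $G$-orbit should be two-dimensional, so that $X$ is almost homogeneous under $G$. This is the step I expect to be the main obstacle: one must verify that the group generated by the two tori is genuinely an algebraic group acting algebraically on $X$ with a dense orbit (equivalently, that the presence of two inequivalent $\C^*$-actions forces $X$ to be almost homogeneous), given that the full automorphism group of an affine surface is only ind-algebraic.

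Granting this, the endgame is a clean dichotomy. Over $\C$ (characteristic $0$), a connected linear algebraic group is either a torus or contains a copy of $\G_a$ (any nontrivial unipotent element generates a $\G_a$ via its one-parameter subgroup). If $G$ contains $\G_a$, restriction gives a nontrivial $\G_a$-action on $X$, hence a nonzero locally nilpotent derivation of $B$, so $ML(B)\ne B$ and $B$ is not rigid -- contradiction. Otherwise $G$ is a torus; having rank $\ge 2$ and acting faithfully on a surface, it must be $\G_m^2$ acting with finite generic stabilizer and dense orbit (a positive-dimensional generic stabilizer would be a subtorus fixing a dense open set, hence all of $X$, violating faithfulness). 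Thus $X$ is a normal affine toric surface, and here I would invoke the classification of affine toric surfaces: every such surface other than the two-torus $\C^*\times\C^*$ possesses a torus-invariant boundary curve and therefore a Demazure root subgroup $\cong\G_a$, once more contradicting rigidity. Hence $X\cong\C^*\times\C^*$, against hypothesis. This contradiction forces $\sigma$ and $\tau$ to be equivalent, proving that all $\C^*$-actions on $X$ are equivalent.
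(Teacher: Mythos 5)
First, a point of orientation: the paper offers no proof of this statement at all --- it is quoted verbatim from Flenner--Zaidenberg \cite{Flenner.Zaidenberg.05b}, Theorem 3.3 --- so there is no internal argument to compare yours against; I can only assess your proposal on its own terms. Your opening reduction (equivalence of actions $=$ equality of the image one-tori in ${\rm Aut}_{\C}(X)$) is correct, and your endgame is sound: a connected linear algebraic group over $\C$ with no unipotents is a torus, and among normal affine toric surfaces only $\C^*\times\C^*$ lacks a Demazure root subgroup and hence a $\G_a$-action. But the step you yourself flag as the ``main obstacle'' is a genuine gap, and it is fatal to this line of attack. The automorphism group of an affine surface is only an ind-group, and the subgroup generated by two distinct one-dimensional tori need not be a finite-dimensional algebraic group: already for $X=\A^2_{\C}$, a linear torus and a conjugate of it by a non-affine automorphism typically generate an infinite-dimensional subgroup of ${\rm Aut}_{\C}(\A^2)$. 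There is no general principle that closes two tori up into an algebraic group acting with a dense orbit; the rigidity hypothesis would have to be invoked \emph{before} this step, not after, and your argument gives no mechanism for doing so. (A secondary, smaller issue: ``distinct image tori move a generic point in independent directions'' also needs an argument --- one must rule out two distinct subtori inducing the same orbit foliation.)

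For contrast, the actual proof of Flenner--Zaidenberg never forms the group generated by the two actions. It works with the Dolgachev--Pinkham--Demazure presentation of a normal affine $\C^*$-surface: each action is elliptic, parabolic, or hyperbolic, and the ring $\C[X]$ is encoded by a curve together with one (elliptic/parabolic) or two (hyperbolic) $\Q$-divisors. Parabolic surfaces carry an $\A^1$-fibration and hence a nontrivial $\G_a$-action, so rigidity eliminates that case at the outset; the uniqueness statement is then obtained by comparing the DPD data of two elliptic or hyperbolic actions directly, with the toric surfaces (whose only rigid member is $\C^*\times\C^*$) emerging as the exceptional locus. So while your proposed dichotomy at the end mirrors the shape of the true exceptional cases, the route to it cannot be completed as written; you would need either to supply a finiteness theorem for the group generated by the two tori under the rigidity hypothesis, or to abandon the group-theoretic framing in favor of the DPD analysis.
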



\section{Signature Sequences for Non-Negative Degree Functions}\label{signature}
\subsection{Definition and Basic Properties}
Let $k$ be a field, $B$ an integral $k$-domain, $G$ a totally ordered abelian group, and $\deg :B\to G\cup\{ -\infty\}$ a non-negative degree function with filtration:
\[
B=\bigcup_{g\in G}\mathcal{F}_g
\]
\begin{definition} 
A {\bf signature sequence} $\vec{h}=\{ h_i\}_{i\in I}$ for $(B,\deg )$ is a sequence $h_i\in B$ indexed by an interval $0\in I\subset \N$ such that:
\begin{enumerate}
\item $h_0=1$
\item For each $n\in I$ with $n\ge 1$, $h_n\in \mathcal{F}_{d_n}\setminus k[h_1,\hdots ,h_{n-1}]$ where:
\[
d_n=\min\{ g\in G\, |\, \mathcal{F}_g\not\subset k[h_1,\hdots , h_{n-1}]\}
\]
\end{enumerate}
The {\bf length} of $\vec{h}$ is $\vert\vec{h}\vert$, and $\vec{h}$ is {\bf finite} or {\bf infinite} depending on $\vert\vec{h}\vert$. $\vec{h}$ is 
{\bf complete} if $B=k[\vec{h}]$.
\end{definition}

Note that the degree sequence $\{d_n\}\subset G$ has $d_n\le d_{n+1}$. In addition, for $n\le \vert\vec{h}\vert$, the subsequence $\{ h_0,\hdots ,h_n\}$ is a signature sequence. 

In case the degree function is of the form $\deg_{\mathfrak{g}}$ for some $G$-grading $\mathfrak{g}$ of $B$, we say that $\vec{h}$ is a {\bf homogeneous} signature sequence if each $h_n$ is homogeneous. 

By {\it Lemma\,\ref{finite-type}}, if a degree function $\deg$ on $B$ is of finite type, then it is non-negative. So signature sequences can be formed for any pair $(B,\deg )$ for which $\deg$ is of finite type. 

\begin{lemma} If $\deg$ is a degree function on $B$ of finite type, then $(B,\deg )$ admits a complete signature sequence. If $B$ is of finite type over $k$, then $(B,\deg )$ admits a complete signature sequence which is finite.
\end{lemma}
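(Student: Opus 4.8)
The plan is to build a signature sequence greedily and to verify that the finite-type hypothesis makes each step well-defined and forces the correct termination behavior. I would set $A_{n-1}=k[h_1,\dots,h_{n-1}]$ (so $A_0=k$) and, at each stage with $A_{n-1}\neq B$, define $d_n$ and choose $h_n\in\mathcal{F}_{d_n}\setminus A_{n-1}$ of degree exactly $d_n$; if $A_{n-1}=B$ the construction halts. Before anything else I need $\deg$ to be non-negative so that the filtration behaves well, and this is immediate from {\it Lemma\,\ref{finite-type}}.

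The first substantive point — and the one I expect to be the main obstacle — is showing that the minimum $d_n=\min\{g : \mathcal{F}_g\not\subset A_{n-1}\}$ actually exists, since a totally ordered group need not be well-ordered. Here I would exploit finite type through the observation that in any finite-dimensional $k$-subspace $V\subset B$ only finitely many degrees are realized by nonzero elements: the nested subspaces $\mathcal{F}_g\cap V$ form a finite chain, and $\deg v=\min\{g:v\in\mathcal{F}_g\}$ is one of its finitely many jump values. Applying this to $V=\mathcal{F}_{g_0}$ for any $g_0$ with $\mathcal{F}_{g_0}\not\subset A_{n-1}$, the degrees of elements of $\mathcal{F}_{g_0}\setminus A_{n-1}$ form a finite nonempty set whose minimum equals $d_n$: any element outside $A_{n-1}$ of strictly smaller degree would already lie in $\mathcal{F}_{g_0}$, contradicting minimality. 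This both establishes $d_n$ and shows $h_n$ may be taken with $\deg h_n=d_n$.

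Next I would establish completeness. The key is that $\{h_n\}_{n\ge 0}$ is $k$-linearly independent, since $h_n\notin A_{n-1}\supseteq\mathrm{span}_k\{h_0,\dots,h_{n-1}\}$. Consequently, for any fixed $g$ at most $\dim_k\mathcal{F}_g<\infty$ of the $h_n$ can lie in $\mathcal{F}_g$; as $\deg h_n=d_n$ and $\{d_n\}$ is non-decreasing, this forces $d_n>g$ for all large $n$ in the infinite case. Then $d_{N+1}>g$ gives $\mathcal{F}_g\subset A_N$, and since every element of $B$ lies in some $\mathcal{F}_g$, I conclude $B=\bigcup_n A_n=k[\vec{h}]$. (If instead the construction halts, completeness is immediate, as it halts only when $A_n=B$.)

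Finally, for the second assertion I would write $B=k[b_1,\dots,b_s]$ with each $b_j\in\mathcal{F}_{g_j}$ and set $g=\max_j g_j$. The completeness argument yields $N$ with $\mathcal{F}_g\subset A_N$, hence $b_1,\dots,b_s\in A_N$ and $A_N=B$. Since the construction can continue only while $A_{n-1}\neq B$, the sequence must terminate by stage $N$, so it is finite. The one place requiring care throughout is the interplay between degrees in $G$ and the finite-dimensionality of the $\mathcal{F}_g$; once the ``finitely many degrees in a finite-dimensional subspace'' fact is in hand, the remainder is routine bookkeeping.
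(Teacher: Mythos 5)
Your proof is correct and follows essentially the same greedy construction as the paper's, which likewise splits into the case where a finite complete sequence exists and the case where the sequence must be extended indefinitely. In fact you supply several details the paper leaves implicit: the argument that the minimum $d_n$ actually exists in a merely totally ordered group (via the finiteness of the set of degrees realized in a finite-dimensional subspace), the linear-independence justification for $\mathcal{F}_g\subset k[h_1,\hdots,h_n]$ for $n\gg 0$, and the deduction of the second assertion (finiteness of the sequence when $B$ is affine over $k$), which the paper's proof does not explicitly address.
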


\begin{proof} There are two cases to consider.

Case 1: There exists a complete finite signature sequence for $(B,\deg )$. 

Case 2: There is no complete finite signature sequence for $(B,\deg )$. In this case, 
any finite signature sequence $\{ h_0,\hdots ,h_n\}$ can be extended, that is, 
\[
d:=\min\{ g\in G\,\vert\, \mathcal{F}_g\not\subset k[h_1,\hdots ,h_n]\}
\]
exists, and we can choose $h_{n+1}\in \mathcal{F}_d\setminus k[h_1,\hdots ,h_n]$. By induction, there exists an infinite signature sequence $\vec{h}$. 
Since $\dim_k\mathcal{F}_g<\infty$ for each $g$, it follows that, given $g\in G$:
\[
\mathcal{F}_g\subset k[h_1,\hdots ,h_n] \quad {\rm for} \quad n\gg 0
\]
Therefore, $B=k[\vec{h}]$ and $\vec{h}$ is complete.
\end{proof}

Let $\vec{h}$ be a signature sequence of length $L$ for the pair $(B,\deg )$, with degree sequence $d_i$. Define subgroups $H_i,H\subset G$ by:
\[
H_i=\langle d_1,\hdots ,d_i\rangle\,\, , 1\le i\le L\,\, , \,\, {\rm and}\,\, H=\deg (B\setminus\{ 0\})
\]
\begin{proposition}\label{intersect} Let $\vec{h}$ be a signature sequence for $(B,\deg )$ of length at least $n$. 
\begin{itemize}
\item [{\bf (a)}] If $b\in B$ and $\deg b<d_n$, then $b\in k[h_1,\hdots ,h_{n-1}]$.
\item [{\bf (b)}] Given $g\in H_{n-1}$, write $\mathcal{F}_g=\mathcal{V}_g\oplus\mathcal{W}_g$. If $g\le d_n$, then:
\[
\mathcal{W}_g\cap k[h_1,\hdots ,h_{n-1}]\ne \{ 0\}
\]
\end{itemize}
\end{proposition}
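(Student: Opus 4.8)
The plan is to establish (a) directly from the minimality built into the definition of $d_n$, and then to obtain (b) as a consequence of (a) together with the multiplicativity of $\deg$ on products.

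For (a): by definition $d_n$ is the least element of the set $\{g\in G:\mathcal F_g\not\subset k[h_1,\dots,h_{n-1}]\}$, so for every $g<d_n$ we have $\mathcal F_g\subset k[h_1,\dots,h_{n-1}]$. Given a nonzero $b$ with $\deg b<d_n$, I set $g=\deg b$; then $b\in\mathcal F_g$ and $g<d_n$, so $b\in k[h_1,\dots,h_{n-1}]$. The case $b=0$ is trivial, and no further input is needed here.

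Before proving (b) I would record two preliminary facts. First, extending $\deg$ to ${\rm frac}(B)$ makes $-\deg$ a valuation ({\it Proposition\,\ref{degree}}), so $\deg$ is additive: $\deg(uv)=\deg u+\deg v$ for nonzero $u,v$. Second, $\deg h_i=d_i$ exactly for each $i$: indeed $h_i\in\mathcal F_{d_i}$ forces $\deg h_i\le d_i$, while $\deg h_i<d_i$ would place $h_i$ in some $\mathcal F_{g}$ with $g<d_i$, hence in $k[h_1,\dots,h_{i-1}]$ by the minimality defining $d_i$, contradicting that $\vec h$ is a signature sequence. Now for (b), take $g\in H_{n-1}$ with $g\le d_n$. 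Since $g$ lies in the semigroup generated by $d_1,\dots,d_{n-1}$, I write $g=\sum_{i=1}^{n-1}a_id_i$ with integers $a_i\ge 0$ and form the monomial $\mu=h_1^{a_1}\cdots h_{n-1}^{a_{n-1}}\in k[h_1,\dots,h_{n-1}]$. By additivity $\deg\mu=g$, so $\mu\in\mathcal F_g\setminus\mathcal V_g$. Decomposing $\mu=v+w$ with $v\in\mathcal V_g$ and $w\in\mathcal W_g$, we get $w\ne 0$, for otherwise $\mu=v\in\mathcal V_g$. Finally $\deg v<g\le d_n$, so part (a) yields $v\in k[h_1,\dots,h_{n-1}]$, and therefore $w=\mu-v\in\mathcal W_g\cap k[h_1,\dots,h_{n-1}]$ is the desired nonzero element.

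The step I expect to be the crux is producing a monomial in the $h_i$ of degree exactly $g$ from the hypothesis $g\in H_{n-1}$; this is precisely the point where $\deg h_i=d_i$ and the additivity of $\deg$ are used, and it forces one to read $H_{n-1}$ as the set of nonnegative combinations of $d_1,\dots,d_{n-1}$ (a genuine subgroup element below $d_n$ need not be realized as a degree in $k[h_1,\dots,h_{n-1}]$, as one sees already for the weights $3,5,7$ on $k^{[3]}$, where $7\in\langle 3,5\rangle$ as a group but no monomial in the first two generators has degree $7$). Once such a $\mu$ is in hand, the inequality $g\le d_n$ does no more than make (a) applicable to the strictly lower-degree remainder $v$, so it may safely be non-strict.
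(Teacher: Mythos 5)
Your proof is correct and follows essentially the same route as the paper's: part (a) from the minimality defining $d_n$ (the paper phrases it contrapositively), and part (b) by decomposing a monomial $h_1^{c_1}\cdots h_{n-1}^{c_{n-1}}$ of degree $g$ as $v+w$ and applying (a) to the remainder of degree $<g\le d_n$. Your side remark that $H_{n-1}$ must be read as the sub-semigroup of non-negative combinations of $d_1,\hdots,d_{n-1}$ is a fair observation, since the paper's proof also tacitly uses exactly that interpretation.
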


\begin{proof} Assume $f\in B\setminus k[h_1,\hdots ,h_{n-1}]$, and set $g =\deg f$. Then:
\[
f\in \mathcal{F}_g\setminus k[h_1,\hdots ,h_{n-1}]\implies g\ge d_n
\]
This proves part (a).

For part (b), since $g\in H_{n-1}$, there exist $c_1,\hdots ,c_{n-1}\in\N$ such that $\deg (h_1^{c_1}\cdots h_{n-1}^{c_{n-1}})=g$. Therefore, there exist $v\in\mathcal{V}_g$ and nonzero $w\in\mathcal{W}_g$ such that $h_1^{c_1}\cdots h_{n-1}^{c_{n-1}}=v+w$. Consequently:
\[
w-h_1^{c_1}\cdots h_{n-1}^{c_{n-1}}=v\in\mathcal{V}_g \implies \deg (w-h_1^{c_1}\cdots h_{n-1}^{c_{n-1}})<g\le d_n
\]
Part (a) implies $w-h_1^{c_1}\cdots h_{n-1}^{c_{n-1}}\in k[h_1,\hdots ,h_{n-1}]$, so $w\in k[h_1,\hdots ,h_{n-1}]$. This proves part (b).
\end{proof}

\begin{corollary}\label{irred} Let $\vec{h}$ be a signature sequence for $(B,\deg )$ of length at least $n$.
If $\deg$ is positive, then $h_n+b$ is irreducible in $B$ whenever $\deg b<\deg h_n$.
\end{corollary}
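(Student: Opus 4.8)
The plan is to take the element $f := h_n+b$, factor it as $f=pq$, and use the additivity of $\deg$ together with \emph{Proposition \ref{intersect}(a)} to force any nontrivial factor into $k[h_1,\dots,h_{n-1}]$, producing a contradiction with the defining property of the signature sequence.

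First I would record two preliminary facts. Since $h_n\notin k[h_1,\dots,h_{n-1}]$ while every element of degree $<d_n$ lies in $k[h_1,\dots,h_{n-1}]$ by \emph{Proposition \ref{intersect}(a)}, the element $h_n$ cannot have degree $<d_n$; combined with $h_n\in\mathcal{F}_{d_n}$ this gives $\deg h_n=d_n$. Moreover $d_n>0$, since positivity gives $\mathcal{F}_0=k$ and $h_n\notin k[h_1,\dots,h_{n-1}]\supseteq k$. Because $\deg b<\deg h_n=d_n$, the leading terms do not cancel, so $\deg f=d_n>0$. In particular $f\neq 0$, and as $\deg$ is positive we have $B^*\subseteq\mathcal{F}_0=k$ by \emph{Proposition \ref{degree}(d)}, giving $B^*=k^*$ and hence $f\notin B^*$. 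Thus $f$ is a nonzero nonunit, so the assertion of irreducibility is meaningful.

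Next I would take an arbitrary factorization $f=pq$ in $B$ with $p,q\neq 0$. Additivity of the degree function gives $\deg p+\deg q=\deg f=d_n$, and non-negativity gives $\deg p,\deg q\geq 0$. The goal is to show one factor is a unit, which (since $B^*=k^*=\mathcal{F}_0\setminus\{0\}$) amounts to showing one of $\deg p,\deg q$ equals $0$. Suppose instead both are positive; then both are strictly less than $d_n$, so \emph{Proposition \ref{intersect}(a)} places $p,q\in k[h_1,\dots,h_{n-1}]$, whence $f=pq\in k[h_1,\dots,h_{n-1}]$. But $\deg b<d_n$ forces $b\in k[h_1,\dots,h_{n-1}]$ as well, so $h_n=f-b\in k[h_1,\dots,h_{n-1}]$, contradicting the defining property of the signature sequence. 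Hence one factor has degree $0$, lies in $\mathcal{F}_0=k$, and is a unit, proving $f$ irreducible.

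This argument is short precisely because the real content has been pushed into \emph{Proposition \ref{intersect}(a)} and the structural facts about positive degree functions. The only points requiring care are the bookkeeping that $\deg h_n=d_n$ (so that $\deg f=d_n$ rather than something smaller) and the verification that $f$ is genuinely a nonunit; I do not anticipate a serious obstacle beyond these.
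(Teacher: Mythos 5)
Your proof is correct and follows essentially the same route as the paper's: factor $h_n+b$, use additivity of $\deg$ and Proposition~\ref{intersect}(a) to show that if neither factor has degree $0$ then $h_n\in k[h_1,\hdots,h_{n-1}]$, a contradiction, and conclude the degree-zero factor lies in $\mathcal{F}_0=k$. The extra bookkeeping you include ($\deg h_n=d_n$, $d_n>0$, and that $h_n+b$ is a nonunit) is sound and only makes explicit what the paper leaves implicit.
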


\begin{proof}
Assume that $h_n+b=uv$ for $u,v\in B$. If $\deg u<d_n$ and $\deg v<d_n$, then by {\it Proposition\,\ref{intersect}(a)} we have:
\[
u,v,b\in k[h_1,\hdots ,h_{n-1}] \implies uv=h_n+b\in k[h_1,\hdots ,h_{n-1}] \implies h_n\in k[h_1,\hdots ,h_{n-1}] 
\]
a contradiction. Therefore, either $\deg u\ge d_n$ or $\deg v\ge d_n$. Assume that $\deg u\ge d_n$. Then:
\[
d_n\le d_n+\deg v\le \deg u +\deg v=\deg (h_n+b)=d_n \implies \deg v=0 \implies v\in k^*
\]
Likewise, $u\in k^*$ if $\deg v\ge d_n$. 
\end{proof}

Note that, if $\vec{h}$ is a homogeneous signature sequence for a positive $G$-grading, this corollary implies that any $\beta\in B$ with $\bar{\beta}=h_n$ is irreducible and $\beta-h_n\in k[h_1,\hdots ,h_{n-1}]$, where $\bar{\beta}$ denotes the highest degree homogeneous summand of $\beta$. 


\subsection{Signature Sequences in UFDs}\label{UFD-sig}

In this section, assume that the field $k$ is algebraically closed. 

\begin{theorem}\label{fact-closed} Let $B$ be a UFD over $k$ with a positive degree function $\deg$. Assume that $\vec{h}$ is a signature sequence for $(B,\deg )$. Given $h_n\in\vec{h}$ and $b\in B$ with $\deg b<\deg h_n$, $k[h_n+b]$ is factorially closed in $B$. Consequently, $h_n+b$ is a prime element of $B$.
\end{theorem}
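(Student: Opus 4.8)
The plan is to write $p=h_n+b$ and prove directly that $k[p]$ is factorially closed in $B$, deducing the primeness of $p$ at the end. First I would dispose of the degenerate case $n=0$: here $h_0=1$, and since $\deg$ is non-negative the inequality $\deg b<\deg h_0=0$ forces $b=0$, so $k[p]=k$ is trivially factorially closed. Hence I assume $n\ge 1$. Because $\deg$ is positive we have $\mathcal{F}_0=k[h_0]=k$, so $d_n=\deg h_n>0$; thus $p$ has positive degree, and as $k$ is algebraically closed this makes $p$ transcendental over $k$, giving $k[p]\cong k^{[1]}$.

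The central observation is that every prime factor in $B$ of an element of $k[p]$ can be written down explicitly. Given a nonzero $P(p)\in k[p]$, algebraic closedness of $k$ lets me factor $P(T)=c\prod_{i=1}^m(T-\lambda_i)$ with $c\in k^*$ and $\lambda_i\in k$, so that $P(p)=c\prod_{i=1}^m(p-\lambda_i)$. Each factor is $p-\lambda_i=h_n+(b-\lambda_i)$, and since $\deg(b-\lambda_i)\le\max(\deg b,0)<d_n=\deg h_n$, Corollary \ref{irred} shows $p-\lambda_i$ is irreducible in $B$, hence prime because $B$ is a UFD. Together with $B^*=k^*$ (Lemma \ref{sparse}), this exhibits the prime factorization in $B$ of an arbitrary nonzero element of $k[p]$, every factor of which lies again in $k[p]$.

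To establish factorial closedness I would then take $\alpha,\beta\in B$ with $\alpha\beta\in k[p]\setminus\{0\}$ and write $\alpha\beta=c\prod_i(p-\lambda_i)$ as above. By unique factorization in $B$, the prime factorizations of $\alpha$ and $\beta$ partition the multiset $\{p-\lambda_i\}$ up to the unit $c$; that is, $\alpha=u\prod_{i\in S}(p-\lambda_i)$ and $\beta=u^{-1}c\prod_{i\notin S}(p-\lambda_i)$ for some index set $S$ and some $u\in B^*=k^*$. Since each $p-\lambda_i$ and each scalar lies in $k[p]$, we conclude $\alpha,\beta\in k[p]$, so $k[p]$ is factorially closed. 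The final assertion is then immediate: taking $P(p)=p$ (the case $m=1$, $\lambda_1=0$), $p=h_n+b$ is itself irreducible in $B$ by Corollary \ref{irred}, hence prime since $B$ is a UFD.

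The only place a hypothesis enters in an essential way is the step invoking algebraic closedness of $k$ to split $P(p)$ into linear factors $p-\lambda_i$ of the precise shape $h_n+(\text{lower-degree term})$ demanded by Corollary \ref{irred}; this is exactly where the standing assumption that $k$ is algebraically closed is used. Over a non-closed field the irreducible factors of $P$ need not have this form, and the explicit identification of primes breaks down. Once those primes are in hand, the remainder is purely formal bookkeeping inside the UFD $B$ together with $B^*=k^*$, so I expect no further obstacle.
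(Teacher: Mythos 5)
Your proof is correct and takes essentially the same route as the paper's: factor a nonzero element of $k[h_n+b]$ into linear factors $h_n+b-\lambda$ using the algebraic closedness of $k$ (a standing assumption in this subsection), apply \emph{Corollary\,\ref{irred}} to see that each such factor is irreducible, hence prime in the UFD $B$, and then use unique factorization together with $B^*=k^*$ to conclude. The paper's write-up is merely terser about the final bookkeeping step that you spell out via the partition of the prime factors.
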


\begin{proof} We may assume $n\ge 1$. Suppose that $uv\in k[h_n+b]$ for $u,v\in B\setminus k$, and let $h_n+b-\lambda$ be a divisor of $uv$, where $\lambda\in k$. By {\it Corollary\,\ref{irred}}, 
$h_n+b-\lambda$ is irreducible in $B$, and therefore prime in $B$. It follows that every prime factor of $u$ (respectively, $v$) is of the form $h_n+b-\lambda$ for some $\lambda\in k$. Therefore, $u\in k[h_n+b]$ (respectively, $v\in k[h_n+b]$). 
\end{proof}

Note that this result means that every term $h_i$ of the signature sequence $\vec{h}$ is prime. 

\begin{corollary}\label{two-element} Let $B$ be a UFD over $k$ with a positive degree function $\deg$. Assume that $\vec{h}$ is a signature sequence for $(B,\deg )$. Given $h_m,h_n\in\vec{h}$ with $0<m<n$, $k[h_m,h_n]\cong k^{[2]}$. 
\end{corollary}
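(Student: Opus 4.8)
The plan is to show that $h_m$ and $h_n$ are algebraically independent over $k$. Since $k$ is algebraically closed and $\deg$ is positive, any element of the domain $B$ algebraic over $k$ lies in $\mathcal{F}_0=k$; as $h_i\notin k$ for $i\ge 1$, each such $h_i$ is transcendental over $k$ and $k[h_i]\cong k^{[1]}$. By \emph{Theorem~\ref{fact-closed}} (taking $b=0$), both $h_m$ and $h_n$ are prime and both $k[h_m]$ and $k[h_n]$ are factorially closed in $B$. First I record two consequences of $m<n$: from the definition of a signature sequence, $h_n\notin k[h_1,\dots,h_{n-1}]\supseteq k[h_m]$, so $h_n\notin k[h_m]$; and since $h_m,h_n$ are non-associate primes (an associate would place $h_n$ in $k^*h_m\subset k[h_m]$), we have $h_m\nmid h_n$, so the image $\bar h_n$ of $h_n$ in the domain $B/h_mB$ is nonzero.

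Next I would argue by contradiction. If $h_m,h_n$ were algebraically dependent, choose a nonzero $R\in k[S,T]$ with $R(h_m,h_n)=0$ whose degree $d$ in $S$ is minimal; writing $R=\sum_{i=0}^{d}c_i(T)S^i$, transcendence of $h_m$ and of $h_n$ forces $R$ to involve both variables, so $d\ge 1$. Applying the quotient map $B\to B/h_mB$ (which restricts to the identity on $k$ and sends $h_m\mapsto 0$) to the relation kills every term with $i\ge 1$ and yields $c_0(\bar h_n)=0$ in $B/h_mB$.

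The decisive step is to analyze $\bar h_n$. If $\bar h_n$ is transcendental over $k$, then $c_0(\bar h_n)=0$ forces $c_0=0$, so $S\mid R$ and $R=S\,R_1$ with $R_1(h_m,h_n)=0$ of strictly smaller $S$-degree, contradicting minimality. The remaining, more delicate case is $\bar h_n$ algebraic over $k$: since $k$ is algebraically closed and $B/h_mB$ is a domain, this gives $\bar h_n=\lambda\in k$, i.e.\ $h_m\mid(h_n-\lambda)$. Here the two factorial-closedness hypotheses do the work: $h_n-\lambda$ is a nonzero element of $k[h_n]$, so factorial closedness of $k[h_n]$ puts its factor $h_m$ into $k[h_n]$; writing the nonconstant $h_m$ as $c\prod_{\ell}(h_n-\mu_\ell)$ over $k=\bar k$ and invoking factorial closedness of $k[h_m]$ on the factor $h_n-\mu_1$ of $h_m$ then forces $h_n\in k[h_m]$, contradicting $h_n\notin k[h_m]$. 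Thus no dependence relation exists and $k[h_m,h_n]\cong k^{[2]}$. The main obstacle is precisely the possible vanishing of the constant coefficient $c_0$ when $\bar h_n$ is algebraic over $k$, and it is overcome by playing the factorial closedness of $k[h_m]$ and of $k[h_n]$ against each other.
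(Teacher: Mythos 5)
Your proof is correct. The paper's own argument is a one-liner: by \emph{Theorem~\ref{fact-closed}}, $k[h_m]$ is factorially closed in $B$, hence \emph{algebraically} closed in $B$ (the standard inertness property of factorially closed subrings of a domain); since $h_n\notin k[h_m]$ by the definition of a signature sequence, $h_n$ is transcendental over $k[h_m]\cong k^{[1]}$, and the claim follows. You instead reprove the needed instance of ``factorially closed $\Rightarrow$ algebraically closed'' by hand: a minimal-$S$-degree descent that uses the primality of $h_m$ to pass to the domain $B/h_mB$, splits on whether $\bar h_n$ is transcendental over $k$, and in the delicate case $h_m\mid(h_n-\lambda)$ plays the factorial closedness of $k[h_n]$ (to get $h_m\in k[h_n]$) against that of $k[h_m]$ (to force $h_n\in k[h_m]$, contradicting the signature-sequence condition). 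Both arguments rest on the same key lemma, \emph{Theorem~\ref{fact-closed}}; yours is longer but self-contained and makes explicit where the algebraic closedness of $k$, the primality of the $h_i$, and the positivity of $\deg$ (via $B^*\subset k$, used to see that $h_m,h_n$ are non-associate) actually enter, whereas the paper's is shorter because it quotes the general fact that a factorially closed subring is algebraically closed.
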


\begin{proof} By {\it Theorem\,\ref{fact-closed}}, $k[h_m]$ is factorially closed in $B$, hence algebraically closed in $B$. Since $h_n\not\in k[h_m]$, it follows that $h_n$ is transcendental over $k[h_m]$. Since $m>0$, $k[h_m]\cong k^{[1]}$. Therefore, $k[h_m,h_n]\cong k^{[2]}$.
\end{proof}

\begin{corollary}\label{dim-1} $\mathcal{U}_k(1,{\bf G})=\mathcal{U}_k(1,{\bf D})=\Big\{ k^{[1]}\Big\}$
\end{corollary}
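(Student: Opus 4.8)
The plan is to prove the two inclusions $\{k^{[1]}\}\subseteq\mathcal{U}_k(1,\mathbf{G})\subseteq\mathcal{U}_k(1,\mathbf{D})$ and $\mathcal{U}_k(1,\mathbf{D})\subseteq\{k^{[1]}\}$. The first chain is immediate: the standard grading $\deg x=1$ makes $k^{[1]}$ a positively $\Z$-graded UFD of transcendence degree one, so $[k^{[1]}]\in\mathcal{U}_k(1,\mathbf{G})$, and the containment $\mathcal{U}_k(1,\mathbf{G})\subseteq\mathcal{U}_k(1,\mathbf{D})$ is one of those recorded in the introduction. All the work is in the reverse inclusion, so fix $B\in\mathcal{U}_k(1,\mathbf{D})$: a UFD with $k\subseteq B$, ${\rm trdeg}_kB=1$, and a positive degree function $\deg$ (recall $k$ is algebraically closed throughout this subsection).

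First I would produce a signature sequence $\vec h=\{h_0,h_1,\dots\}$ for $(B,\deg)$ having at least one nonconstant term. Since $B\neq k$, the set $\{g\in G\mid \mathcal{F}_g\not\subset k\}$ is nonempty; assuming its minimum $d_1$ is attained, any $h_1\in\mathcal{F}_{d_1}\setminus k$ gives a length-one signature sequence. The decisive structural input is \emph{Corollary\,\ref{two-element}}: if $\vec h$ had two positive terms $h_m,h_n$ with $0<m<n$, then $k[h_m,h_n]\cong k^{[2]}$, forcing ${\rm trdeg}_kB\ge 2$. Since ${\rm trdeg}_kB=1$, every signature sequence for $(B,\deg)$ therefore has length at most one, i.e. terminates at $h_1$.

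Next I would upgrade $h_1$ to a generator. By \emph{Theorem\,\ref{fact-closed}} (taking $b=0$), $k[h_1]$ is factorially closed in $B$, hence algebraically closed in $B$. Because $k$ is algebraically closed and $h_1\notin k$, the element $h_1$ is transcendental over $k$, so $k[h_1]\cong k^{[1]}$ and ${\rm frac}(k[h_1])=k(h_1)$ has transcendence degree one. Since ${\rm trdeg}_kB=1$, every element of $B$ is algebraic over $k(h_1)$, hence algebraic over $k[h_1]$; algebraic closedness then forces $B\subseteq k[h_1]$. Thus $B=k[h_1]\cong k^{[1]}$, completing the reverse inclusion and the equalities $\mathcal{U}_k(1,\mathbf{G})=\mathcal{U}_k(1,\mathbf{D})=\{k^{[1]}\}$.

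The step I expect to be the only real obstacle is the very first one: guaranteeing that a signature sequence can be started, i.e. that the minimal degree $d_1$ of a nonconstant element is actually attained in the totally ordered group $G$. For $B\in\mathcal{U}_k(1,\mathbf{G})$ this is automatic, since $\deg_{\mathfrak g}$ takes values in $\Z_{\ge 0}$, which is well ordered; this already yields $\mathcal{U}_k(1,\mathbf{G})=\{k^{[1]}\}$. For a general positive degree function I would control $d_1$ using the UFD hypothesis: positivity gives $B^*=k^*$ by \emph{Proposition\,\ref{degree}(d)}, so every nonconstant element factors into primes of strictly positive degree and $\deg(B\setminus\{0\})$ is generated as a monoid by the degrees of prime elements; the point to verify is that these prime degrees do not accumulate toward $0$, which is where I would invoke finite type (via \emph{Lemma\,\ref{finite-type}}) or a direct argument from the transcendence-degree-one structure. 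Note that, once $h_1$ exists, completeness of the signature sequence is never needed: the transcendence-degree argument above bypasses it entirely, and it is precisely this transcendence-degree input from \emph{Corollary\,\ref{two-element}} that makes dimension one special.
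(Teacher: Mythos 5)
Your proposal is correct and follows essentially the same route as the paper: produce a signature sequence of length at least one, apply \emph{Theorem\,\ref{fact-closed}} to see that $k[h_1]$ is factorially closed, hence algebraically closed, in $B$, and conclude $B=k[h_1]\cong k^{[1]}$ from ${\rm tr.deg}_kB=1$. Your extra care about whether the minimum $d_1$ is actually attained in a general totally ordered group is a point the paper passes over silently, and your appeal to \emph{Corollary\,\ref{two-element}} is harmless but unnecessary, as you yourself note.
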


\begin{proof} Let $B\in\mathcal{U}_k(1,{\bf D})$ and 
let $\deg$ be a positive degree function on $B$. Since ${\rm tr.deg}_kB=1$, there exists a signature sequence $\vec{h}$ for $(B,\deg )$ of length at least one. 
By {\it Theorem\,\ref{fact-closed}}, $k[h_1]$ is factorially closed in $B$, hence algebraically closed in $B$. Since ${\rm tr.deg}_kB=1$, this means $B= k[h_1]\cong k^{[1]}$.  We thus have:
\[
\Big\{ k^{[1]}\Big\}\subset \mathcal{U}_k(1,{\bf G})\subset \mathcal{U}_k(1,{\bf D})\subset\Big\{ k^{[1]}\Big\}
\]
\end{proof}


\section{Rational UFDs of Transcendence Degree Two}\label{Dim-2}

Let $\mathcal{B}(k)$ be the family of rings defined in (\ref{type}) above. The main goal of this section is to prove the following classification. 
\begin{theorem}\label{classify} $\mathcal{B}(k)\subset\mathcal{U}_k(2,{\bf G},{\bf R})$. If $k$ is algebraically closed, then $\mathcal{B}(k)=\mathcal{U}_k(2,{\bf G},{\bf R})$. 
\end{theorem}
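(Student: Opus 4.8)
The plan is to treat the two inclusions separately, using the integral-extension machinery of \emph{Section\,\ref{UFD}} for $\mathcal{B}(k)\subseteq\mathcal{U}_k(2,{\bf G},{\bf R})$ and the signature-sequence machinery of \emph{Section\,\ref{signature}} for the reverse inclusion when $k$ is algebraically closed.

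For the inclusion $\mathcal{B}(k)\subseteq\mathcal{U}_k(2,{\bf G},{\bf R})$, fix $B$ as in (\ref{type}) with ramification indices $a>b>c_1>\cdots>c_n\ge 2$, set $A_0=k[x,y]$ and $A_i=A_{i-1}[z_i]/(z_i^{c_i}+x^a+\lambda_iy^b)$, so $A_n=B$. Each $A_i$ is affine, hence noetherian, and a dimension count gives transcendence degree $2$. I would prove by induction on $i$ that $A_i$ is a UFD with ${\rm frac}(A_i)=k(x,y)\cong k^{(2)}$, which yields rationality (property $\mathbf{R}$), and simultaneously that $x^a+\mu y^b$ is prime in $A_i$ for every $\mu\in k^*\setminus\{\lambda_1,\dots,\lambda_i\}$. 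The base case is the irreducibility of $x^a+\mu y^b$ in $k[x,y]$ when $\gcd(a,b)=1$. For the inductive step, the identity $z_i^{c_i}=(\mu-\lambda_i)y^b$ modulo $x^a+\mu y^b$ gives $A_i/(x^a+\mu y^b)\cong(A_{i-1}/(x^a+\mu y^b))[Z]/(Z^{c_i}-(\mu-\lambda_i)y^b)$, a domain by \emph{Theorem\,\ref{Samuel}(a)}; this establishes the primality statement, and taking $\mu=\lambda_i$ (allowed since the $\lambda_j$ are distinct) shows $x^a+\lambda_iy^b$ is prime in $A_{i-1}$, so \emph{Theorem\,\ref{Samuel}(c)} promotes the UFD property from $A_{i-1}$ to $A_i$. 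The one delicate point is the coprimality hypothesis of \emph{Theorem\,\ref{Samuel}}: rather than the uniform grading of $B$ (under which $\deg F$ is divisible by $c_i$), I would grade $A_{i-1}$ by $\deg x=b\,c_1\cdots c_{i-1}$, $\deg y=a\,c_1\cdots c_{i-1}$, $\deg z_j=ab\,c_1\cdots c_{i-1}/c_j$, making every relation homogeneous while forcing $\gcd(c_i,\deg F)=\gcd(c_i,ab\,c_1\cdots c_{i-1})=1$. Finally, the uniform grading $\deg x=b\prod_jc_j$, $\deg y=a\prod_jc_j$, $\deg z_i=ab\prod_jc_j/c_i$ is a positive $\Z$-grading of $B$ (property $\mathbf{G}$), with $B_0=k$ since all generators have positive degree.

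For the reverse inclusion with $k$ algebraically closed, let $B\in\mathcal{U}_k(2,{\bf G},{\bf R})$, fix an effective positive $\Z$-grading $\mathfrak{g}$, and form a homogeneous signature sequence $\vec h=\{1,h_1,h_2,\dots\}$ for $(B,\deg_{\mathfrak g})$; this is possible because the value group is well-ordered on the positive side, so the minima defining the $d_i$ always exist. By \emph{Theorem\,\ref{fact-closed}} every $h_i$ is prime, and by \emph{Corollary\,\ref{two-element}}, $k[h_1,h_2]\cong k^{[2]}$, so $h_1,h_2$ are a transcendence basis and each later $h_i$ is algebraic over $k(h_1,h_2)$. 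Writing $d_i=\deg h_i$, I set $D={\rm lcm}(d_1,d_2)$, $a=d_2/\gcd(d_1,d_2)$, $b=d_1/\gcd(d_1,d_2)$, so $\gcd(a,b)=1$ and the degree-$D$ part of $k[h_1,h_2]$ is spanned by the pure powers $h_1^a,h_2^b$. The core of the argument is to show, for each $i\ge 3$, that the minimal exponent $c$ with $c\,d_i\in\langle d_1,d_2\rangle$ satisfies $c\,d_i=D$ and produces a relation $h_i^{c}=\alpha h_1^a+\beta h_2^b$: here I would use \emph{Proposition\,\ref{intersect}} to reduce $h_i^{c}$ into $k[h_1,h_2]$ modulo lower degree, and factoriality together with the primeness of $h_1,h_2,h_i$ (\emph{Corollary\,\ref{irred}}, \emph{Theorem\,\ref{fact-closed}}) to force $\alpha,\beta\ne 0$. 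After rescaling this gives $h_i^{c_{i-2}}=-(h_1^a+\lambda_{i-2}h_2^b)$, and primeness of the $h_i$ forces the $\lambda_{i-2}$ to be distinct.

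The main obstacle is the final bookkeeping: proving that the exponents $a,b,c_1,\dots$ are pairwise relatively prime and that $\vec h$ is finite. I expect pairwise coprimality to follow from factoriality — if a prime $p$ divided two of the indices, the corresponding relation would exhibit a nontrivial factorization contradicting that $B$ is a UFD (the multi-variable analogue of the classical factoriality criterion for Pham-Brieskorn surfaces, and precisely the obstruction controlled by the coprimality hypothesis of \emph{Theorem\,\ref{Samuel}(c)} read in reverse). Finiteness is where property $\mathbf{G}$ is essential: each relation forces $c_{i-2}\mid D$, so pairwise coprime indices $\ge 2$ all dividing the single integer $D$ must be finite in number; equivalently $\prod_j c_j\mid D$. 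This shows $\vec h$ terminates, so $B=k[\vec h]$ is affine (recovering the affineness asserted in the abstract) and realizes a presentation of the form (\ref{type}). Reordering to $a>b>c_1>\cdots>c_n$ and normalizing $\lambda_1=1$ by scaling then identifies $[B]$ with a class in $\mathcal{B}(k)$, yielding the equality $\mathcal{B}(k)=\mathcal{U}_k(2,{\bf G},{\bf R})$.
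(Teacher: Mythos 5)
Your forward inclusion $\mathcal{B}(k)\subset\mathcal{U}_k(2,{\bf G},{\bf R})$ is essentially the paper's own argument: the same induction on the subrings $R_i$, the same use of \emph{Theorem\,\ref{Samuel}}(a) to keep $x^a+\mu y^b$ prime for unused $\mu$ and of \emph{Theorem\,\ref{Samuel}}(c) to propagate factoriality, and an equivalent device (rescaling the grading by $c_1\cdots c_{i-1}$ versus the paper's $c_{i+1}\mathfrak{g}$) to secure the coprimality hypothesis $\gcd(c,\omega)=1$. That half is fine.

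The reverse inclusion has a genuine gap at its central step. You assert that for each $i\ge 3$ there is a minimal $c$ with $c\,d_i=\mathrm{lcm}(d_1,d_2)$ and a relation $h_i^{c}=\alpha h_1^a+\beta h_2^b$, to be obtained by ``using \emph{Proposition\,\ref{intersect}} to reduce $h_i^c$ into $k[h_1,h_2]$ modulo lower degree.'' But \emph{Proposition\,\ref{intersect}} only controls elements of degree $<d_n$ (part (a)) or of degree $\le d_n$ (part (b)) for a signature sequence of length at least $n$; the element $h_i^{c}$ has degree $c\,d_i\ge 2d_i> d_i$, so neither part applies, and nothing in the signature-sequence formalism alone forces any power of $h_i$ to lie in $k[h_1,h_2]$ plus lower-degree terms. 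Tellingly, your argument for this direction never uses the hypothesis ${\bf R}$, whereas this is exactly where the paper needs it: the relations come from the subfield $K_0=\{u/v\mid u,v\in B_j\}$ of degree-zero homogeneous fractions, which has transcendence degree one and, by L\"uroth's Theorem applied to ${\rm frac}(B)\cong k^{(2)}$, equals $k(\zeta)$ for a single generator; a gcd analysis with linear forms then shows $K_0=k(h_1^a/h_2^b)$, and for any homogeneous prime $\xi$ the membership $\xi^{d_1}/h_1^{\deg\xi}\in K_0$ is what yields $\xi^e=N(h_1^a,h_2^b)$. Without this input there is no reason for the degree-$D$ relation to exist, and the rest of your bookkeeping (that $\gcd(e,ab)=1$, that the $c_i$ are pairwise coprime --- which in the paper is a degree comparison, not a ``nontrivial factorization'' argument --- and the finiteness of $\vec h$) has nothing to stand on. To repair the proof you would need to reinstate the $K_0$/L\"uroth step or an equivalent use of rationality.
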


\begin{proof} Assume that $k$ is algebraically closed and $B\in\mathcal{U}_k(2,{\bf G},{\bf R})$. Let $\mathfrak{g}$ be a positive $\Z$-grading of $B$, given by $B=\oplus_{i\in\N}B_i$. 
If $B\cong k^{[2]}$, then $B\in\mathcal{B}(k)$. So assume that $B\not\cong k^{[2]}$. 

Since ${\rm tr.deg}_kB=2$ and $B\not\cong k^{[2]}$, there exists a homogeneous signature sequence $\vec{h}$ of $(B,\deg_{\mathfrak{g}})$ of length at least three. 
Let $f=h_1$ and  $g=h_2$. 
Set $K={\rm frac}(B)\cong k^{(2)}$ and let $K_0\subset K$ be the subfield: 
\[
K_0=\{ u/v\in K \,\vert\, u,v\in B_i , v\ne 0, i\in\N\}
\]
By \cite{Freudenburg.17}, Proposition 1.1(c), $K_0$ is algebraically closed in $K$. Since $k\ne K_0$ and $K_0\ne K$, we conclude that ${\rm tr.deg}_kK_0=1$. 
Since ${\rm frac}(B)=k^{(2)}$, L\"uroth's Theorem\index{L\"uroth's Theorem} implies that $K_0=k(\zeta )=k^{(1)}$ for some $\zeta\in K_0$. Let $\zeta =u/v$ for $u,v\in B_t$ with $\gcd (u,v)=1$ for positive $t\in\Z$.

Let $a\ge b\in \N$ be such that $d_1=\deg f=bd$ and $d_2=\deg g=ad$ for $d=\gcd (d_1,d_2)$ and $\gcd (a,b)=1$.
Then there exist standard homogeneous $F,G\in k[X,Y]=k^{[2]}$ of the same degree $r$ such that:
\[
\gcd (F(X,Y),G(X,Y))=1 \quad {\rm and}\quad f^aG(u,v)=g^bF(u,v)
\]
Let $L_i,M_j\in k[X,Y]$, $1\le i,j\le r$, be linear forms such that $F=L_1\cdots L_r$ and $G=M_1\cdots M_r$. 
If $\lambda\in B$ is prime and $F(u,v),G(u,v)\in\lambda B$, then $L_i(u,v),M_j(u,v)\in\lambda B$ for some $i,j$. 
Since $L_i$ and $M_j$ are linearly independent, $u,v\in\lambda B$, which implies $\lambda\in k^*$. 
Therefore, $\gcd_B(F(u,v),G(u,v))=1$. It follows that $f$ is the only prime divisor of $L_1(u,v)$ and $g$ is the only prime divisor of $M_1(u,v)$. If 
$L_1(u,v)=f^{a_1}$ and $M_1(u,v)=g^{b_1}$ for $a_1\le a$ and $b_1\le b$, then $\deg f^{a_1}=\deg g^{b_1}$ implies $a_1=a$ and $b_1=b$. 
Since $(L_1,M_1)\in GL_2(k)$, it follows that $K_0=k(f^a/g^b)$.

Let $\xi\in B$ be homogeneous and irreducible, where $\xi\in B_l$ for positive $l\in\Z$. Assume $\xi B\ne fB$ and $\xi B\ne gB$. We have $\xi^{d_1}/f^l\in K_0$. Reasoning as above, we conclude that there exists a linear form $N\in k[X,Y]$ and positive $e\in\Z$ such that:
\begin{equation}\label{form} 
\xi^e=N(f^a,g^b) 
\end{equation}
Moreover, $\gcd (e,ab)=1$: Assume that $s_0\in\Z$ is a prime dividing $e$ and $ab$. Since $\gcd (a,b)=1$, 
either $a\in s_0\Z$ or $b\in s_0\Z$. Suppose that $e=s_0e_0$ and $a=s_0a_0$ for integers $e_0,a_0$. Furthermore, set $s_1=\gcd (e_0, a_0)$, $e_0 = s_1e_1$ and $a_0=s_1a_1$ for integers $e_1,a_1$. Then $e=se_1$ and $a=sa_1$, where $s = s_0s_1$. Since $\gcd (e_1,a_1)=1$, the equation above then yields $g^{b_1}=\lambda \xi^{e_1}+\mu f^{a_1}$ for $b_1 \leq b$  and $\lambda ,\mu\in k^*$. Since $\deg g^{b_1}=\deg f^{a_1}$, we must have $a_1=a$ and $b_1=b$. But then $s=1$, which is impossible. Therefore, no such prime $s_0$ exists. 

Suppose that $a=b$. Then $a=b=1$ and $d_1=d_2=d$. By equation (\ref{form}), $e\deg\xi=d$ for all homogeneous primes $\xi\in B$. If $e>1$, this implies $\deg\xi <d=d_1$, a contradiction. So $e=1$ and $\xi\in k[f,g]$. But then $B=k[f,g]$, also a contradiction. Therefore, $a>b$. 

According to equation (\ref{form}), there exist integers $c_n\ge 2$ such that $h_n^{c_n}\in\langle f^a,g^b\rangle$. 
By {\it Theorem\,\ref{fact-closed}}, $h_n$ is prime for each $n\ge 1$. Therefore, $\deg h_n$ divides $abd$ for all $n\ge 3$, which implies that 
the sequence of degrees $d_n$ is bounded. 

Suppose, for some pair $m\ne n$, that $c_m=tp$ and $c_n=tq$ for $t = \gcd (c_m,c_n)$ and nonzero $p,q\in\N$. Being powers of distinct primes, we see that $h_m^{c_m}$ and $h_n^{c_n}$ are $k$-linearly independent. 
Therefore, from equation (\ref{form}) it follows that 
\[
f^a,g^b\in\langle h_m^{c_m} ,h_n^{c_n}\rangle =\langle h_m^{tp},h_n^{tq}\rangle \implies f^{a'},g^{b'}\in\langle h_m^p ,h_n^q\rangle
\]
for some $a' \leq a$ and $b' \leq b$. But then:
\[
a'\deg f=b'\deg g \implies a'=a\,\, {\rm and}\,\, b'=b \implies t=1
\]
Therefore, $\gcd (c_m,c_n)=1$ for all pairs $m\ne n$. In particular, this means $d_{n+1}\ne d_n$ for all $n\ge 0$.
So $d_n$ is a strictly increasing sequence, and $c_n$, $n\ge 3$, is a strictly decreasing sequence. 
Since $d_n$ is also bounded, we conclude $d_n$ is finite. Consequently, $(B,\deg_{\mathfrak{g}})$ admits a finite complete homogeneous signature sequence $\vec{h}$, and if the length of $\vec{h}$ is $n$, then
$B=k[f,g,h_3,\hdots ,h_n]$. 

By re-scaling equations from (\ref{form}), we may assume that $f^a+\kappa_iy^b+h_i^{c_i}=0$, where $\kappa_i\in k^*$, $3\le i\le n$. Replacing $y$ with $\kappa_3^{1/b}y$, we may assume $\kappa_3=1$. By linear independence, we see that $\kappa_i\ne \kappa_j$ if $i\ne j$. 
We may thus write
\[
B\cong_kk[x,y,z_3,\hdots ,z_n]/(x^a+\kappa_iy^b+z_i^{c_i})_{3\le i\le n} \quad (\kappa_i\in k^*, \kappa_i\ne\kappa_j  \, {\rm if} \, i\ne j)
\]
where $a>b>c_3>\cdots >c_n\ge 2$ are pairwise relatively prime integers. 
Therefore, $B\in\mathcal{B}(k)$. 

Conversely, for any field $k$, suppose that $B\in\mathcal{B}(k)$ has the form (\ref{type}), and consider subrings
$R_i=k[x,y,z_3,\hdots ,z_i]$, $2\le i\le n$. 
For $3\le i\le n$, define $F_i=x^a+\lambda_iy^b$.

We see that $R_2=k[x,y]\cong k^{[2]}$ is a rational UFD with the $\Z$-grading for which $\deg x=b$ and $\deg x=a$, and that $x^a+\mu y^b$ is irreducible and homogeneous in $R_2$ for every $\mu\in k^*$. 

Given $i$ with $2\le i\le n-1$, suppose that $R_i$ is a rational UFD with positive $\Z$-grading $\mathfrak{g}$, and that $x^a+\mu y^b$ is irreducible and homogeneous in $R_i$
for every $\mu\in k^*\setminus\{ \lambda_3,\hdots ,\lambda_i\}$. Since 
\[
R_{i+1}=R_i[z_{i+1}]=R_i[Z]/(Z^{c_{i+1}}-F_{i+1})
\]
it follows by {\it Theorem\,\ref{Samuel}} that $R_{i+1}$ is a rational UFD. 

Let $\mu\in k^*\setminus\{ \lambda_3,\hdots ,\lambda_{i+1}\}$ and consider $G:=x^a+\mu y^b\in R_{i+1}$. We have:
\[
R_{i+1}/GR_{i+1}=R[Z]/(Z^{c_{i+1}}-F_{i+1},G)=(R_i/GR_i)[Z]/(Z^{c_{i+1}}-(\lambda_{i+1}-\mu)y^b)
\]
By the inductive hypothesis, $R_i/GR_i$ is an integral domain. By {\it Theorem\,\ref{Samuel}(a)}, $R_{i+1}/GR_{i+1}$ is an integral domain. Therefore, $G$ 
is irreducible and homogeneous in $R_{i+1}$. 

Finally, we may extend the $\Z$-grading $c_{i+1}\mathfrak{g}$ on $R_i$ to a positive $\Z$-grading on $R_{i+1}$ by letting $z_{i+1}$ be homogeneous of degree equal to 
$\deg_{\mathfrak{g}}F_i$. 

It follows by induction on $i$ that $B=R_n\in\mathcal{U}_k(2,{\bf G},{\bf R})$ and that 
$x^a+\mu y^b$ is irreducible and homogeneous in $B$ for every $\mu\in k^*\setminus\{ \lambda_1,\hdots ,\lambda_n\}$. 

This completes the proof.
\end{proof}

Note that the positive $\Z$-grading on $B\in\mathcal{B}(k)$ as defined in (\ref{type}) is given by 
\begin{equation}\label{Z-grading}
\deg (x,y,z_1,\hdots ,z_n)=(N/a, N/b, N/c_1,\hdots ,N/c_n)
\end{equation}
where $N=abc_1\cdots c_n$ and $x,y,z_1,\hdots ,z_n$ are homogeneous. 

\begin{corollary} $\mathcal{U}_k(2,{\bf G},{\bf R})=\mathcal{U}_k(2,{\bf A},{\bf G},{\bf R})$
\end{corollary}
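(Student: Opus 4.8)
The plan is to read this off directly from \emph{Theorem\,\ref{classify}}, so I work throughout with $k$ algebraically closed, as in that theorem. One inclusion is purely formal: by definition $\mathcal{U}_k(2,{\bf A},{\bf G},{\bf R})=\mathcal{U}_k(2,{\bf A})\cap\mathcal{U}_k(2,{\bf G},{\bf R})$, so certainly $\mathcal{U}_k(2,{\bf A},{\bf G},{\bf R})\subset\mathcal{U}_k(2,{\bf G},{\bf R})$. The content of the corollary is entirely in the reverse inclusion, namely that the grading and rationality hypotheses already force affineness on this class.

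For that direction I would take an arbitrary $[B]\in\mathcal{U}_k(2,{\bf G},{\bf R})$ and invoke \emph{Theorem\,\ref{classify}}, which for $k$ algebraically closed gives the equality $\mathcal{U}_k(2,{\bf G},{\bf R})=\mathcal{B}(k)$. Hence $B$ is $k$-isomorphic to a ring of the form (\ref{type}),
\[
k[x,y,z_1,\hdots ,z_n]/(x^a+\lambda_iy^b+z_i^{c_i})_{0\le i\le n}
\]
which is a quotient of a polynomial ring in finitely many variables over $k$. Such a ring is manifestly finitely generated as a $k$-algebra, hence affine, so $[B]\in\mathcal{U}_k(2,{\bf A})$. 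Combining this with the membership $[B]\in\mathcal{U}_k(2,{\bf G},{\bf R})$ that we started with, we get $[B]\in\mathcal{U}_k(2,{\bf A})\cap\mathcal{U}_k(2,{\bf G},{\bf R})=\mathcal{U}_k(2,{\bf A},{\bf G},{\bf R})$. Together with the formal inclusion above, this yields the asserted equality.

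I do not expect any genuine obstacle: the whole substance of the statement has already been absorbed into the classification in \emph{Theorem\,\ref{classify}}, and the only thing left to check is the elementary observation that each member of $\mathcal{B}(k)$ is finitely generated over $k$, which is immediate from the explicit presentation (\ref{type}). The corollary is therefore best understood as a bookkeeping remark recording that, once $k$ is algebraically closed, affineness is not an independent hypothesis on rational UFDs of transcendence degree two admitting a positive $\Z$-grading, but rather a consequence of the other two conditions.
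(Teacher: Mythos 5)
Your proof is correct and matches the paper's intended argument exactly: the paper states this corollary without proof, but its introduction makes clear that it is meant to follow from \emph{Theorem\,\ref{classify}} via the observation that every member of $\mathcal{B}(k)$ is presented as a finitely generated $k$-algebra, hence affine. Your explicit restriction to $k$ algebraically closed is also the right reading, since the reverse inclusion relies on the equality $\mathcal{U}_k(2,{\bf G},{\bf R})=\mathcal{B}(k)$, which the theorem only provides in that case.
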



\begin{theorem}\label{generators}
Given $B\in\mathcal{B}(k)$ as defined in {\rm (\ref{type})},
the minimum number of generators of $B$ over $k$ is $n + 2$.  
\end{theorem}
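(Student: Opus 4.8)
The plan is to compute the minimal number of generators through the cotangent space $\mathfrak{m}/\mathfrak{m}^2$ of the irrelevant ideal $\mathfrak{m}$ of $B$. Since $B$ is generated over $k$ by the $n+2$ elements $x,y,z_1,\hdots ,z_n$, the upper bound is immediate, and the entire content lies in the lower bound. For this I would exploit the positive $\Z$-grading (\ref{Z-grading}), for which $B_0=k$ and $\mathfrak{m}=\bigoplus_{d>0}B_d$ is the unique graded maximal ideal.

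First I would record the standard consequence of graded Nakayama for a positively graded, finitely generated $k$-algebra $B$ with $B_0=k$: any generating set can be normalized to lie in $\mathfrak{m}$ by subtracting the degree-zero (constant) components, and then, for a generating set $\{g_j\}\subset\mathfrak{m}$, every element of $\mathfrak{m}$ is a polynomial in the $g_j$ with no constant term, hence congruent modulo $\mathfrak{m}^2$ to its linear part $\sum_j a_jg_j$ with $a_j\in k$. Thus the residues $\bar g_j$ span $\mathfrak{m}/\mathfrak{m}^2$, and the minimal number of $k$-algebra generators of $B$ equals $\dim_k\mathfrak{m}/\mathfrak{m}^2$.

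Next I would compute $\mathfrak{m}/\mathfrak{m}^2$ from the defining presentation $B=P/J$, where $P=k[x,y,z_1,\hdots ,z_n]$ is graded by (\ref{Z-grading}), $P_+=(x,y,z_1,\hdots ,z_n)$ is its irrelevant ideal, and $J=(f_1,\hdots ,f_n)$ with $f_i=x^a+\lambda_iy^b+z_i^{c_i}$. Since $J\subset P_+$ is homogeneous we have $\mathfrak{m}=P_+/J$, whence
\[
\mathfrak{m}/\mathfrak{m}^2\cong P_+/(P_+^2+J).
\]
The decisive observation is that $J\subseteq P_+^2$: each $f_i$ is a $k$-linear combination of the monomials $x^a,\,y^b,\,z_i^{c_i}$, and because $a>b>c_1>\hdots >c_n\ge 2$, every such monomial is a product of at least two elements of $P_+$, hence lies in $P_+^2$. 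Therefore $P_+^2+J=P_+^2$ and $\mathfrak{m}/\mathfrak{m}^2\cong P_+/P_+^2$, which has the images of $x,y,z_1,\hdots ,z_n$ as a $k$-basis. Consequently $\dim_k\mathfrak{m}/\mathfrak{m}^2=n+2$, and combined with the first step this yields that the minimum number of generators of $B$ over $k$ is exactly $n+2$.

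I expect the only genuinely delicate point to be the reduction (via graded Nakayama) of the count over \emph{arbitrary} generating sets to the dimension of the graded cotangent space; once that is in place, the proof reduces to the elementary fact that the hypothesis $a,b,c_i\ge 2$ forces each relation into $P_+^2$, so that the relations contribute nothing to $\mathfrak{m}/\mathfrak{m}^2$.
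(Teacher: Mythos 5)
Your proof is correct and is essentially the paper's argument: the paper likewise bounds the number of generators below by $\dim_k(\mathfrak{m}_p/\mathfrak{m}_p^2)$ at the origin, computing this to be $n+2$ because the Jacobian of the relations vanishes there (all exponents being $\ge 2$), which is exactly your observation that $J\subseteq P_+^2$. The only cosmetic difference is that you justify the lower bound via graded Nakayama at the irrelevant ideal, whereas the paper phrases it as the tangent-space (embedding) dimension at the $k$-rational point $p=(0,\hdots,0)$.
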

\begin{proof}
By hypothesis, we have
\[
B=\C [x,y,z_1,\hdots ,z_n]/(x^a+\lambda_iy^b+z_i^{c_i})_{0\le i\le n} 
\]
where $a>b>c_1>\cdots >c_n\ge 2$ are pariwise relatively prime integers and $1=\lambda_1,\hdots ,\lambda_n\in k^*$ are distinct. 
Let $d$ be the minimum number of generators of $B$ over $k$. Then clearly $d\le n+2$. 
Set $X = {\rm Spec} (B)\subset \A^{n+2}$.  For $1 \leq i \leq n$, let $f_i = x^a + \lambda_iy^b + {z_i}^{c_i}$. Let $J$ be the Jacobian matrix of $(f_1,\cdots ,f_n)$, namely:
	\begin{center}
	  $\displaystyle J = {\Bigg (}\frac{\partial f_i}{\partial x}, \frac{\partial f_i}{\partial y}, \frac{\partial f_i}{\partial z_j}{\Bigg )_{1 \leq i,\:j \leq n}}$
	\end{center} 
Then $J$ is of dimension $(n + 2) \times n$. For a closed point $p \in X$, we denote $J(p)$ by the Jacobian matrix at $p$, that is:
\begin{center}
	  $\displaystyle J(p) = {\Bigg (}\frac{\partial f_i}{\partial x}(p), \frac{\partial f_i}{\partial y}(p), \frac{\partial f_i}{\partial x_j}(p) {\Bigg )_{1 \leq i,\:j \leq n}}$
	\end{center} 
Let $\mathfrak{m}_p$ be a maximal ideal of $B$ corresponding to the origin $p = (0, \ldots, 0) \in X$. Since $a>b>c_i\ge 2$ for each $i$, we see that ${\rm rank} (J(p)) = 0$, and we have: 
	\begin{center}
	  $\dim_k (\mathfrak{m}_p / {\mathfrak{m}_p}^2) = (n + 2) + {\rm rank} (J(p)) = n + 2$
	\end{center}
Therefore, the dimension of the tangent space at the origin $p$ is $n + 2$, which implies $d \geq n + 2$. 
\end{proof}

\begin{theorem}\label{stable} Assume that the characteristic of $k$ equals 0. Given $B\in\mathcal{B}(k)$, let $R$ be a UFD such that $B\subset R$ and $B$ is factorially closed in $R$. 
If $B\not\cong k^{[2]}$ and $B\not\cong k[x,y,z]/(x^5+y^3+z^2)$, then $B\subset ML(R)$. In particular, $B$ is rigid (respectively, stably rigid) in these cases. 
\end{theorem}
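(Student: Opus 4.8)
The plan is to show that the rings $B \in \mathcal{B}(k)$ are rigid by proving that every locally nilpotent derivation (equivalently, every $\G_a$-action) on any UFD $R$ containing $B$ factorially closed must kill $B$. The natural strategy is to exploit the positive $\Z$-grading \eqref{Z-grading} on $B$ together with the homogeneous defining equations $x^a + \lambda_i y^b + z_i^{c_i} = 0$, which are precisely Pham--Brieskorn-type relations. First I would recall the standard fact that if $B$ is factorially closed in $R$, then any locally nilpotent derivation $D$ of $R$ restricts to a locally nilpotent derivation of $B$ (since $\ker D$ is factorially closed in $R$, and the prime elements of $B$ — which by \emph{Theorem\,\ref{fact-closed}} include all the signature-sequence generators $x, y, z_1, \ldots, z_n$ — interact well with factorial closure). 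Thus it suffices to show $ML(B) = B$, i.e., that $B$ itself is rigid, and then the factorial-closure hypothesis propagates rigidity up to $R$ via the inclusion $ML(R) \cap B \supseteq \ker(D|_B) = B$.

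The heart of the matter is therefore to prove that $B$ admits no nonzero locally nilpotent derivation. Here I would invoke the known theory of Pham--Brieskorn and weighted-homogeneous surface singularities: a locally nilpotent derivation on a positively $\Z$-graded affine domain can be taken homogeneous (one passes to the associated graded, or uses that the highest-degree part of a LND is again an LND), and a homogeneous LND on a graded UFD must send each prime generator either to zero or to something of controlled degree. The key numerical input is the relation among the weights $\deg(x,y,z_i) = (N/a, N/b, N/c_i)$ with $a > b > c_1 > \cdots > c_n \geq 2$ pairwise coprime. The \emph{exceptional cases} in the statement — $B \cong k^{[2]}$ and $B \cong k[x,y,z]/(x^5+y^3+z^2)$ — are exactly the cases where a Pham--Brieskorn surface is \emph{not} rigid: the ring $k^{[2]}$ obviously carries the translation LNDs, and $(5,3,2)$ is the unique Pham--Brieskorn triple whose weight inequality $\tfrac{1}{a}+\tfrac{1}{b}+\tfrac{1}{c} > 1$ (the ``log del Pezzo'' / quasi-homogeneous rational double point $E_8$ case) permits a nontrivial LND. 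So I expect the proof to reduce to a case analysis on whether $\tfrac{1}{a}+\tfrac{1}{b}+\tfrac{1}{c_i} \geq 1$ for the relevant exponents.

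Concretely, the steps I would carry out are: (i) reduce to the rigidity of $B$ using factorial closure; (ii) reduce to homogeneous LNDs $D$ via the grading; (iii) examine the action of $D$ on the prime generators, using \emph{Corollary\,\ref{two-element}} (that any two signature terms generate a polynomial ring $k^{[2]}$) and \emph{Theorem\,\ref{fact-closed}} (that $\ker D$, being factorially closed, must contain or avoid each generator cleanly); and (iv) derive a numerical contradiction from the coprimality and ordering $a>b>c_1>\cdots>c_n\geq 2$ unless we are in one of the two excluded cases. For $n \geq 2$ the presence of three or more pairwise-coprime exponents forces the weight constraints to be incompatible with any nonzero homogeneous LND; for $n = 1$ one is reduced to a single Pham--Brieskorn surface $x^a + y^b + z^c = 0$, and I would cite (or reprove via the degree bound) the classical result that such a surface is rigid precisely when $\tfrac1a+\tfrac1b+\tfrac1c \leq 1$, the sole factorial exception with strict inequality being $(a,b,c)=(5,3,2)$.

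The main obstacle will be step (iv): controlling a homogeneous LND on the surface $x^a+y^b+z^c=0$ when $n=1$. The difficulty is that one cannot simply read off $D(x), D(y), D(z)$ from the relation; rather one must use that a homogeneous LND $D$ has a well-defined degree $\deg D$, that the three images $D(x), D(y), D(z)$ must be compatible with the single homogeneous relation (so $a\,x^{a-1}D(x) + b\,y^{b-1}D(y) + c\,z^{c-1}D(z) = 0$), and that $\ker D$ is a factorially closed $k^{[1]}$ inside $B$. Matching degrees in this last relation against the weights $N/a, N/b, N/c$ and the coprimality constraints is exactly where the inequality $\tfrac1a+\tfrac1b+\tfrac1c \leq 1$ enters: it is what forbids $\deg D$ from being nonnegative unless $(a,b,c)=(5,3,2)$. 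I would expect to handle this either by a direct weight computation or by citing the established classification of LNDs on Pham--Brieskorn surfaces.
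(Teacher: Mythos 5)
Your outline has a genuine gap at step (i), and it is the step on which everything else rests. The ``standard fact'' you invoke --- that if $B$ is factorially closed in $R$ then every locally nilpotent derivation of $R$ restricts to a locally nilpotent derivation of $B$ --- is false. For example, $k[x+y^2]$ is factorially closed in $k[x,y]$ (it is the kernel of the locally nilpotent derivation $2y\partial_x-\partial_y$), yet $\partial_y$ does not map it into itself. Consequently, proving $ML(B)=B$ does not yield the asserted conclusion $B\subset ML(R)$: rigidity of $B$ is a strictly weaker statement, and your chain $ML(R)\cap B\supseteq\ker(D|_B)=B$ has no valid starting point because $D|_B$ need not be a derivation of $B$. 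The paper itself underscores that this gap is real: \emph{Remark\,\ref{Klein}} notes that $k[x,y,z]/(x^5+y^3+z^2)$ \emph{is} rigid, yet it is not known whether it satisfies the stronger property of \emph{Theorem\,\ref{stable}} --- so ``rigid'' cannot be upgraded to ``$B\subset ML(R)$ for every factorially closed UFD overring'' by any soft argument.

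The paper's actual proof works directly in $R$ and is much shorter than your steps (ii)--(iv). Since $x,y,z_1,\hdots,z_n$ form a complete signature sequence, \emph{Theorem\,\ref{fact-closed}} makes them primes of $B$, and factorial closure of $B$ in $R$ makes them primes of $R$. The relation $x^a+\lambda_iy^b+z_i^{c_i}=0$ then holds in $R$ among pairwise relatively prime elements, and the ABC Theorem (\cite{Freudenburg.17}, Thm.\,2.48) applied to an arbitrary $D\in\operatorname{LND}(R)$ forces $x,y,z_i\in\ker D$ whenever $a^{-1}+b^{-1}+c_i^{-1}\le 1$, i.e.\ whenever $(a,b,c_i)\ne(5,3,2)$; finally $B\subset ML(R)$ because $B$ is algebraic over $k[x,y,z_i]$ and $ML(R)$ is algebraically closed in $R$. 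Your numerical analysis of the exceptional triple $(5,3,2)$ and of the case $n=0$ is correct, and your proposed weight computation on homogeneous derivations is essentially a reproof of the ABC Theorem in the special case $R=B$ --- but even carried out successfully it would only establish rigidity of $B$, not the theorem as stated. To repair the argument you must run the degree/coprimality analysis for derivations of $R$, not of $B$, which is exactly what the ABC Theorem packages.
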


\begin{proof} Assume that
\[
B=\C [x,y,z_1,\hdots ,z_n]/(x^a+\lambda_iy^b+z_i^{c_i})_{0\le i\le n} 
\]
where $a>b>c_1>\cdots >c_n\ge 2$ are pariwise relatively prime integers and $1=\lambda_1,\hdots ,\lambda_n\in k^*$ are distinct. 
We may assume $n\ge 1$, since otherwise $B=k[x,y]$.
From the proof of {\it Theorem\,\ref{classify}}, we see that $x,y,z_1,\hdots ,z_n$ are distinct prime elements of $B$, since $x=h_1,y=h_2,z_1=h_3,\hdots ,z_n=h_{n+2}$ is a complete signature sequence in $B$. Since $B$ is factorially closed in $R$, it follows that $x,y,z_1,\hdots ,z_n$ are distinct primes in $R$. 

If $a^{-1}+b^{-1}+c_i^{-1}> 1$, then it is easy to check that $(a,b,c_i)=(5,3,2)$. By the ABC Theorem (\cite{Freudenburg.17}, Thm.\,2.48), it follows that $k[x,y,z_i]\subset ML(R)$ 
whenever $(a,b,c_i)\ne (5,3,2)$. Since $B$ is algebraic over $k[x,y,z_i]$ and $ML(R)$ is algebraically closed in $R$, it follows that $B\subset ML(R)$ if $(a,b,c_i)\ne (5,3,2)$ for all $i$.

If $(a,b,c_i)=(5,3,2)$ for some $i$, then $n=1$ and $(a,b,c_1)=(5,3,2)$.
\end{proof}

\begin{theorem}\label{non-iso} Elements of $\mathcal{B}(\C )$ are pairwise non-isomorphic as $\C$-algebras. 
\end{theorem}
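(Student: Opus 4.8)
The plan is to show that two algebras $B, B' \in \mathcal{B}(\C)$ with distinct ramification data are non-isomorphic by proving that the sequence of ramification indices $a > b > c_1 > \cdots > c_n$, together with the unit scalars $\lambda_1,\dots,\lambda_n$ (up to the allowed normalization), is an isomorphism invariant of $B$. The crucial structural input is \emph{Theorem\,\ref{classify}} and its proof: every $B \in \mathcal{B}(\C)$ carries a positive $\Z$-grading, and the signature sequence construction shows that the generators $x = h_1, y = h_2, z_1 = h_3, \dots, z_n = h_{n+2}$ form a complete homogeneous signature sequence. Since \emph{Theorem\,\ref{fact-closed}} guarantees each $h_i$ is prime and the degree data $d_i = \deg h_i$ is intrinsically determined (the $d_n$ are defined purely from the filtration of $(B, \deg_{\mathfrak g})$), the ratios of these degrees recover $a,b,c_1,\dots,c_n$ up to a common scale.

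First I would reduce to a canonical grading. Because $B \not\cong \C^{[2]}$ in the interesting cases, the $\C^*$-action coming from the grading (\ref{Z-grading}) is essentially rigid; I would invoke the rigidity established in \emph{Theorem\,\ref{stable}} (so that $B = ML(B)$, or $B \subset ML(R)$ with $R=B$) to argue that the positive $\Z$-grading on $B$ is unique up to the equivalence of $\G_m$-actions described in \emph{Section\,\ref{degree+grading}}. An isomorphism $\Phi : B \to B'$ would then carry the homogeneous signature sequence of $B$ to one of $B'$, hence carry the degree sequence $\{d_i\}$ of $B$ (up to a positive rational scaling) to that of $B'$. Since the normalized degrees $(N/a, N/b, N/c_1, \dots, N/c_n)$ determine the multiset $\{a, b, c_1, \dots, c_n\}$ and the ordering $a > b > c_1 > \cdots > c_n$ fixes each entry, we conclude $\Phi$ matches the ramification indices exactly and identifies the prime generators $x \mapsto x'$, $y \mapsto y'$, $z_i \mapsto z_i'$ up to unit scalars.

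Next I would extract the scalars $\lambda_i$. Having matched generators, the defining relations $x^a + \lambda_i y^b + z_i^{c_i} = 0$ must be carried to the relations of $B'$. Substituting $x \mapsto \alpha x'$, $y \mapsto \beta y'$, $z_i \mapsto \gamma_i z_i'$ (with $\alpha,\beta,\gamma_i \in \C^*$ forced by homogeneity and primality) into the relation and comparing with $x'^a + \lambda_i' y'^b + z_i'^{c_i} = 0$ yields $\alpha^a = \gamma_i^{c_i}$ and $\lambda_i \beta^b = \lambda_i' \gamma_i^{c_i}$ for each $i$. The normalization $\lambda_1 = 1$ (which is part of the definition of $\mathcal{B}(\C)$) then pins down the common scaling, and the remaining equations force $\lambda_i = \lambda_i'$ for all $i$. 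Thus distinct data $(\lambda_1,\dots,\lambda_n)$ or distinct index sequences produce non-isomorphic algebras.

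The main obstacle will be the uniqueness of the grading: a priori an isomorphism $\Phi : B \to B'$ need not respect the given $\Z$-gradings, so I cannot simply compare homogeneous components. The resolution is exactly the rigidity statement of \emph{Theorem\,\ref{stable}}, which shows (outside the excluded cases $\C^{[2]}$ and $\C[x,y,z]/(x^5+y^3+z^2)$) that $B$ has an essentially unique $\G_m$-action, so any isomorphism is forced to be graded up to the two effective members $\rho(\pm\mathfrak g)$ of the equivalence class. The excluded case $(5,3,2)$ would need to be handled separately, but since it gives a single isomorphism class it poses no threat to pairwise non-isomorphism; and the case $\C^{[2]}$ is the unique $n=0$ member, distinguished from all others by \emph{Theorem\,\ref{generators}} via its minimal number of generators.
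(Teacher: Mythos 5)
Your outline is essentially the paper's argument: separate members by the number of generators (\emph{Theorem\,\ref{generators}}), use rigidity to force any isomorphism to respect the gradings, match the signature sequences and hence the ramification indices, and then compare the defining relations, using the normalization $\lambda_1=1$ to kill the residual scaling freedom and conclude $\lambda_i=\lambda_i'$. Two points need repair, though. First, rigidity by itself does not give uniqueness of the positive $\Z$-grading; the mechanism is \emph{Theorem\,\ref{FZ}} (Flenner--Zaidenberg), for which rigidity of the normal affine surface is merely the hypothesis. You gesture at ``the equivalence of $\G_m$-actions described in \emph{Section\,\ref{degree+grading}}'' but attribute the uniqueness to \emph{Theorem\,\ref{stable}}; the correct chain is: \emph{Theorem\,\ref{stable}} gives rigidity, \emph{Theorem\,\ref{FZ}} then gives that all $\C^*$-actions are equivalent, so $\varphi(\mathfrak{g}')=\pm\mathfrak{g}$, and after composing with an involution one may take $\varphi(\mathfrak{g}')=\mathfrak{g}$.

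Second, your dismissal of the excluded case $(a,b,c_1)=(5,3,2)$ does not work as stated. The danger is not that this ring might fail to be a single isomorphism class with itself; it is that, without knowing this particular ring is rigid, you cannot apply \emph{Theorem\,\ref{FZ}} to an alleged isomorphism between it and some \emph{other} member of $\mathcal{B}(\C)$ with $n=1$ (e.g.\ $x^7+y^3+z^2$), since nothing then forces such an isomorphism to be graded. \emph{Theorem\,\ref{generators}} cannot distinguish two members with the same $n$. The paper closes this gap via \emph{Remark\,\ref{Klein}}: the ring $\C[x,y,z]/(x^5+y^3+z^2)$ is known to be rigid by an independent result (\cite{Freudenburg.17}, Thm.\,9.7), even though the stronger stable-rigidity statement of \emph{Theorem\,\ref{stable}} is not available for it. With that citation in place your argument goes through uniformly.
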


\begin{proof} Let $B,B'\in\mathcal{B}(\C )$ be defined as in (\ref{type}):
\[
B=\C [x,y,z_1,\hdots ,z_n]/(x^a+\lambda_iy^b+z_i^{c_i})_{0\le i\le n} 
\]
where $a>b>c_1>\cdots >c_n\ge 2$ are pariwise relatively prime integers and $1=\lambda_1,\hdots ,\lambda_n\in\C^*$ are distinct; and 
\[
B'=\C [x',y',z_1^{\prime},\hdots ,z_m^{\prime}]/(x'^{a'}+\lambda_i^{\prime}y'^{b'}+(z_i^{\prime})^{c_i^{\prime}})_{0\le i\le m}
\]
where $a'>b'>c_1^{\prime}>\cdots >c_m^{\prime}\ge 2$ are pairwise relatively prime integers and $1=\lambda_1^{\prime},\hdots ,\lambda_m^{\prime}\in\C^*$ are distinct. 
By {\it Theorem\,\ref{generators}}, we must have $m=n$. If $m=n=2$, then $B,B'\cong \C^{[2]}$. So assume that $m=n\ge 3$.  
By {\it Theorem\,\ref{stable}}, the rings $B$ and $B'$ are rigid; see also {\it Remark\,\ref{Klein}} below. 

Assume that $\varphi :B'\to B$ is a $\C$-algebra isomorphism. Let $\mathfrak{g}$ and $\mathfrak{g}^{\prime}$ be the positive $\Z$-gradings of $B$ and $B'$, respectively, as given in (\ref{Z-grading}). In addition, let $\varphi (\mathfrak{g}')$ be the $\Z$-grading of $B$ induced by $\varphi$.
According to {\it Theorem\,\ref{FZ}}, there exists $\xi\in\Z$ such that $\varphi (\mathfrak{g}')=\xi\,\mathfrak{g}$. Since $\varphi$ is surjective, we see that 
$\xi=\pm 1$. If $\xi=-1$, we may 
compose $\varphi$ with an involution $\alpha$ of $B$ so that $\alpha\varphi (\mathfrak{g}')=\mathfrak{g}$. So we may assume that $\xi =1$, and $\varphi (\mathfrak{g}')=\mathfrak{g}$. 

From the proof of {\it Theorem\,\ref{classify}}, we have that $\vec{z}=\{ x,y,z_1,\hdots ,z_n\}$ is a homogeneous signature sequence for $(B,\mathfrak{g})$, and that 
$\vec{z'}=\{ x',y',z_1^{\prime},\hdots ,z_n^{\prime}\}$ is a homogeneous signature sequence for $(B',\mathfrak{g}^{\prime})=\varphi (B,\mathfrak{g})$. 
Therefore,
$\vec{h}:=\varphi \left(\vec{z'}\right)$ is also a homogeneous signature sequence for $(B,\mathfrak{g})$.  Write $\vec{h}=\{ f,g,h_3,\hdots , h_{n+2}\}$, where:
\[
\varphi (x')=f \,\, ,\,\, \varphi (y')=g\,\, ,\,\, \varphi (z_i^{\prime})=h_{i+2}\quad (1\le i\le n)
\]
Let $d_1=\deg x$, $d_2=\deg y$ and $d_{i+2}=\deg z_i$ for $i\ge 1$. 
The proof of {\it Theorem\,\ref{classify}} shows that the sequence $d_i$ is strictly increasing. Therefore, 
for each $i$ with $1\le i\le n$:
\[
\dim_{\C}B_{d_i}-\dim_{\C}(B_{d_i}\cap \C [x,y,z_1,\hdots ,z_{i-1}])=1
\]
If follows that:
\begin{enumerate}
\item There exist $u,v,w_i\in \C^*$ such that $f=ux$, $g=vy$ and $h_{i+2}=w_iz_i$ for $1\le i\le n$
\item $a=a'$, $b=b'$ and $c_i=c_i^{\prime}$ for $1\le i\le n$
\end{enumerate}
From the defining equations for $B$ and $B'$ we thus obtain for $1\le i\le n$:
\[
-z_i^{c_i}=x^a+\lambda_iy^b \quad {\rm and}\quad -h_{i+2}^{c_i}=f^a+\lambda_i^{\prime}g^b
\]
Therefore:
\[
	 - z_i^{c_i} = \frac{u^{a}}{w_i^{c_i}} x^{a} + \frac{v^{b}}{w_i^{c_i}} \lambda_i'y^{b} = x^a+\lambda_iy^b
\]
Since $x^a$ and $y^b$ are linearly independent over $k$, it follows that $u^a = w_i^{c_i}$ and $w_i^{c_i}\lambda_i = v^b\lambda_i'$ for $1 \leq i \leq n$. Set $\zeta = v^b/u^a \in \C^*$. 
Then $\lambda_i/\lambda_i^{\prime} =\zeta$ for $1 \leq i \leq n$. Since $\lambda_1 = \lambda_1' = 1$, we see that $\zeta = 1$, hence $\lambda_i = \lambda_i'$ for $1 \leq i \leq n$. 

This completes the proof of the theorem.
\end{proof}


\section{Rational UFDs of transcendence degree three}\label{Dim-3}

\subsection{Certain Affine Modifications of $k^{[3]}$}
Let $k[x]=k^{[1]}$ for a field $k$, and let $f=p(x)\in k[x]\setminus\{ 0\}$. 
Define the affine $k$-algebra
\begin{equation}\label{kernel}
B_n=k[x][z_0,\hdots ,z_{n+1}]/(p(x)z_{i+1}+z_i^{a_i}+z_{i-1}^{b_i})_{0\le i\le n}
\end{equation}
where $a_1,\hdots ,a_n,b_1,\hdots ,b_n$ are positive integers such that $\gcd (a_i,b_1\cdots b_i)=1$ for each $i$. 
Using $K=k[x]$ in {\it Theorem\,\ref{threefold}}, it follows that $B_n\in\mathcal{U}_k(3,{\bf A},{\bf R})$.
\begin{proposition}
If $p(x)\not\in k$ and $a_i,b_i\ge 2$ for all $i$, then the minimum number of generators of $B_n$ over $k$ is $n + 3$.  
\end{proposition}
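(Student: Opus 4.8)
The plan is to bound the minimum number $d$ of $k$-algebra generators of $B_n$ from both sides, following the strategy of \emph{Theorem\,\ref{generators}}. The upper bound $d\le n+3$ is immediate from the presentation (\ref{kernel}): $B_n$ is generated over $k$ by the images of $x,z_0,z_1,\hdots ,z_{n+1}$, which number $n+3$. For the reverse inequality I would use the embedding dimension (Zariski tangent space) at a carefully chosen closed point, exploiting that any choice of $d$ generators exhibits $\mathrm{Spec}(B_n)$ as a closed subscheme of $\A^d$, so that the cotangent space $\mathfrak{m}/\mathfrak{m}^2$ at every rational point has dimension at most $d$.

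The heart of the argument is a Jacobian computation at a point where $p$ vanishes. Writing $f_i=p(x)z_{i+1}+z_i^{a_i}+z_{i-1}^{b_i}$ for the defining relations, the nonzero partial derivatives are
\[
\frac{\partial f_i}{\partial x}=p'(x)z_{i+1},\quad \frac{\partial f_i}{\partial z_{i+1}}=p(x),\quad \frac{\partial f_i}{\partial z_i}=a_iz_i^{a_i-1},\quad \frac{\partial f_i}{\partial z_{i-1}}=b_iz_{i-1}^{b_i-1}.
\]
I would evaluate these at $q=(x_0,0,\hdots ,0)$, where $x_0$ is a root of $p$. Since $p(x_0)=0$ the second family vanishes; since $a_i,b_i\ge 2$ the exponents $a_i-1,b_i-1$ are positive, so the third and fourth families vanish at $z_i=z_{i-1}=0$; and the first family vanishes because $z_{i+1}=0$. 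Hence the entire Jacobian $J(q)$ is the zero matrix, and since $f_i(q)=0$ shows that $q$ lies on $\mathrm{Spec}(B_n)$, the Zariski tangent space at $q$ fills the ambient $\A^{n+3}$, giving $\dim(\mathfrak{m}_q/\mathfrak{m}_q^2)=(n+3)-\mathrm{rank}\,J(q)=n+3$. Both hypotheses are used precisely here: $p\notin k$ guarantees that the root $x_0$ exists, and $a_i,b_i\ge 2$ is what removes the linear terms $z_i,z_{i-1}$.

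The step requiring the most care is the field of definition, since a root $x_0$ of the non-constant polynomial $p$ need not lie in $k$. I would handle this by passing to an algebraic closure $\bar k$: if $B_n$ is generated by $d$ elements over $k$, then $\bar B:=B_n\otimes_k\bar k$ is generated by those same $d$ elements over $\bar k$, so $\mathrm{Spec}(\bar B)$ is a closed subscheme of $\A^d_{\bar k}$ and the embedding dimension at any $\bar k$-point is at most $d$. As $\bar B$ has the same presentation over $\bar k$ with $p$ still non-constant, a root $x_0\in\bar k$ exists and the computation above applies verbatim at the $\bar k$-point $q$, yielding $n+3=\dim_{\bar k}(\mathfrak{m}_q/\mathfrak{m}_q^2)\le d$. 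Together with $d\le n+3$ this forces $d=n+3$. The only technical point to record is that the identity $\dim_{\bar k}(\mathfrak{m}_q/\mathfrak{m}_q^2)=(n+3)-\mathrm{rank}\,J(q)$ is valid for the scheme-theoretic cotangent space regardless of reducedness, via the standard conormal computation in which the image of the defining ideal in the cotangent space of $\A^{n+3}_{\bar k}$ at $q$ is spanned by the linear parts of the $f_i$.
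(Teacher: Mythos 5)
Your proposal is correct and follows essentially the same route as the paper: the upper bound from the presentation, plus a Jacobian computation showing the cotangent space at a point with $p(x)=0$ and all $z_j=0$ has dimension $n+3$, using $a_i,b_i\ge 2$ to kill the linear terms. The only (immaterial) difference is that you base-change to $\bar k$ to get a root of $p$, whereas the paper stays over $k$ by working at the maximal ideal $(q(x),z_0,\dots,z_{n+1})$ for a prime divisor $q(x)$ of $p(x)$ and computing in the residue field.
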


\begin{proof}
Let $d$ be the minimum number of generators of $B_n$ over $k$. Then clearly $d\le n+3$. 
Set $X = {\rm Spec} (B_n)\subset \A^{n+3}$. For $0 \leq i \leq n$, let $f_i = p(x)z_{i+1}+z_i^{a_i}+z_{i-1}^{b_i}$. Let $J$ be the Jacobian matrix of $(f_0,\ldots ,f_n)$, namely:
	\begin{center}
	  $\displaystyle J = {\Bigg (}\frac{\partial f_i}{\partial x}, \frac{\partial f_i}{\partial z_j}{\Bigg )_{0 \leq i \leq n,\: 0 \leq j \leq n + 1}}$
	\end{center} 
Then $J$ is of a matrix of size $(n + 3) \times (n + 1)$. For $0 \leq i \leq n$ and $0 \leq j \leq n + 1$, we have $\partial f_i/ \partial x =  p'(x)z_{i + 1}$ and: 
\[
\frac{\partial f_i}{\partial z_j} = 
	\begin{cases}
	  p(x) 		  & (j = i + 1) \\ 
	  a_iz_i^{a_i - 1}  & (j = i) \\ 
	  b_iz_{i - 1}^{b_i - 1} & (j = i + 1) \\ 
	  0 & ({\rm otherwise})
  	 \end{cases}
\]
For a maximal ideal $\mathfrak{m}$ of $B_n$, we denote $J(\mathfrak{m})$ by the Jacobian matrix at $\mathfrak{m}$, that is,
\begin{center}
	  $\displaystyle J(\mathfrak{m}) = {\Bigg (}\frac{\partial f_i}{\partial x} (\mathfrak{m}), \frac{\partial f_i}{\partial z_j} (\mathfrak{m}){\Bigg )_{0 \leq i \leq n,\: 0 \leq j \leq n + 1}}$
	\end{center} 
where for $g \in B_n$, $g(\mathfrak{m})$ means the image of $g$ in $B_n/\mathfrak{m}$. 

Take a prime divisor $q(x) \in k[x]$ of $p(x)$, which is possible since $p(x)\not\in k$. Let $\mathfrak{m}$ be the maximal ideal of $B_n$ generated by $q(x), z_0, \ldots, z_{n + 1}$. 
Since $a_i,b_i\ge 2$ for each $i$, we see that ${\rm rank} (J(\mathfrak{m})) = 0$, hence we have: 
	\begin{center}
	  $\dim_k (\mathfrak{m} / {\mathfrak{m}}^2) = (n + 3) + {\rm rank} (J(\mathfrak{m})) = n + 3$
	\end{center}
Therefore, the dimension of the tangent space at $\mathfrak{m}$ is $n + 3$, which implies $d \geq n + 3$. 
\end{proof}

The threefolds listed in (\ref{kernel}) are of interest, since some of them occur as the kernel a of locally nilpotent derivation of $k^{[4]}$ when the characteristic of $k$ is 0. For instance, Example 8.11 and Example 8.15 of \cite{Freudenburg.17} give kernels isomorphic to 
\begin{equation}\label{3-kernel}
B_1=k[x,z_0,z_1,z_2]/(x^2z_2+z_1^2+z_0^3) \,\, {\rm and}\,\, B_2=k[x,z_0,z_1,z_2,z_3]/(xz_2+z_1^2+z_0^3,xz_3+z_2^2+z_1^3)
\end{equation}
respectively. $B_1$ has two independent positive $\Z$-gradings $\mathfrak{g}_1$ and $\mathfrak{g}_2$, where $x,z_0,z_1,z_2$ are homogeneous with:
\[
\deg_{\mathfrak{g}_1}(x,z_0,z_1,z_2)=(1,2,3,4) \,\, {\rm and}\,\, \deg_{\mathfrak{g}_2}(x,z_0,z_1,z_2)=(2,2,3,2)
\]
$B_2$ has positive $\Z$-grading $\mathfrak{h}$, where $x,z_0,z_1,z_2,z_3$ are homogeneous with:
\[
\deg_{\mathfrak{h}}(x,z_0,z_1,z_2,z_3)=(3,4,6,9,15) 
\]
For $n\ge 3$, it is easy to show that $B_n$ admits no positive $\Z$-grading for which $x,z_0,\hdots ,z_n$ are homogeneous.

\subsection{The Russell Cubic Threefold}
The Russell cubic threefold over $k$ is $X={\rm Spec}(B)$, where:
\[
B=k[x,y,z,t]/(x+x^2y+z^2+t^3) \in \mathcal{U}_k(3,{\bf A},{\bf R})
\]
$X$ is smooth and admits the hyperbolic $\G_m$-action $\rho (\mathfrak{g})$ induced by the $\Z$-grading 
$\mathfrak{g}$ of $B$ for which $x,y,z,t$ are homogeneous and $\deg_{\mathfrak{g}} (x,y,z,t)=(6,-6,3,2)$. 

Assume that $k=\C$. 
Dubouloz, Moser-Jauslin and Poloni describe the automorphism group $G={\rm Aut}_{\C}(B)$ in \cite{Dubouloz.Moser-Jauslin.Poloni.10} as follows.

It is known that $ML(B)=\C [x]$ and $\mathcal{D}(B)=\C [x,z,t]$. Thus, any element of $G$ restricts to both $\C [x]$ and $\C [x,z,t]\cong\C^{[3]}$. 
Define the ideal $I\subset \C [x,z,t]$ by $I=(x^2,z^2+t^3+x)$, and define the group:
\[
K=\{ \alpha\in {\rm Aut}_{\C [x]}\C [x,z,t]\,\vert\, \alpha (I)=I\, ,\, \alpha\equiv 1\, ({\rm mod}\, (x))\}
\]
Let $\varphi :\C^*\to {\rm Aut}_{\C}(K)$ be the restriction of $\rho (\mathfrak{g})$ to $\C [x,z,t]$. Then
\begin{equation}\label{decomp}
G\cong K\rtimes_{\varphi}\C^*
\end{equation}
where the isomorphism is gotten by restricting elements of $G$ to $\C [x,z,t]$. As a consequence, every automorphism of $X$ fixes the point $0\in X$ defined by the maximal ideal $(x,y,z,t)$ of $B$. 

\begin{theorem}\label{Russell} $B\not\in\mathcal{U}_{\C}(3,{\bf G})$
\end{theorem}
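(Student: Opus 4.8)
The plan is to argue by contradiction, using the description of $G={\rm Aut}_{\C}(B)$ recorded above to show that every $\G_m$-action on $X$ is hyperbolic, so that none can be elliptic. Suppose then that $B$ carries a positive $\Z$-grading $\mathfrak{g}^{\prime}$, and let $\rho^{\prime}=\rho(\mathfrak{g}^{\prime}):\G_m\to G$ be the associated elliptic $\G_m$-action. Recall that the grading $\mathfrak{g}$ with $\deg_{\mathfrak{g}}(x,y,z,t)=(6,-6,3,2)$ induces a \emph{hyperbolic} action $\rho(\mathfrak{g})$, that $\C[x]=ML(B)$ and $\C[x,z,t]=\mathcal{D}(B)$ are preserved by every automorphism, and that $G\cong K\rtimes_{\varphi}\C^*$, where the $\C^*$ factor is exactly this hyperbolic torus and $K$ consists of the automorphisms of $\C[x,z,t]$ that preserve $I=(x^2,z^2+t^3+x)$ and are congruent to the identity modulo $(x)$.

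The key step is to show that $\rho^{\prime}$ is conjugate in $G$ to a cocharacter of the $\C^*$ factor. Composing $\rho^{\prime}$ with the quotient $G\to G/K\cong\C^*$ gives a homomorphism $\G_m\to\G_m$, necessarily of the form $\lambda\mapsto\lambda^m$. If $m=0$, then $\rho^{\prime}$ takes values in $K$; but every element of $K$ is congruent to the identity modulo $(x)$, so $K$ is unipotent and contains no nontrivial torus, forcing $\rho^{\prime}$ to be trivial. This is impossible, since a positive grading with $B\ne\C$ yields a nontrivial action. Hence $m\ne 0$, and $\rho^{\prime}$ maps isomorphically onto a subtorus of $G/K$. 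Because $K$ is unipotent, the torus $\rho^{\prime}(\G_m)$ can then be conjugated by an element of $K$ into the $\C^*$ factor, a Levi-type conjugacy for the semidirect product $K\rtimes_{\varphi}\C^*$, so that $\rho^{\prime}$ becomes conjugate to $\rho(m\,\mathfrak{g})$.

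Once this is established, the proof concludes quickly. The conjugating automorphism carries the grading $\mathfrak{g}^{\prime}$ to the grading $m\,\mathfrak{g}$, whose graded pieces in positive and negative degree are both nonzero; since being hyperbolic is preserved under transport by an algebra automorphism, $\mathfrak{g}^{\prime}$ would have to be hyperbolic as well. This contradicts the assumption that $\mathfrak{g}^{\prime}$ is positive, hence elliptic, and so no such grading exists, i.e. $B\not\in\mathcal{U}_{\C}(3,{\bf G})$.

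I expect the main obstacle to be precisely the conjugacy statement in the second paragraph, since $K$ is an infinite-dimensional (ind-)unipotent group rather than a linear algebraic group: one must justify carefully both that $K$ contains no nontrivial algebraic torus and that a one-parameter torus mapping isomorphically onto $G/K$ splits back into the $\C^*$ factor up to $K$-conjugacy. The remaining ingredients are already in place, namely the semidirect decomposition $G\cong K\rtimes_{\varphi}\C^*$, the invariance of $ML(B)$ and $\mathcal{D}(B)$ under all of $G$, and the hyperbolicity of the $\C^*$ factor.
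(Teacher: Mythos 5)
Your overall strategy --- show that any $\G_m$-action on $X$ is conjugate inside $G\cong K\rtimes_{\varphi}\C^*$ to a cocharacter of the $\C^*$ factor, hence hyperbolic or trivial and therefore not elliptic --- is the same as the paper's. But the step you yourself flag as the main obstacle is a genuine gap, not a technicality. The ``Levi-type conjugacy'' you invoke (a one-parameter torus mapping isomorphically onto $G/K$ can be conjugated by an element of $K$ into the $\C^*$ factor) is a theorem for linear algebraic groups, but $K$ here is an infinite-dimensional ind-group of automorphisms, and no general splitting or conjugacy theorem of this kind is available in that setting; indeed, conjugacy of $\C^*$-subgroups of the automorphism group of an affine threefold is essentially the linearization problem, which is exactly the sort of question that is open or delicate for the Koras--Russell cubic. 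Writing the would-be splitting as a $1$-cocycle $\lambda\mapsto\kappa_{\lambda}\in K$, what you need amounts to a vanishing statement $H^1(\C^*,K)=0$, and that cannot be extracted from the mere fact that elements of $K$ are congruent to the identity modulo $(x)$.

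The paper closes precisely this gap by a concrete computation. Given a positive grading $\mathfrak{h}$ with associated action $\psi$, it first applies Lemma~\ref{Daigle} to the restriction of $\psi$ to $\mathcal{D}(B)=\C[x,z,t]\cong\C^{[3]}$ to produce $\mathfrak{h}$-homogeneous coordinates $x,Z,T$ on which $\psi$ acts diagonally with positive weights $(p,q,r)$, and observes that the coordinate change $\beta:(x,z,t)\mapsto(x,Z,T)$ lies in $K$. It then uses the explicit description of $K$ from Dubouloz, Moser-Jauslin and Poloni (every $\kappa\in K$ satisfies $\kappa(x)=x$, $\kappa(z)\equiv z$ and $\kappa(t)\equiv t$ modulo $(x,z,t)^2$) to see that $K$ acts trivially on the tangent space at the origin; hence the tangent representation of $\psi(\lambda)=\kappa_{\lambda}\varphi(\mu_{\lambda})$ equals that of $\varphi(\mu_{\lambda})$, forcing $(\lambda^p,\lambda^q,\lambda^r)=(\mu_{\lambda}^6,\mu_{\lambda}^3,\mu_{\lambda}^2)$ and therefore $\psi=\beta\,\rho(c\mathfrak{g})\,\beta^{-1}$ for some $c\in\Z$, which is hyperbolic or trivial --- the desired contradiction. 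Your subsidiary claim that $K$ contains no nontrivial torus can be justified by the same tangent-space observation, but the conjugacy itself cannot be obtained by soft structure theory for the semidirect product; you need Daigle's linearization lemma together with the explicit form of the elements of $K$.
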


\begin{proof} 
Let $\mathfrak{h}$ be a positive $\Z$-grading of $B$, and let $\psi :\C^*\to G$ be the elliptic $\C^*$-action on $B$ induced by $\mathfrak{h}$. 
Then $\psi$ restricts to $R:=\C [x,z,t]$ and is completely determined by its action on $R$. 

Note first that $\psi$ fixes $\C [x]$, and $x$ must therefore be $\mathfrak{h}$-homogeneous.  
Since $\mathfrak{h}$ is positive, {\it Lemma\,\ref{Daigle}} below implies that there exist $\mathfrak{h}$-homogeneous $Z,T\in R$ with $R=\C [x,Z,T]$. 
Define $\beta\in K$ by $\beta (x,z,t)=(x,Z,T)$, and for $\lambda\in\C^*$, suppose that $\psi (\lambda )(x,Z,T)=(\lambda^px ,\lambda^qZ ,\lambda^rT)$ for positive $p,q,r\in\Z$. 
Given $\lambda\in\C^*$, write 
\[
\psi (\lambda)=\kappa_{\lambda}\varphi (\mu_{\lambda}) \,\, ,\,\, \kappa_{\lambda}\in K\,\, ,\,\, \mu_{\lambda}\in\C^*
\]
according to the decomposition of $G$ given in (\ref{decomp}). 
By \cite{Dubouloz.Moser-Jauslin.Poloni.10}, Proposition 3.6, there exist $f,g,\alpha\in\C [z,t]$ such that :
\[
\kappa_{\lambda}(z)=z+x(\alpha(z^2+t^3))_t+x^2f \quad {\rm and}\quad 
\kappa_{\lambda}(t)=t-x(\alpha(z^2+t^3))_z+x^2g 
\]
In addition $Z=\beta (z)$ and $T=\beta (t)$ also have this form. Therefore:
\begin{eqnarray*}
\psi (\lambda )(x)&=&\kappa_{\lambda}\varphi (\lambda )(\mu_{\lambda})(x)=\kappa_{\lambda}(\mu_{\lambda}^6x)=\mu_{\lambda}^6\kappa_{\lambda}(x)=\mu_{\lambda}^6x\\
\psi (\lambda )(z)&=&\kappa_{\lambda}\varphi (\lambda )(\mu_{\lambda})(z)=\kappa_{\lambda}(\mu_{\lambda}^3z)=\mu_{\lambda}^3\kappa_{\lambda}(z)=\mu_{\lambda}^3(z+x(\alpha(z^2+t^3))_t+x^2f)\\
\psi (\lambda )(t)&=&\kappa_{\lambda}\varphi (\lambda )(\mu_{\lambda})(t)=\kappa_{\lambda}(\mu_{\lambda}^2t)=\mu_{\lambda}^2\kappa_{\lambda}(t)=\mu_{\lambda}^2(t-x(\alpha(z^2+t^3))_z+x^2g)
\end{eqnarray*}
Let $V$ be the tangent space to ${\rm Spec}(R)\cong\C^3$ at $0$, and let $\Psi$ denote the $\C^*$-action on $V$ induced by $\psi\vert_R$. 
Since $x(\alpha(z^2+t^3))_z,x(\alpha(z^2+t^3))_t\in (x,z,t)^2$, it follows that:
\[
\Phi (\lambda)(x,Z,T)=(\mu_{\lambda}^6x , \mu_{\lambda}^3z , \mu_{\lambda}^2t)=(\lambda^px ,\lambda^qz ,\lambda^rt) 
\]
Therefore, $\psi (\lambda)(x,Z,T)=(\lambda^{6c}x,\lambda^{3c}Z,\lambda^{2c}T)$ for some $c\in\Z$, which implies:
\[
\psi\vert_R=\beta\left( \rho (c\mathfrak{g})\vert_R\right)\beta^{-1}=\left(\beta\rho (c\mathfrak{g})\beta^{-1}\right)\vert_R
\implies \psi = \beta\rho (c\mathfrak{g})\beta^{-1}
\]
But this is impossible, since $\psi$ is elliptic. Therefore, $B$ admits no positive $\Z$-grading.
\end{proof}

The following result, which is due to Daigle, implies that any elliptic $\G_m$-action on $\A_k^n$ is linearizable; see \cite{Freudenburg.17}, Proposition 3.42. 
\begin{lemma}\label{Daigle} Let $k$ be a field and $R=k^{[r]}$ $(r\ge 1)$, and let $\mathfrak{g}$ be a positive $\Z$-grading of $R$. If $f_1,\hdots ,f_n\in R$ are $\mathfrak{g}$-homogeneous and 
$R=k[f_1,\hdots ,f_n]$, then there is a subset $\{ g_1,\hdots ,g_r\}$ of $\{ f_1,\hdots ,f_n\}$ such that $R=k[g_1,\hdots ,g_r]$.
\end{lemma}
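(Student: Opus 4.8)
The plan is to pass to the graded maximal (irrelevant) ideal $\mathfrak{m}=\bigoplus_{i>0}R_i$ and to argue entirely at the level of the cotangent space $\mathfrak{m}/\mathfrak{m}^2$. Since $\mathfrak{g}$ is positive we have $R_0=k$, so $R/\mathfrak{m}\cong k$ and $\mathfrak{m}$ is a maximal ideal with residue field $k$; the corresponding closed point is the unique fixed point of the $\G_m$-action. The first step is to compute $\dim_k\mathfrak{m}/\mathfrak{m}^2=r$. This is where the hypothesis $R=k^{[r]}$ enters: $R$ is regular of Krull dimension $r$, so $R_{\mathfrak{m}}$ is a regular local ring of dimension $r$, and because $R/\mathfrak{m}=k$ the module $\mathfrak{m}/\mathfrak{m}^2$ is already a $k$-vector space, hence unchanged by localization at $\mathfrak{m}$; thus $\dim_k\mathfrak{m}/\mathfrak{m}^2=\dim_k\mathfrak{m}R_{\mathfrak{m}}/\mathfrak{m}^2R_{\mathfrak{m}}=r$.

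The second step is a graded Nakayama argument giving the generation criterion: homogeneous elements $g_1,\dots,g_s\in\mathfrak{m}$ generate $R$ as a $k$-algebra if and only if their images span $\mathfrak{m}/\mathfrak{m}^2$ over $k$. The nontrivial direction I would prove by setting $A=k[g_1,\dots,g_s]$ and showing $R_d\subseteq A$ by induction on $d$, the case $d=0$ being $R_0=k$. For $d\ge 1$ and $x\in R_d$, the spanning hypothesis lets me write $x-\sum_{\deg g_i=d}c_ig_i\in(\mathfrak{m}^2)_d$ with $c_i\in k$; since $(\mathfrak{m}^2)_d$ is a sum of products $R_aR_b$ with $a+b=d$ and $a,b\ge 1$, each factor has degree strictly less than $d$ and so lies in $A$ by induction, forcing $x\in A$.

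Finally I would apply this to the given data. Discarding any $f_i\in k=R_0$ (which are redundant as generators), the remaining $f_i$ lie in $\mathfrak{m}$, and since they generate $R$ their images span $\mathfrak{m}/\mathfrak{m}^2$: any monomial in them of length $\ge 2$ lies in $\mathfrak{m}^2$, so modulo $\mathfrak{m}^2$ each element of $\mathfrak{m}$ is a $k$-linear combination of the $\bar f_i$. As these images span a space of dimension $r$, at least $r$ of the $f_i$ have positive degree, and I may select $g_1,\dots,g_r$ among them whose images form a basis of $\mathfrak{m}/\mathfrak{m}^2$. These are homogeneous and span $\mathfrak{m}/\mathfrak{m}^2$, so the criterion of the second step yields $R=k[g_1,\dots,g_r]$, as required.

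I expect the main obstacle to be the interplay of gradedness with the induction in the Nakayama step: one must track homogeneous components carefully so that the decomposition of $(\mathfrak{m}^2)_d$ only involves degrees strictly smaller than $d$. This, together with the clean identification $\dim_k\mathfrak{m}/\mathfrak{m}^2=r$ — the only place where the polynomiality (regularity) of $R$ is genuinely used — constitutes the heart of the argument; the concluding selection of a basis is routine linear algebra.
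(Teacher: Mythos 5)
Your proof is correct. Note that the paper does not actually prove this lemma: it attributes the result to Daigle and cites \cite{Freudenburg.17}, Proposition 3.42, so there is no in-text argument to compare against; your write-up supplies a complete, self-contained proof along the standard lines (graded Nakayama at the irrelevant ideal $\mathfrak{m}=\bigoplus_{i>0}R_i$, plus regularity of $k^{[r]}$ to get $\dim_k\mathfrak{m}/\mathfrak{m}^2=r$). The two points that genuinely need care are both handled: the identification $\mathfrak{m}/\mathfrak{m}^2\cong\mathfrak{m}R_{\mathfrak{m}}/\mathfrak{m}^2R_{\mathfrak{m}}$ is valid because every $u\in R\setminus\mathfrak{m}$ acts on $\mathfrak{m}/\mathfrak{m}^2$ as its nonzero scalar part, and in the inductive step the reduction to generators of degree exactly $d$ is legitimate because $\mathfrak{m}$ and $\mathfrak{m}^2$ are graded, so one may take degree-$d$ components of the relation $x=\sum_i c_ig_i+y$.
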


\subsection{Asanuma Threefolds}

Let $k$ be a field of characteristic $p>0$. In \cite{Asanuma.87}, Asanuma introduced the rational threefolds 
\[
A_m=k[x,y,z,t]/(x^my+f(z,t))
\]
where $m\ge 1$, $f(z,t)\in k[z,t]$ and $k[z,t]/(f)\cong_kk^{[1]}$ but $k[z,t]\ne k[f]^{[1]}$. Segre \cite{Segre.56} gives such non-standard line embeddings in $\A^2$, for example, defined by 
\[
f(z,t)=z^{p^e}+t+t^{sp}
\]
where $s,e\in\Z^+$ and $p^e$ and $sp$ do not divide each other;
see also the Introduction to \cite{Ganong.11}. Asanuma showed that $A_m^{[1]}\cong_kk^{[4]}$ for each $m\ge 1$. From this, it follows that 
$A_m\in\mathcal{U}_k(3,{\bf A},{\bf P},{\bf R})$ and that each threefold ${\rm Spec}(A_m)$ is smooth. 

These rings are considered by N. Gupta in \cite{Gupta.14a,Gupta.14c}, showing that, when $m\ge 2$, $ML(A_m)=k[x]$ and $\mathcal{D}(A_m)=k[x,z,t]$. So 
$A_m\not\cong_kk^{[3]}$ when $m\ge 2$, thus providing counterexamples for the cancellation problem for affine spaces in positive characteristic. It is an open problem whether $A_1\cong_kk^{[3]}$. 

\begin{theorem}\label{cancel} Let $k$ be a field and $A$ an affine $k$-domain, $n=\dim_kA$. The following conditions are equivalent. 
\begin{enumerate}
\item $A\in\mathcal{U}_k(n,{\bf G})$ and $A^{[m]}\cong_kk^{[n+m]}$ for some $m\in\N$ 
\item $A\cong_kk^{[n]}$
\end{enumerate}
\end{theorem}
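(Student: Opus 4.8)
The implication $(2)\Rightarrow(1)$ is immediate and I would dispose of it first: if $A\cong_kk^{[n]}$, then $A$ is a UFD of transcendence degree $n$ carrying the standard positive $\Z$-grading in which each variable is homogeneous of degree $1$, so $A\in\mathcal{U}_k(n,{\bf G})$, and trivially $A^{[m]}\cong_kk^{[n+m]}$ for every $m\ge 0$. All the work lies in $(1)\Rightarrow(2)$, and my plan is to combine the two hypotheses of (1) into a count of the minimal number of $k$-algebra generators of $A$.

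The first step is to extract regularity from the stable-polynomiality hypothesis. Since adjoining a polynomial variable both preserves and reflects regularity of a noetherian ring (for instance because ${\rm gl.dim}\,R[t]={\rm gl.dim}\,R+1$), and the same holds after base change to $\bar k$, the isomorphism $A^{[m]}\cong_kk^{[n+m]}$ forces $A\otimes_k\bar k\cong(A\otimes_k\bar k)$ — more precisely $(A\otimes_k\bar k)^{[m]}\cong\bar k^{[n+m]}$ — to be regular; hence $A\otimes_k\bar k$ is regular, so $A$ is smooth over $k$ and, in particular, regular. This argument is valid over an arbitrary, possibly imperfect, ground field.

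The second step is to locate a distinguished $k$-rational point using the grading. Write $A=\bigoplus_{i\ge 0}A_i$ with $A_0=k$ for the positive $\Z$-grading, and let $\mathfrak{m}=\bigoplus_{i>0}A_i$ be the irrelevant ideal; this is the maximal ideal cutting out the unique fixed point of the associated elliptic $\G_m$-action, and its residue field is $A/\mathfrak{m}=k$. Because $A$ is a positively graded finitely generated $k$-algebra, $\dim A_{\mathfrak{m}}=\dim A=n$, so by the regularity just established $A_{\mathfrak{m}}$ is a regular local ring of dimension $n$ with residue field $k$; consequently $\dim_k\mathfrak{m}/\mathfrak{m}^2=n$. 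The graded Nakayama lemma then says that the minimal number of $k$-algebra generators of the connected graded algebra $A$ equals $\dim_k\mathfrak{m}/\mathfrak{m}^2$, namely $n$. Finally, choosing generators $f_1,\hdots,f_n$ and using ${\rm tr.deg}_kA=n$, these are forced to be algebraically independent, so the surjection $k^{[n]}\to A$ sending $X_i\mapsto f_i$ has prime kernel $P$ with ${\rm tr.deg}_k(k^{[n]}/P)=n=\dim k^{[n]}$; this forces $P=(0)$ and yields $A\cong_kk^{[n]}$.

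I expect the main obstacle to be verifying the two descent and localization facts over an arbitrary field: that smoothness (equivalently, geometric regularity) descends from $A^{[m]}$ to $A$ without a perfectness hypothesis, and that the graded maximal ideal $\mathfrak{m}$ genuinely realizes the full dimension $n$ of $A$, so that the cotangent space $\mathfrak{m}/\mathfrak{m}^2$ has dimension exactly $n$ rather than something larger. Once these are pinned down, the remaining steps are the graded Nakayama count and the elementary transcendence-degree argument. It is worth noting that the UFD condition built into $\mathcal{U}_k(n,{\bf G})$ plays no role in this argument; only the positive grading and the stable-polynomiality are used.
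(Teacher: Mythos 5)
Your proof is correct, but it takes a genuinely different route from the paper's. The paper never descends regularity to $A$: instead it extends the grading $2\mathfrak{g}$ to $B=A^{[m]}\cong_kk^{[n+m]}$ by placing the new variables $x_1,\hdots ,x_m$ in degree one, applies {\it Lemma\,\ref{Daigle}} (Daigle) to the polynomial ring $B$ to extract $n+m$ homogeneous generators from the list $a_1,\hdots ,a_s,x_1,\hdots ,x_m$, observes that all the $x_i$ must occur among them because they are the only degree-one generators available, and then passes to $B/(x_1,\hdots ,x_m)\cong A$. Your argument performs the same underlying ``graded Nakayama at the vertex'' count --- which is in essence the proof of Daigle's lemma --- directly on $A$ rather than on $A^{[m]}$: regularity descends from $A^{[m]}$ to $A$, the irrelevant maximal ideal $\mathfrak{m}=A_+$ has residue field $k$ and height $n$, so $\dim_k\mathfrak{m}/\mathfrak{m}^2=n$, and $n$ homogeneous lifts of a basis generate the domain $A$ of transcendence degree $n$, forcing $A\cong_kk^{[n]}$. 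The two points you flag as potential obstacles are both fine: geometric regularity descends along the faithfully flat inclusion $A\hookrightarrow A^{[m]}$, and every maximal ideal of an affine domain over a field has height equal to $\dim A$. What your route buys is generality: as you observe, the UFD hypothesis is never used, and the hypothesis $A^{[m]}\cong_kk^{[n+m]}$ enters only through the regularity of $A$ at the vertex. You should be aware, however, that the resulting stronger statement --- a positively graded affine $k$-domain that is regular at its irrelevant maximal ideal is a graded polynomial ring --- would immediately give the smooth case of the Conjecture in {\it Section\,\ref{conjecture}} (which the paper presents as open) and a one-line proof of {\it Theorem\,\ref{Russell}}; this is consistent with Bia{\l}ynicki-Birula's linearization theorem for fix-pointed $\G_m$-actions on smooth affine varieties, but given the tension with the paper's framing you should scrutinize that consequence against the literature before relying on the extra generality rather than on the theorem as stated.
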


\begin{proof} The implication (2) implies (1) is clear. For the converse, assume that condition (1) holds, and 
let $\mathfrak{g}$ be a positive $\Z$-grading of $A$ over $k$. There exist an integer $s\ge n$ and $\mathfrak{g}$-homogeneous elements $a_1,\hdots ,a_s\in A$ such that $A=k[a_1,\hdots ,a_s]$. 

Let $B=A[x_1,\hdots ,x_m]=A^{[m]}$, and let $I\subset B$ be the ideal $I=x_1B+\cdots +x_mB$. Extend the $\Z$-grading $2\mathfrak{g}$ on $A$ to a $\Z$-grading $\mathfrak{g}'$ of $B$ by letting each $x_i$ be homogeneous of degree one, $1\le i\le m$, and let $B=\bigoplus_{i\in\N}B_i$ be the decomposition of $B$ for $\mathfrak{g}'$. Then $B_0=k$ and $B_1=kx_1\oplus\cdots\oplus kx_m$. 
 
By {\it Lemma\,\ref{Daigle}}, there exists a subset $\{ g_1,\hdots ,g_{n+m}\}$ of $\{ a_1,\hdots ,a_s,x_1,\hdots ,x_m\}$ such that $B=k[g_1,\hdots ,g_{n+m}]$. 
Since $\mathfrak{g}'$ is positive and $\deg_{\mathfrak{g}'}a_i\ge 2$ for each $i$, it follows that:
\[
\{ x_1,\hdots ,x_m\}\subset \{ g_1,\hdots ,g_{n+m}\}
\]
By re-indexing the set $\{ g_1,\hdots ,g_{n+m}\}$, we may assume that $g_i=x_i$, $1\le i\le m$. 
Therefore, $B=k[x_1,\hdots ,x_m,g_{1+m},\hdots ,g_{n+m}]$, which implies:
\[
A\cong_kB/I\cong_kk[g_{1+m},\hdots ,g_{n+m}]/I\cong_kk^{[n]}
\]
\end{proof}

\begin{corollary}\label{Asanuma} For $m\ge 2$, $A_m\not\in\mathcal{U}_k(3,{\bf G})$.
\end{corollary}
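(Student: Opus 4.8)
The statement to prove is Corollary~\ref{Asanuma}: for $m \geq 2$, the Asanuma threefold $A_m \notin \mathcal{U}_k(3,\mathbf{G})$; that is, $A_m$ admits no elliptic $\G_m$-action. Since this is stated as a corollary immediately following Theorem~\ref{cancel}, the whole point is that it should fall out of that theorem with almost no work. So the plan is to set up the hypotheses of Theorem~\ref{cancel} and then argue by contradiction.

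**The approach.** The plan is to apply Theorem~\ref{cancel} in its contrapositive form. Recall what we know about $A_m$ from the discussion preceding the corollary: Asanuma showed $A_m^{[1]} \cong_k k^{[4]}$, so $A_m^{[m']} \cong_k k^{[n+m']}$ holds (with $n = 3$ and $m' = 1$), which is exactly the cancellation hypothesis appearing in condition~(1) of Theorem~\ref{cancel}. Also recall, from the results of N.~Gupta cited in the excerpt, that when $m \geq 2$ we have $ML(A_m) = k[x]$, and in particular $A_m \not\cong_k k^{[3]}$, so condition~(2) of Theorem~\ref{cancel} \emph{fails}. The strategy is then immediate: if $A_m$ were to admit a positive $\Z$-grading, i.e.\ if $A_m \in \mathcal{U}_k(3,\mathbf{G})$, then both clauses of condition~(1) would hold, and Theorem~\ref{cancel} would force condition~(2), namely $A_m \cong_k k^{[3]}$, contradicting $A_m \not\cong_k k^{[3]}$.

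**Key steps, in order.** First I would note that $n = \dim_k A_m = 3$ and record the ambient setup so that Theorem~\ref{cancel} applies. Second, I would verify that the cancellation half of condition~(1) is satisfied: by Asanuma's result $A_m^{[1]} \cong_k k^{[4]} = k^{[3+1]}$, so taking $m' = 1$ gives the required stable isomorphism $A_m^{[m']} \cong_k k^{[n+m']}$. Third, I would invoke Gupta's computation that $ML(A_m) = k[x] \neq A_m$ for $m \geq 2$, which shows $A_m$ is not a polynomial ring, so $A_m \not\cong_k k^{[3]}$ and condition~(2) fails. Fourth, I would assume for contradiction that $A_m \in \mathcal{U}_k(3,\mathbf{G})$, so that the first half of condition~(1) also holds; then condition~(1) holds in full, and Theorem~\ref{cancel} yields condition~(2), the desired contradiction. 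Hence $A_m \notin \mathcal{U}_k(3,\mathbf{G})$.

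**Where the real content lies.** There is no genuine obstacle inside the corollary itself: it is a one-line deduction from Theorem~\ref{cancel} together with two previously established facts. All the actual difficulty has been front-loaded into Theorem~\ref{cancel} (whose proof uses Lemma~\ref{Daigle} to extract a polynomial-ring presentation from the homogeneous generators, exploiting that $\deg_{\mathfrak{g}'} a_i \geq 2$ forces the degree-one variables $x_i$ into any generating subset) and into Gupta's Makar--Limanov computation showing $A_m \not\cong_k k^{[3]}$. The only thing to be careful about is making sure the hypotheses match precisely: that the cancellation in condition~(1) of Theorem~\ref{cancel} is $A^{[m']}\cong_k k^{[n+m']}$ \emph{for some} $m'$, so that Asanuma's single-variable cancellation $A_m^{[1]}\cong_k k^{[4]}$ suffices, and that the restriction $m \geq 2$ is exactly what guarantees $A_m$ is non-polynomial. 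With those matched, the corollary follows directly.
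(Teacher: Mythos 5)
Your proposal is correct and follows exactly the route the paper intends: combine Asanuma's stable isomorphism $A_m^{[1]}\cong_k k^{[4]}$ with Gupta's result that $A_m\not\cong_k k^{[3]}$ for $m\ge 2$, and apply Theorem~\ref{cancel} in contrapositive form. The paper leaves this deduction implicit, and your write-up supplies precisely the missing one-line argument.
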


In fact, Gupta found counterexamples to cancellation in positive characteristic for every dimension $n\ge 3$; see \cite{Gupta.14b}. 
Thus, for each such counterexample $R$, we have $R\not\in\mathcal{U}_k(n,{\bf G})$. 


\section{Conclusion}\label{conjecture}

We conclude with some remarks and a conjecture.

\begin{remark}\label{Klein} {\rm The ring $B=k[x,y,z]/(x^5+y^3+z^2)$ is also rigid (see \cite{Freudenburg.17}, Thm.\,9.7), but it is not known whether it satisfies the stronger property described in 
{\it Theorem\,\ref{stable}}.}
\end{remark}

\begin{remark} {\rm If $k$ is not algebraically closed, then in general, $\mathcal{B}(k)\ne \mathcal{U}_k(2,{\bf G},{\bf R})$. 
For example, {\it Theorem\,\ref{Samuel}} shows that 
\[
\R [x,y,z]/(x^2+y^2+z^{2k+1})
\]
is a rational UFD which admits a positive $\Z$-grading for all integers $k\ge 0$.}
\end{remark}

\begin{remark} {\rm The set $\mathcal{U}_k(2,{\bf A},{\bf G},{\bf P},{\bf R})$ contains more than just $k[x,y]$. 
For example, it is well-known that the ring
\[
B=k[x,y,z]/(x^5+y^3+z^2)
\]
is the ring of invariants for an action of an icosahedral group on the plane, so $B\subset k^{[2]}$. For a specific polynomial parametrization, see \cite{Riemenschneider.77}, {\S}2.E. 
Likewise, Russell \cite{Russell.81} gives the subalgebra:
\[
B'=k[uT^b, vT^c,T]\subset k[u,v] \quad {\rm where} \quad  T=u^c+v^b\,\, ,\,\, \gcd(b,c)=1
\]
Then $B'\cong k[x,y,z]/(x^{bc+1}+y^b+z^c)$.
}
\end{remark}

\begin{remark}\label{polarized} {\rm For $R\in\mathcal{U}_k(n,{\bf A},{\bf G})$, let $\mathfrak{g}$ be a positive $\Z$-grading of $R$ given by $R=\bigoplus_{i\in\N}R_i$. 
Given $a\in\N$, define the homogeneous subalgebra:
\[
R^{(a)}=\bigoplus_{i\in\N}R_{ia}
\]
Note that $R^{(1)}=R$ and that $R^{(b)}\subset R^{(a)}$ when $a$ divides $b$. In \cite{Mori.77}, Mori shows that there exists a unique positive integer $m$ with the property:
\begin{center}
$R^{(a)}$ is a UFD if and only if $a$ divides $m$
\end{center}
This integer $m$ is called the {\bf index} of the pair $(R,\mathfrak{g})$.
}
\end{remark}

Finally, we propose the following characterization of the affine space $\A^n_k$.

\begin{quote} {\bf Conjecture.} Let $k$ be an algebraically closed field and $n\in\Z$ positive.\\
If $B\in\mathcal{U}_k(n,{\bf A},{\bf G},{\bf R})$ and $X={\rm Spec}(B)$ is smooth, then $X\cong_k\A^n_k$.
\end{quote}

The conjecture is true if $\dim_kX\le 2$: The case $\dim_kX=1$ follows by {\it Corollary\,\ref{dim-1}}, and the case $\dim_kX=2$ follows by {\it Theorem\,\ref{classify}}. 

\medskip

\noindent {\bf Acknowledgment.} The authors wish to acknowledge Daniel Daigle of the University of Ottawa for helpful comments regarding this paper, and for pointing out Eakin's lemma to us. 
The second author wishes to express his gratitude to members of the Department of Mathematics at Western Michigan University, which he visited during the fall of 2018. Much of the research for this paper was conducted during that time. 


\bigskip

\noindent \address{Department of Mathematics\\
Western Michigan University\\
Kalamazoo, Michigan 49008}\\
USA\\
\email{gene.freudenburg@wmich.edu}
\bigskip

\noindent\address{Graduate School of Science and Technology\\
Niigata University\\
8050 Ikarashininocho, Niigata 950-2181\\
Japan\\
\email{t.nagamine14@m.sc.niigata-u.ac.jp}

\end{document}